\newtheorem{theorem}{Theorem}[section]
\newtheorem{lemma}[theorem]{Lemma}
\newtheorem{proposition}[theorem]{Proposition}
\def\bb #1{ {\mathbb #1} }
\def\c #1{ {\mathcal #1} }
\def\f #1{ {\mathfrak #1} }
\def\b #1{ {\bf #1} }
\begin{document}
\title{$L$-functions associated with families of toric exponential sums}

\author{C. Douglas Haessig\footnote{Partially supported by NSF grant DMS-0901542} \and Steven Sperber}

\date{\today}
\maketitle

%%%-----------------------------------------------------------------------
\begin{abstract}
We consider arbitrary algebraic families of lower order deformations of nondegenerate toric
exponential sums over a finite field. We construct a relative polytope with the aid of which we define
a ring of coefficients consisting of $p$-adic analytic functions with polyhedral growth prescribed by the
relative polytope. Using this we compute relative cohomology for such families and calculate sharp
estimates for the relative Frobenius map. In applications one is interested in $L$-functions associated
with linear algebra operations (symmetric powers, tensor powers, exterior powers and combinations
thereof) applied to relative Frobenius. Using methods pioneered by Ax, Katz and Bombieri we prove
estimates for the degree and total degree of the associated $L$-function and $p$-divisibility of the reciprocal
zeros and poles. Similar estimates are then established for affine families and pure Archimedean weight
families (in the simplicial case).
\end{abstract}

%%%-----------------------------------------------------------------------
\section{Introduction}

Symmetric power $L$-functions and their variants have long been objects of study and have been valuable in many number theoretic applications. In the function field case, these $L$-functions are associated with families of varieties or families of exponential sums defined over a base space $S$ which is itself a variety over $\bb F_q$, the finite field of $q = p^a$ elements having characteristic $p$. For each closed point $s$ of $S$, there is a zeta function or $L$-function for the fibre over this point. This is a rational function for each such $s$, and the collection $\c A(s)$ of reciprocal zeros and poles of this function is a finite set of algebraic integers. Interesting new $L$-functions may be created by taking Euler products as follows. Let $\c A_0(s) \subset \c A(s)$ be an interesting well-chosen subset for each point $s \in |S / \bb F_q|$, the set of closed points of $S$. For example, some choices include taking the subset of $\c A(s)$ consisting of all $p$-adic units, or all elements of $\c A(s)$ having a fixed archimedean weight. Once chosen, we can form symmetric, tensor, or exterior powers  (or combinations thereof) of the elements of $\c A_0(s)$ and denote the resulting set by $\c L \c A_0(s)$. Then the Euler product we are interested in has the general form
\begin{equation}\label{E: GenL}
L(\c L \c A_0, S / \bb F_q, T) := \prod_{s \in |S / \bb F_q|} \ \ \prod_{\tau(s) \in \c L \c A_0(s)} (1 - \tau(s) T^{deg(s)})^{-1},
\end{equation}
where $deg(s) := [ \bb F_q(s): \bb F_q]$ is the degree of the point.

The $p$-adic study of symmetric power $L$-functions begins with Dwork \cite{Dwork-Heckepolynomials-1971}, whose work was itself inspired by Ihara \cite{Ihara-Hecke_polynomials} and Morita \cite{Morita-Hecke_polynomials}, who linked the Ramanujan-Petersson conjectures and the Weil conjectures. Implicit in their work were symmetric power $L$-functions for a suitable family of elliptic curves. In \cite{Dwork-Heckepolynomials-1971}, Dwork explicitly considered symmetric power $L$-functions associated with the Legendre family of elliptic curves, applying $p$-adic cohomology to obtain important information about this $L$-function. This study was continued by Adolphson \cite{Adolphson-$p$-adictheoryof-1976} to obtain congruence information. Adolphson also considered symmetric power $L$-functions for the family of elliptic curves with level three structure \cite{Adolphson-level3}. Based on Dwork's $p$-adic cohomology theory of the Bessel function \cite{Dwork-Besselfunctionsas-1974}, Robba \cite{Robba-SymmetricPowersof-1986} $p$-adically studied the symmetric power $L$-functions of the family of Kloosterman sums. A similar study was made in Haessig \cite{Haessig-$L$-functionsofsymmetric-} for cubic exponential sums.

Symmetric, tensor, and exterior power $L$-functions have also been studied using $\ell$-adic techniques going back at least to the work of Deligne. Katz \cite{Katz-Moments_monodromy_and_perversity} has studied symmetric power $L$-functions of families of elliptic curves and their monodromy behavior. In recent work, Fu and Wan have obtained very detailed information on symmetric power $L$-functions of hyper-Kloosterman sums \cite{FuWan-$L$-functionssymmetricproducts-2005} \cite{FuWan-TrivialfactorsL-functions-}. In \cite{HaesRoja-$L$-functionsofsymmetric-}, Haessig and Rojas-Le\'on studied the $k$-th symmetric power $L$-functions for a one-parameter family of exponential sums in one variable.

In another direction, Wan \cite{Wan-Dwork'sconjectureunit-1999} \cite{Wan-Higherrankcase-2000} \cite{Wan-Rankonecase-2000} proved a conjecture of Dwork's that unit root $L$-functions which come from geometry are $p$-adically meromorphic by relating the unit root $L$-function to symmetric, tensor, and exterior power $L$-functions via Adams operations and then employing a suitable $p$-adic limiting argument.

In the present work, we consider a general family of nondegenerate toric exponential sums. Let
\[
\bar G(x, t) := \bar f(x) + \bar P(x, t) \in \bb F_q[x_1^\pm, \ldots, x_n^\pm, t_1^\pm, \ldots, t_s^\pm]
\]
where $\bar f(x)$ is nondegenerate with respect to $\Delta_\infty(\bar f)$, its Newton polyhedron at $\infty$. We assume as well that the monomials in the $x$-variables in $\bar P(x, t)$ have strictly smaller (polyhedral) weight than the leading monomials in $\bar f$, and that the $dim \> \Delta_\infty(\bar f) = n$. As a consequence, for each choice $\lambda  \in \overline{\bb F}_q^{*s}$, an $\overline{\bb F}_q$-rational point of $\bb G_m^s$ where $\overline{\bb F}_q$ is an algebraic closure of $\bb F_q$, the Newton polyhedron at $\infty$ of $\bar G(x, \lambda)$ is nondegenerate with respect to $\Delta_\infty(\bar G(x, \lambda)) = \Delta_\infty(\bar f)$. Let $\bb F_q(\lambda)$ be the field of definition of $\lambda$ and $deg(\lambda) := [\bb F_q(\lambda) : \bb F_q]$ its degree. Let $Cone(\Delta)$ be the cone over $\Delta := \Delta_\infty(\bar f)$, and $M(\Delta) := \bb Z^n \cap Cone(\Delta)$. Fix an additive character $\Theta$ of $\bb F_q$, and set $\Theta_\lambda := \Theta \circ Tr_{\bb F_q(\lambda) / \bb F_q}$. Let $\bar G_\lambda(x) := \bar G(x, \lambda)$. Define the exponential sums
\[
S_r(\bar G_\lambda, \Theta, \bb G_m^n / \bb F_q(\lambda)) := \sum_{x \in (\bb F_{q^{r deg(\lambda)}}^* )^n } \Theta_\lambda \circ Tr_{\bb F_{q^{r deg(\lambda)} }/ \bb F_q(\lambda)} \bar G_\lambda(x)
\]
and the associated $L$-function
\[
L( \bar G_\lambda, \Theta, \bb G_m^n / \bb F_q(\lambda), T) := \exp \left( \sum_{r=1}^\infty S_r(\bar G_\lambda, \Theta, \bb G_m^n / \bb F_q(\lambda)) \frac{T^r}{r} \right).
\]
When there is no confusion, we will denote the above exponential sums by $S_r(\lambda)$ and the associated $L$-functions by $L( \bar G_\lambda, T)$. $L$-functions of this type have been studied in \cite{AdolpSperb-ExponentialSumsand-1989} where it was shown that $L(\bar G_\lambda, T)^{(-1)^{n+1}}$ is a polynomial of degree $N := n! \> vol \Delta_\infty(\bar f)$ whose coefficients lie in the cyclotomic field $\bb Q(\zeta_p)$ of $p$-th roots of unity, and the reciprocal zeros are algebraic integers. Using the $p$-adic absolute value normalized at the fiber over $\lambda$ by requiring $ord_\lambda(q^{deg(\lambda)}) = 1$, then the lower bound for the Newton polygon of $L(\bar G_\lambda, T)^{(-1)^{n+1}}$, calculated using $ord_\lambda$, is independent of $\lambda$ and given in \cite{AdolpSperb-ExponentialSumsand-1989}, as well as in (\ref{E: NPfibre}) below. Denef and Loeser \cite{Denef-Loeser-Weights_of_exponential_sums} have given a precise description of the distribution of archimedean weights for the reciprocal zeros of $L(\bar G_\lambda, T)^{(-1)^{n+1}}$. The results of Denef-Loeser are independent of $\lambda$ as well.

We write then for each $\lambda \in \overline{\bb F}_q^{*s}$,
\[
L(\bar G_\lambda, T)^{(-1)^{n+1}} = (1 - \pi_1(\lambda) T) \cdots (1 - \pi_N(\lambda)T).
\]
Let $\c A(\lambda) := \{ \pi_i(\lambda) \}_{i=1}^N$. To fix ideas, we present some examples of $L$-functions of the form (\ref{E: GenL}). The $k$-th tensor power $L$-function of the toric family above is the Euler product
\[
L(\c A^{\otimes k}, \bb G_m^s / \bb F_q, T) := \prod_{\lambda \in |\bb G_m^s / \bb F_q|} \prod (1 - \pi_{i_1}(\lambda) \pi_{i_2}(\lambda) \cdots \pi_{i_k}(\lambda) T^{deg(\lambda)})^{-1},
\]
where the inner product on the right runs over all $k$-tuples $(i_1, \ldots, i_k) \in S^k$ with $S := \{1, 2, \ldots, N\}$. Similarly the $k$-th symmetric power $L$-function is given by
\[
L( Sym ^k \c A, \bb G_m^s / \bb F_q, T) := \prod_{\lambda \in |\bb G_m^s / \bb F_q|} \prod (1 - \pi_1(\lambda)^{i_1} \cdots \pi_N(\lambda)^{i_N} T^{deg(\lambda)})^{-1}
\]
where the inner product runs over $N$-tuples of non-negative integers $(i_1, \ldots, i_N)$ satisfying $i_1 + \cdots  + i_N = k$. Another variant of interest focuses on the  subset $\c A_0(\lambda) \subset \c A(\lambda)$ consisting of the unique $p$-adic unit root, say $\pi_0(\lambda)$, in $\c A(\lambda)$. Then the $k$-th moment unit root $L$-function is defined by
\begin{equation}\label{E: DworkUnit}
L_{\text{unit}}( k, \bar G, \bb G_m^s / \bb F_q, T) := \prod_{\lambda \in |\bb G_m^s / \bb F_q|} (1 - \pi_0(\lambda)^k T^{deg(\lambda)})^{-1}.
\end{equation}
We may also denote by $\c W_i(\lambda)$ the subset of $\c A(\lambda)$ consisting of reciprocal zeros $\pi(\lambda)$ having archimedean weight equal to $i$, that is, $|\pi(\lambda)|_{\bb C} = q^{deg(\lambda) i / 2}$. Then, we define
\[
L( \c L \c W_i, \bb G_m^s / \bb F_q, T) := \prod_{\lambda \in |\bb G_m^s / \bb F_q|} \prod_{\tau(\lambda) \in \c L \c W_i(\lambda)} (1 - \tau(\lambda) T^{deg(\lambda)})^{-1}.
\]

We now state our main result for the full toric family $\c A := \coprod_{\lambda \in | \bb G_m^s / \bb F_q|} \c A(\lambda)$. Similar statements about other families may be found in the following sections. Section \ref{SS: Mixed} looks at a family of affine exponential sums (in fact it deals more generally with mixed toric and affine sums), Section \ref{SS: Pure} looks at a family of pure weight (in the archimedean sense), and Section \ref{SS: Unitfam} looks at a $p$-adic unit root family. Let $\c L N$ denote the cardinality of the set $\c L \c A(\lambda)$; this number is independent of the choice of $\lambda$. Let $\Gamma \subset \bb R^s$ be the relative polytope of $\bar G$, as defined as follows (see also (\ref{E: RelPolyDef})). Let $w$ be the polyhedral weight function defined by $\Delta_\infty(\bar f)$ in $\bb R^n$. Define
\begin{equation}\label{E: GammaRelPoly}
\Gamma := \text{ Convex hull in $\bb R^s$ of the points } \{0 \} \cup \left\{ \left(\frac{1}{1 - w(\mu)} \right) \gamma \in \bb Q^s \mid (\gamma, \mu) \in Supp(\bar P) \right\}.
\end{equation}
Let $\tilde s$ denote the dimension of the smallest linear subspace of $\bb R^s$ which contains $\Gamma$, and denote by $vol(\Gamma)$ the volume of $\Gamma$ in this linear subspace with respect to Haar measure normalized so that a fundamental domain of the integer lattice in the subspace has unit volume. Lastly, we define the order $| \c L| := r$ of a linear algebra operation $\c L$ as the least positive integer $r$ such that $\c L$ is a quotient of an $r$-fold tensor product.

\begin{theorem}\label{T: IntroMain}
For each linear algebra operation $\c L$, the $L$-function $L(\c L\c A, \bb G_m^s / \bb F_q, T)$ is a rational function:
\[
L(\c L \c A, \bb G_m^s / \bb F_q, T)^{(-1)^{s+1}} = \frac{ \prod_{i=1}^R (1 - \alpha_i T) }{ \prod_{j=1}^S (1 - \beta_j T)} \in \bb Q(\zeta_p)(T).
\]
Furthermore, writing this in reduced form ($\alpha_i \not= \beta_j$ for every $i$ and $j$):
\begin{enumerate}
\item[(a)] The reciprocal zeros and poles $\alpha_i$ and $\beta_j$ are algebraic integers. For each reciprocal pole $\beta_j$ there is a reciprocal zero $\alpha_{k_j}$ and a positive integer $m_j$ such that $\beta_j = q^{m_j} \alpha_{k_j}$. 
\item[(b)] The degree $R - S$ of the $L$-function as a rational function is bounded as follows. If $\tilde s < s$ then $R = S$, else if $\tilde s = s$ then
\[
0 \leq R - S \leq s! \> vol(\Gamma) \c L N.
\]
\item[(c)] The total degree $R + S$ of the $L$-functions is bounded above by
\[
R+S \leq \c L N \cdot \tilde s! \> vol(\Gamma) \cdot 2^{ \tilde s  + (1 + \frac{1}{\tilde s}) n | \c L | } (1 + 2^{1 + \frac{1}{\tilde s}})^s.
\]
\end{enumerate}
\end{theorem}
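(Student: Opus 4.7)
The plan is to leverage the relative $p$-adic cohomology machinery advertised in the abstract: construct a complex of Banach modules over a ring of analytic functions on the base $\bb G_m^s$ whose sections have polyhedral growth prescribed by $\Gamma$, show that the fiber at each $\lambda$ computes the classical Dwork-Adolphson-Sperber cohomology for $\bar G_\lambda$, and read off the $L$-function as the characteristic polynomial of a relative Frobenius $F$. By the nondegeneracy hypothesis, only the top relative cohomology contributes, and it has rank exactly $N = n!\,vol\,\Delta_\infty(\bar f)$ as a module over the base ring, so that the fiberwise characteristic polynomial of $F$ recovers $L(\bar G_\lambda, T)^{(-1)^{n+1}}$.

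Next I would apply the functor $\c L$ to the relative Frobenius to obtain $\c L F$ acting on $\c L$ of the relative cohomology, a module of rank $\c L N$ whose fiberwise characteristic polynomial enumerates the reciprocal zeros $\c L \c A(\lambda)$. The global Euler product $L(\c L \c A, \bb G_m^s/\bb F_q, T)$ is then expressed, via a Dwork trace formula on the base, as an alternating product of characteristic polynomials of $\c L F$ acting on the Koszul/Čech cohomology of the relative cohomology module with respect to the base. Finite dimensionality of these Koszul cohomology groups over $\bb Q_p(\zeta_p)$ gives rationality; integrality of $\alpha_i, \beta_j$ follows from the usual comparison with the $\ell$-adic Grothendieck trace formula; and the relation $\beta_j = q^{m_j}\alpha_{k_j}$ reflects Poincar\'e duality on $\bb G_m^s$, which pairs $H^i$ with $H^{2s-i}$ up to a Tate twist and forces every pole of the reduced form to come from cancellation between the top-degree numerator and a lower-degree denominator.

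For part (b), the generic rank of the $s$-th Koszul cohomology group equals the lattice-point count in dilates of $\Gamma$ times the fiber rank $\c L N$. A standard volume computation (as in Adolphson-Sperber's original degree bound for a single sum, applied now to the relative polytope) gives the leading term $s!\,vol(\Gamma)\,\c L N$, and the vanishing in the case $\tilde s < s$ reflects that the cohomology collapses when $\Gamma$ fails to span $\bb R^s$. For part (c), the total degree requires bounding $\sum_i \dim H^i$ simultaneously; here I would follow the Bombieri-Katz strategy of inflating the relative polytope by a factor $(1 + 2^{1+1/\tilde s})$ in each base direction and by a factor $2^{1+1/\tilde s}$ accounting for the $|\c L|$-fold tensor inflation of the fiber polytope, then counting lattice points in the inflated polytope to dominate the Koszul differentials in all degrees at once. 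This accounts for each constant in the stated bound $\c L N \cdot \tilde s!\,vol(\Gamma) \cdot 2^{\tilde s + (1 + \frac{1}{\tilde s})n|\c L|}(1 + 2^{1+\frac{1}{\tilde s}})^s$.

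The main obstacle I expect is part (c): setting up the Bombieri inflation argument so that the combinatorics of the inflated relative polytope interact correctly with the $|\c L|$-fold tensor structure coming from $\c L$, and extracting the precise constant $2^{(1+1/\tilde s)n|\c L|}$ from the fiber polytope rather than a weaker exponential in $n|\c L|$. The rest of the proof, while technical, should follow the established Adolphson-Sperber and Wan templates once the relative cohomology theory of the paper is in place.
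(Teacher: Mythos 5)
Your outline of the relative cohomology construction (acyclicity outside top degree, $H^n$ free of rank $N$ over the base ring, applying $\c L$ to the relative Frobenius) matches the paper's Section 3. But the global part of your argument rests on a step the paper does not, and cannot easily, carry out: you propose to compute finite-dimensional Koszul/\v{C}ech cohomology of the relative cohomology module \emph{over the base} $\bb G_m^s$ and deduce rationality from finite dimensionality. The paper never computes base cohomology. Instead, the Dwork trace formula only expresses $L(\c L\c A, \bb G_m^s/\bb F_q, T)^{(-1)^{s+1}}$ as $\det(1-F_BT)^{\delta^s}$ for an \emph{infinite} nuclear matrix $F_B$, which yields $p$-adic meromorphy, not rationality; rationality is then obtained by the Borel--Dwork criterion, combining $p$-adic meromorphy with the archimedean convergence coming from the Denef--Loeser weight bounds $|\pi_i(\lambda)|_{\bb C}\le q^{n\,deg(\lambda)/2}$ and the fact that the coefficients lie in the fixed field $\bb Q(\zeta_p)$. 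Relatedly, the relation $\beta_j=q^{m_j}\alpha_{k_j}$ is not Poincar\'e duality on the base: it is Bombieri's argument (Lemma \ref{L: BombieriInj}) that the entire function $\det(1-F_BT)=\prod_\alpha H(\alpha)/\prod_\beta H(\beta)$, with $H(\gamma)=\prod_m(1-q^m\gamma T)^{c(m)}$, forces every factor $(1-\beta T)$ to divide some $H(\alpha)$. The same entireness argument, combined with the asymptotic lattice-point count in dilates of $\Gamma$ bounding the Newton polygon of $\det(1-F_BT)$ from below, is what gives part (b), including $R=S$ when $\tilde s<s$.

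The second genuine gap is in part (c). The total degree bound of Theorem \ref{T: MainDworkTheorem}(c) depends on $k$, the maximal $q$-adic valuation of the reciprocal zeros and poles, and the specific constant $2^{(1+\frac1{\tilde s})n|\c L|}$ in Theorem \ref{T: IntroMain}(c) comes from proving $k=s+n|\c L|$. Your proposal attributes this exponent to a polytope-inflation count on the fiber, but no such combinatorial argument is available: the paper obtains the bound $ord_q\gamma\le s+n|\c L|$ by an $\ell$-adic argument of Katz, showing $R^n\pi_{2!}\f L_{\Theta(\bar G)}$ is lisse of perverse origin, that $\c L R^n\pi_{2!}\f L_{\Theta(\bar G)}$ is mixed of weights $\le n|\c L|$, and invoking Deligne's Weil II integrality to control eigenvalues on $H^i_{c,\text{\'et}}$ for $s<i\le 2s$. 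Without this input (or some substitute upper bound on $ord_q$ of the eigenvalues), the Bombieri--Adolphson--Sperber counting machinery only yields a weaker total degree bound, so this step cannot be absorbed into ``the established templates'' as your proposal suggests.
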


Next, we consider the $L$-function defined over affine $s$-space. To this end, we assume $\bar P(x, t) \in \bb F_q[x_1^\pm, \ldots, n_n^\pm, t_1, \ldots, t_s]$. Set $M(\Gamma) := \bb Z^s \cap Cone(\Gamma)$, where $Cone(\Gamma)$ is the union of all rays from the origin through $\Gamma$. With $w_\Gamma$ the polyhedral weight function defined by $\Gamma$ in $\bb R^s$, define
\[
w(\Gamma) := \min\{ w_\Gamma(u) \mid u \in M(\Gamma) \cap \bb Z_{\geq 1}^s \}.
\]
Let $A \subset \{1, 2, \ldots, s\}$. Let $\bar G_A$ be the polynomial obtained from $\bar G$ by setting $t_i = 0$ for each $i \in A$. In precisely the same manner as $\Gamma$, let $\Gamma_A$ be the relative polytope of $\bar G_A$ and define its volume $vol(\Gamma_A)$ with respect to Haar measure normalized so that a fundamental domain of the integer lattice in the smallest subspace containing $\Gamma_A$ has unit volume.

\begin{theorem}\label{T: IntroMainAffine}
Suppose $\bar G \in \bb F_q[x_1^\pm, \ldots, x_n^\pm, t_1, \ldots, t_s]$. For each linear algebra operation $\c L$, the $L$-function $L(\c L \c A, \bb A^s / \bb F_q, T)$ is a rational function:
\[
L(\c L \c A, \bb A^s / \bb F_q, T)^{(-1)^{s+1}} = \frac{ \prod_{i=1}^{R} (1 - \alpha_i T)}{ \prod_{j=1}^{S}(1 - \beta_j T)} \in \bb Q(\zeta_p)(T).
\]
Writing this in reduced form ($\alpha_i \not= \beta_j$ for every $i$ and $j$):
\begin{enumerate}
\item[(a)] The reciprocal zeros and poles satisfy
\[
ord_q(\alpha_i), ord_q(\beta_j) \geq w(\Gamma).
\]
\item[(b)] The degree is bounded by
\[
- \sum_{\substack{A \subset \{1, 2, \ldots, s\} \\ |A| odd}} (s - |A|)! \> vol(\Gamma_A) \leq R - S \leq \sum_{\substack{A \subset \{1, 2, \ldots, s\} \\ |A| even}} (s - |A|)! \> vol(\Gamma_A).
\]
\item[(c)] The total degree is bounded by
\[
R + S \leq 2^{ \tilde s  + (1 + \frac{1}{\tilde s}) n | \c L | } \> 6^s \c L N \cdot \tilde s! \> vol(\Gamma)
\]
\end{enumerate}
\end{theorem}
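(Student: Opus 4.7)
The plan is to deduce Theorem \ref{T: IntroMainAffine} from the toric case (Theorem \ref{T: IntroMain}) by stratifying $\bb A^s$ into torus orbits, reading off (b) and (c) from inclusion--exclusion on the stratification, and treating the affine $p$-adic estimate (a) via a separate cohomological argument.

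\textbf{Stratification and monotonicity.} Decompose $\bb A^s = \coprod_{A \subset \{1,\ldots,s\}} T_A$ where $T_A := \{t \in \bb A^s : t_i = 0 \text{ iff } i \in A\} \cong \bb G_m^{s-|A|}$. Multiplicativity of the Euler product yields
\[
L(\c L \c A, \bb A^s / \bb F_q, T) = \prod_{A \subset \{1,\ldots,s\}} L(\c L \c A^{(A)}, \bb G_m^{s-|A|} / \bb F_q, T),
\]
where $\c A^{(A)}$ denotes the family associated with $\bar G_A$. By Theorem \ref{T: IntroMain} each factor is rational over $\bb Q(\zeta_p)(T)$; and since $T_A$ has dimension $s - |A|$, the factor indexed by $A$ enters $L(\c L \c A, \bb A^s / \bb F_q, T)^{(-1)^{s+1}}$ with net sign $(-1)^{|A|}$, so its zeros and poles swap roles when $|A|$ is odd. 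The algebraic-integer property descends from Theorem \ref{T: IntroMain}(a). Finally, passing from $\bar G$ to $\bar G_A$ deletes precisely those monomials of $\bar P$ whose $t$-exponent is supported on $A$, so by (\ref{E: GammaRelPoly}) we have $\Gamma_A \subset \Gamma$, whence $\tilde s_A \leq \tilde s$, $vol(\Gamma_A) \leq vol(\Gamma)$, and $w_{\Gamma_A} \geq w_\Gamma$ on the common coordinate subspace; this monotonicity drives the stratum-by-stratum comparisons.

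\textbf{Parts (b) and (c).} For (b), Theorem \ref{T: IntroMain}(b) bounds the reduced degree of each toric factor by $(s - |A|)! \> vol(\Gamma_A)$; combining these with the signs $(-1)^{|A|}$, and noting that post-reduction cancellation of matched zero/pole pairs preserves $R - S$, yields the asserted interval. For (c), Theorem \ref{T: IntroMain}(c) bounds each stratum's total degree; using $\tilde s_A \leq \tilde s$, $vol(\Gamma_A) \leq vol(\Gamma)$, and the elementary bound $(1 + 2^{1 + 1/\tilde s_A})^{s - |A|} \leq 5^{s-|A|}$ (valid for $\tilde s_A \geq 1$, the remaining case being degenerate), summing over $A$ gives
\[
\sum_{k=0}^{s} \binom{s}{k} 5^{s-k} = 6^{s},
\]
producing the claimed total-degree estimate.

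\textbf{Part (a): the main obstacle.} Because Theorem \ref{T: IntroMain} supplies only $ord_q \geq 0$, the sharper bound $ord_q \geq w(\Gamma)$ cannot be extracted from the stratification alone. I would instead mimic the classical Adolphson--Sperber affine Newton-polygon argument in the relative framework of this paper: realize the affine relative Frobenius as the composite of the toric relative Frobenius with the Koszul complex on the $t$-variables built from the boundary operators $t_i(\partial/\partial t_i)$. Acting on the $p$-adic Banach space of analytic functions with polyhedral growth prescribed by $\Gamma$, each such operator contributes $q$-adic divisibility at least $w_\Gamma(e_i)$; the uniform minimum $w(\Gamma) = \min_i w_\Gamma(e_i)$ (attained on standard basis vectors in the positive orthant) then bounds every Frobenius slope, hence the $q$-valuation of every reduced-form zero and pole. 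The principal technical difficulty is verifying that this Koszul-type divisibility is preserved under the passage from the characteristic power series to its reduced rational presentation --- that is, that numerator/denominator cancellation cannot destroy the bound.
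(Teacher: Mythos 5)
Your stratification of $\bb A^s$ into torus orbits and the resulting inclusion--exclusion is essentially the route the paper takes for rationality and for parts (b) and (c) (it invokes exactly this decomposition via \cite[p.557]{AdolpSperb-Newtonpolyhedraand-1987}), and your summation $\sum_k\binom{s}{k}5^{s-k}=6^s$ recovers the stated constant. One caution on the monotonicity you use for (c): the inference ``$\Gamma_A\subset\Gamma$, whence $vol(\Gamma_A)\le vol(\Gamma)$'' is not automatic, because these are normalized volumes computed in spanning subspaces of different dimensions, and a slice of a polytope can have larger normalized volume than the polytope itself; this comparison needs a separate argument (for part (b) the paper sidesteps it by keeping the bound in terms of the individual $vol(\Gamma_A)$).

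The genuine gap is part (a), and you have correctly located the hard point but proposed a tool the paper does not use and that would not be easy to make work. The paper computes no cohomology in the $t$-direction and needs no Koszul complex on the operators $t_i\partial/\partial t_i$ (which would amount to computing total rather than relative cohomology over the base). Instead, part (a) is an instance of Theorem \ref{T: MainDworkTheorem}(d), whose engine is Theorem \ref{T: GenExpSumEst}: one bounds the chain-level sums $S_k(B)=\sum_{\bar t\in\bb F_{q^k}^s}Tr(B(t))$ directly, using the affine Dwork trace formula $L(B,\bb A^s/\bb F_q,T)=\prod_{C}\det(1-q^{s-|C|}F_{B^C}T)^{(-1)^{s-|C|}}$, the Newton-polygon lower bound for each $\det(1-F_{B^C}T)$, and the inequality $\min_{u\in E^C}w(u)\ge w(\Gamma)-(s-|C|)$ from \cite[Lemma 4.5]{AdolpSperb-$p$-adicestimatesexponential-1987}; the weight loss $-(s-|C|)$ is exactly absorbed by the factor $q^{s-|C|}$ precisely when $b(p-1)\le 1$, which is where $w(\Gamma)$ survives to the final bound. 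The ``principal technical difficulty'' you flag --- that numerator/denominator cancellation in the reduced form could destroy the estimate --- is then dissolved by the Ax-style equivalence used in the proof of Theorem \ref{T: MainDworkTheorem}(d): $ord_q\,S_k(B)\ge k\kappa$ for all $k$ if and only if every reciprocal zero and pole of the meromorphic $L$-function has $ord_q\ge\kappa$, and the power sums are insensitive to how the rational function is presented, so no cancellation analysis is required. Finally, your identification $w(\Gamma)=\min_i w_\Gamma(e_i)$ is incorrect: by definition $w(\Gamma)=\min\{w_\Gamma(u)\mid u\in M(\Gamma)\cap\bb Z_{\ge1}^s\}$, the minimum over lattice points with \emph{all} coordinates positive, not over standard basis vectors; this is exactly why the subset $C=\{1,\dots,s\}$, which forces all coordinates positive, governs the bound in the case $b(p-1)\le1$.
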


Lastly, we look at the case of an affine family over an affine base. Suppose $\bar G$ now is a polynomial in $\bb F_q[x_1, \ldots, x_n, t_1, \ldots, t_s]$, and $\bar f$ is convenient and nondegenerate. For each $\lambda \in |\bb A^s / \bb F_q|$, let $\tilde{\c A}(\lambda) = \{\pi_i(\lambda)\}_{i=1}^{\tilde N}$ be the set of reciprocal zeros of $L(\bar G_\lambda, \Theta, \bb A^n / \bb F_q(\lambda), T)^{(-1)^{n+1}}$. Define $w(\Delta)$ for $\Delta = \Delta_\infty(\bar f)$ in a similar way to that of $w(\Gamma)$, so that $w(\Delta) = \min \{ w(\gamma) \mid \gamma \in M(\Delta) \cap \bb Z^n_{\geq 1} \}$.

\begin{theorem}
Suppose the conditions of the previous paragraph. For each linear algebra operation $\c L$, the $L$-function $L(\c L \tilde{\c A}, \bb A^s / \bb F_q, T)$ is a rational function:
\[
L(\c L \tilde{\c A}, \bb A^s / \bb F_q, T)^{(-1)^{s+1}} = \frac{ \prod(1 - \alpha_i T)}{ \prod(1 - \beta_j T)} \in \bb Q(\zeta_p)(T).
\]
Writing this in reduced form, then $ord_q(\alpha_i)$ and $ord_q(\beta_j) \geq w(\Gamma) + w(\Delta) \c L \tilde N$.
\end{theorem}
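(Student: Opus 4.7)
The plan is to reduce to Theorem \ref{T: IntroMain} by a two-stage inclusion--exclusion---one at the affine base $\bb A^s$ and one at the affine fibre $\bb A^n$---and to deduce the $p$-adic valuation bound by combining a ``base'' contribution $w(\Gamma)$ coming from the polyhedral growth of the coefficient ring with a ``fibre'' contribution coming from the Newton polygon bound for the affine fibral $L$-function.

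First I would stratify the base as $\bb A^s = \coprod_{A \subset \{1,\ldots,s\}} V_A$, where $V_A \cong \bb G_m^{s-|A|}$ is the stratum on which $t_j = 0$ exactly for $j \in A$. This produces a multiplicative decomposition
\[
L(\c L \tilde{\c A}, \bb A^s/\bb F_q, T) = \prod_{A} L(\c L \tilde{\c A}_A, V_A/\bb F_q, T),
\]
where $\tilde{\c A}_A$ denotes the reciprocal-zero collection associated to the restricted family $\bar G_A$ over $V_A$. Next, since $\bar f$ is convenient and nondegenerate, every face polynomial $\bar f|_J$ (obtained by setting $x_i = 0$ for $i \notin J$) is nondegenerate with respect to its own Newton polyhedron at infinity, and the restriction $\bar G_{A,J}$ remains a lower-order deformation of $\bar f|_J$. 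Inclusion--exclusion at the fibre therefore expresses the characteristic polynomial of the fibral Frobenius of $\bar G_{A,\lambda}$ as a signed product of characteristic polynomials of the toric Frobenii on each $\bar G_{A,J,\lambda}$, and each resulting toric-over-toric family falls under Theorem \ref{T: IntroMain}.

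To carry the operation $\c L$ through this virtual decomposition, I would pass via Adams operations $\psi^k$, which are additive on virtual representations (so commute with the signed sums of Frobenii produced above) and which, through the Newton identities and standard plethystic relations, generate the symmetric, tensor, and exterior power operations. In this way each $L(\c L \tilde{\c A}_A, V_A/\bb F_q, T)$ is rewritten as an Euler product of $L$-functions of Adams operations applied to the toric-over-toric pieces $\bar G_{A,J}$; applying Theorem \ref{T: IntroMain} to each of these and assembling supplies rationality over $\bb Q(\zeta_p)(T)$ and algebraic integrality of the reciprocal zeros and poles of the reduced form.

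For the divisibility, the $w(\Gamma)$ summand arises, exactly as in the proof of Theorem \ref{T: IntroMainAffine}(a), from the fact that every matrix coefficient of relative Frobenius on the coefficient ring of polyhedral growth $\Gamma$ has $q$-valuation at least $w(\Gamma)$. The summand $w(\Delta) \c L \tilde N$ comes from the fibre: convenience of $\bar f$ together with the affine Adolphson--Sperber Newton polygon bound gives $ord_{q^{deg(\lambda)}} \pi_i(\lambda) \geq w(\Delta)$ uniformly in $\lambda$ and $i$, and each element of $\c L \tilde{\c A}(\lambda)$ is a monomial product in these reciprocal zeros whose aggregate length is controlled by the cardinality $\c L \tilde N$. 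The main technical obstacle will be verifying that both bounds genuinely survive the two inclusion--exclusions: one must check that $w(\Gamma_A) \geq w(\Gamma)$ for every sub-polytope $\Gamma_A$, so that the base bound is not weakened when passing from $V_A$ back to $\bb A^s$, and that the Adams-operation reorganisation does not produce virtual cancellations in the reduced form that destroy the fibral estimate; the latter is the delicate point, since the linear algebra operation $\c L$ is not itself additive and the reduction to $\psi^k$'s must be carried out at the level of characteristic power series rather than just eigenvalue sets.
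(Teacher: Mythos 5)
Your approach is genuinely different from the paper's, and unfortunately it has gaps at both of the stages you yourself flag as delicate.

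What the paper actually does: it never reduces the affine-over-affine case to the toric-over-toric case via stratification. Instead it constructs, in Section~\ref{SS: Mixed}, a subcomplex $\Omega^\bullet(\c C_0, \c S_2, \nabla_{\bar G})$ of the relative de~Rham complex (Theorem~\ref{T: 22}, applied here with $\c S_2 = \c S$), whose top cohomology is a free $\c O_0$-module with basis $\bar B^{(\c S_2)}$ consisting of monomials $x^\mu$ with every $\mu_i \geq 1$ --- hence each of fibre weight $\geq w(\Delta)$. One then applies $\c L$ directly to this fibral Frobenius and invokes Theorem~\ref{T: MainDworkTheorem}(d); this gives Theorem~\ref{T: MixedCase}(d), of which the statement you are proving is the $\c S_2 = \c S$ case. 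The affineness of the base is handled inside Theorem~\ref{T: GenExpSumEst} via the affine Dwork trace formula, at the level of Fredholm determinants, not by decomposing the $L$-function geometrically.

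First gap: your base stratification $\bb A^s = \coprod_A V_A$ cannot produce the $w(\Gamma)$ summand. Each factor $L(\c L\tilde{\c A}_A, V_A/\bb F_q, T)$ is an $L$-function over a torus, and Theorem~\ref{T: IntroMain} gives no lower bound on $ord_q$ of its reciprocal zeros --- part (a) only asserts integrality and the relation $\beta_j = q^{m_j}\alpha_{k_j}$, so a reciprocal zero of $ord_q = 0$ is perfectly allowed, and nothing forces it to cancel in the product over $A$. The paper gets the $w(\Gamma)$ contribution precisely because the inclusion--exclusion used in Theorem~\ref{T: GenExpSumEst} happens at the level of $F_{B^C}$ together with the explicit twist $q^{s-|C|}$; that twist is lost as soon as you pass from Fredholm determinants to the geometric $L$-functions over the tori $V_A$.

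Second gap: in your divisibility argument you write that each element of $\c L\tilde{\c A}(\lambda)$ has ``aggregate length controlled by the cardinality $\c L\tilde N$.'' That conflates the cardinality of the set $\c L\tilde{\c A}(\lambda)$ with the order $|\c L|$ of the operation, which are completely different; an element of $\c L\tilde{\c A}(\lambda)$ is a product of at most $|\c L|$ reciprocal zeros, so it has $ord_q \geq |\c L|\,w(\Delta)$, not $\c L\tilde N\cdot w(\Delta)$, and the paper's own Theorem~\ref{T: MixedCase}(d) accordingly phrases the fibre contribution as $w(\c L B^{(\c S_2)})$, the minimum basis weight, which is $\geq |\c L|\,w(\Delta)$. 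You are rationalizing the formula as printed rather than deriving it; this particular step of your proposal does not support the stated inequality.

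Finally, the Adams-operation reduction is more problematic than your hedge suggests. The identities expressing a genuine operation $\c L$ through Adams operations $\psi^k$ live in the $\lambda$-ring with $\bb Q$-coefficients (e.g., $Sym^2 = \tfrac{1}{2}\bigl(\psi^1\otimes\psi^1 + \psi^2\bigr)$), so they do not translate cleanly into a $\bb Z$-indexed product/quotient of $L$-functions; and even where one does obtain a quotient of $L$-functions each individually satisfying a $p$-adic bound, one must additionally justify that the bound survives cancellation --- which in your set-up collides with the first gap above, since the factors obtained by stratification have no bound to begin with.
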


We note that the upper bound on the degree in Theorem \ref{T: IntroMain}{\it (b)} and the lower bound on the $p$-adic order of the roots in Theorem \ref{T: IntroMainAffine}{\it (a)} are sharp in the sense that there are examples where the bounds are obtained (e.g. \cite{Haessig-$L$-functionsofsymmetric-} and \cite{HaesRoja-$L$-functionsofsymmetric-}).

When we work with proper subsets $\c A_0 \subset \c A$, we in general do not expect rationality of the $L$-function, for example in the case when $\c A_0$ is the unit root family. As mentioned earlier, Wan's proof of Dwork's conjecture uses a $p$-adic limiting argument of $L$-functions associated to Adams operations. Since Adams operations may be viewed as a virtual linear algebra operation, we may apply the lower bound in Theorem \ref{T: IntroMainAffine}(a) to this sequence of $L$-functions to obtain a similar result for the unit root $L$-function (\ref{E: DworkUnit}). This is discussed in Sections \ref{SS: unit} and \ref{SS: Unitfam}.

We view the main contributions of the present study to be the discovery of the role played by the relative polytope $\Gamma$ for very general families of nondegenerate toric exponential sum. Previous $p$-adic studies have mainly been one parameter families in which the parameter appears linearly. In the present study we remove these restrictions. We are able nevertheless to compute relative cohomology, and the relative polytope provides a sufficiently good weight function so that the general results obtained are sharp in cases where the $L$-functions have previously been computed. In other earlier work (see \cite{AdolpSperb-Newtonpolyhedraand-1987} and \cite{AdolpSperb-ExponentialSumsand-1989}), understanding the weight function was an essential step in enabling the calculation of $p$-adic cohomology. It is our hope that the relative polytope provides a similar key step here. In a future article, we intend to treat families of Kloosterman-like sums, including the calculation of the relevant $p$-adic cohomology. It should be noted that while our main application has been to families of toric nondegenerate exponential sums these results have broader application to $\sigma$-modules with polyhedral growth, the content of which is in Section \ref{S: sigma-modules}.

{\bf Acknowledgment:} We thank Nick Katz for providing the $\ell$-adic proof of an upper bound for the $p$-adic order of eigenvalues of Frobenius used in Section \ref{S: Toric Family}.

%%%-----------------------------------------------------------------------
\section{$L$-functions}\label{S: sigma-modules}

Let $\Gamma$ denote a fixed polytope in $\bb R^s$ with rational vertices which contains the origin, perhaps on its boundary. Let $\tilde s$ denote the dimension of the smallest linear space in $\bb R^s$ containing $\Gamma$. We will assume $\tilde s \geq 1$. Let $Cone(\Gamma)$ be the union of all rays from the origin through $\Gamma$ and set $M(\Gamma) := \bb Z^s \cap Cone(\Gamma)$. For each $u \in M$, we define the weight $w(u)$ of $u$ as the smallest nonnegative real number such that $u$ is in the dilation $w(u) \Gamma$. Since the values of $w$ on $M(\Gamma)$ may be described using rational linear forms coming from a finite number of top dimensional faces of the polytope, there exists a positive integer $D = D(\Gamma)$ such that $w(M(\Gamma)) \subset (1/D)\bb Z_{\geq 0}$. With $q = p^a$, let $\bb Q_q$ denote the unramified extension of $\bb Q_p$ of degree $a$, and $\bb Z_q$ its ring of integers. 

With an eye toward obtaining $p$-adic estimates for the Frobenius below, we fix $\pi$, a zero of the series $\sum_{j=0}^\infty y^{p^j} / p^j$ having $ord_p \pi = 1/(p-1)$. We are most interested in the extension $\bb Q_q(\zeta_p)$ of $\bb Q_p(\zeta_p)$. These fields have ring of integers respectively $\bb Z_q[\pi]$ and $\bb Z_p[\pi]$. At various points in the exposition we will want to take totally ramified extensions of $\bb Q_q(\zeta_p)$ and $\bb Q_p(\zeta_p)$. We will accomplish this by adjoining an appropriate root $\tilde \pi$ of $\pi$, say $\tilde \pi = \pi^{1/\tilde D}$ for some positive integer $\tilde D$.  The Frobenius automorphism $\sigma$ of $Gal(\bb Q_q / \bb Q_p)$ is extended to $Gal(\bb Q_q(\tilde \pi) / \bb Q_p(\tilde \pi))$ by setting $\sigma(\tilde \pi) = \tilde \pi$. Let us take then $K = \bb Q_q(\tilde \pi)$ and assume the ramification index of $K / \bb Q_p$ is $e$.

Denote by $\bb Z_q[\tilde \pi]$ the ring of integers of $K$.  Using the multi-index notation $t^u := t_1^{u_1} \cdots t_s^{u_s}$, where $u = (u_1, \ldots, u_s)$, we define the overconvergent power series ring $\c O^\dag_\Gamma$ as
\[
\c O^\dag_{\Gamma} := \left\{ \sum_{u \in M(\Gamma)} a_u t^u \mid a_u \in \bb Z_q[\tilde \pi], \liminf_{w(u) \rightarrow \infty} \frac{ ord_\pi( a_u) }{w(u)} > 0 \right\}.
\]

Let $A := ( A_{i,j} )_{i, j \in I}$ be a square (possibly infinite) matrix with entries in $\c O^\dag_{\Gamma}$. We assume the index set is countable (or finite) and we take it to be $I = \{0, 1, 2, \ldots \}$. We will assume that $A$ is \emph{nuclear}, meaning here its columns tend to zero $p$-adically (i.e. $\sup_i |A_{i,j}| \rightarrow 0$ as $j \rightarrow \infty$). When $A$ is a finite square matrix it is automatically nuclear since the nuclear condition is vacuous. Let $\hat t \in (\overline{\bb Q}_p^*)^{s}$ be the Teichm\"uller lifting of a point $\bar t \in  (\overline{\bb F}_q^*)^{s}$, and let $deg(\bar t) := [ \bb F_q(\bar t) : \bb F_q]$. We extend $\sigma$ to an automorphism of $\c O^\dag_\Gamma$ by acting on the coefficients of the power series. Define the matrix
\[
B(t) := A^{\sigma^{a-1}}(t^{p^{a-1}}) \cdots A^\sigma(t^p) A(t).
\]
Following Dwork \cite{Dwork-NormalizedPeriodsII}, we may associate to $B$ the $L$-function
\begin{equation}\label{E: LBdef}
L(B, \bb G_m^s / \bb F_q, T) := \prod_{\bar t \in |\bb G_m^s / \bb F_q|} \frac{1}{det(1 - B(\hat t^{q^{deg(\bar t)-1}}) \cdots B(\hat t^q) B(\hat t)  T^{deg(\bar t)})} \in 1 + T \bb Z_q[\pi][[T]],
\end{equation}
where the product runs over all closed points of the algebraic torus $\bb G_m^s$ over $\bb F_q$. This may be generalized so that the product runs over closed points of an algebraic variety; of particular interest is when the polytope $\Gamma$ lies within the first quadrant $\bb R^s_{\geq 0}$ so that we may define the $L$-function over affine $s$-space $\bb A^s$.

As it stands, $L(B, \bb G_m^s / \bb F_q, T)$ is not in general a rational function. In fact, it is not even $p$-adic meromorphic in general  (see \cite[Theorem 1.2]{Wan-p-adic-representation}). However, we will either insist on its rationality or assume a uniform overconvergent condition (\ref{E: uniform overconvergent}) which will guarantee $p$-adic meromorphy of the $L$-function.

It is useful to measure the nuclear condition on $A$ as follows. For each integer $j \geq 0$, let $d_j$ be the smallest nonnegative integer such that $A_{i,j'} \equiv 0$ mod($\tilde \pi^j$) for all $j' \geq d_j$, where we say $A_{ij} \equiv 0$ mod($\tilde \pi^j$) if $\tilde \pi^j$ divides every coefficient of the series $A_{ij} = A_{ij}(t)$. That is, $A_{i, d_j}, A_{i, d_j +1}, A_{i, d_j + 2}, \ldots$ is divisible by $\tilde \pi^j$. Note that $d_0 = 0$. Define $h_j := d_{j+1} - d_j$. This means that the first $h_0$ columns of $A$ are divisible by at least $\tilde \pi^0 = 1$, the next $h_1$ columns are divisible by at least $\tilde \pi$, the next $h_2$ columns by $\tilde \pi^2$, and so forth. Next, we need a function which will provide us with how divisible the $j$-th column is according to these $h_i$. Set $s(0) := 0$, and for each integer $j \geq 1$, let $s(j)$ denote the largest integer such that $j \geq d_{s(j)}$. That is, $s(j) = \ell$, where $\ell$ is the minimum integer such that $j \in [0, h_0 + h_1 + \cdots + h_\ell]$. This means that column $j$ of the matrix $A$ is divisible by at least $\tilde \pi^{s(j)}$.

Now let $A(t)$ be a square matrix indexed by $I$ (possibly infinite) with entries series in $\c O_\Gamma^\dag$ with coefficients in $K$. Let $b \in \bb R_{>0}$, $\rho \in \bb R$. Dwork defined spaces
\begin{align*}
L(b; \rho) &:= \left\{ \sum_{u \in M(\Gamma)} c_u t^u \mid c_u \in \bb Q_q(\tilde \pi), ord_p(c_u) \geq b w(u) + \rho \right\} \\
L(b) &:= \bigcup_{\rho \in \bb R} L(b; \rho).
\end{align*}
Write $A(t) = \sum_{u \in M(\Gamma)} a_u t^u$ where $a_u = (a_u(i, j) )_{i,j \in I}$ is a matrix with coefficients in $\bb Z_q[\tilde \pi]$. When $I$ is infinite we shall assume that
\begin{equation}\label{E: uniform overconvergent}
\liminf_{i, j \in I \text{ and } w(u) \rightarrow \infty} \frac{ ord_p( a_u(i, j) \tilde \pi^{-s(j)} ) }{w(u)} > 0,
\end{equation}
a {\it uniform overconvergent} condition. We note that $w(u)$ provides the polyhedral growth of the coefficients $a_u(i, j)$ with respect to $t^u$ while $\tilde \pi^{s(j)}$ describes the weight placed on each column index $j$. When $A$ satisfies (\ref{E: uniform overconvergent}), including the case when $I$ is finite, then there exists a positive real number $b$ in which all entries of $A$ lie within the Dwork space $L(b)$. Furthermore, there exists a real number $\rho$ such that $A_{i,j} \in L(b; \frac{s(j)}{e} + \rho)$ for all $i, j \in I$. In this case, we may define the matrix $A' := p^{-\rho} A$ whose entries $A_{i,j}' \in L(b; s(j)/e)$ for all $i, j \in I$, and
\[
L(A', \bb G_m^s / \bb F_q, T) = L(p^{-\rho} A, \bb G_m^s/ \bb F_q, T) = L( A, \bb G_m^s/ \bb F_q, p^{-\rho} T).
\]
Thus, we may at various points throughout the paper assume that $A$ has been normalized such that $\rho = 0$, or equivalently, $h_0 \not= 0 $.

Write $B(t) = \sum_{u \in M(\Gamma)} b_u t^u$. Define $F_B$ as the matrix $(b_{qu - v})_{(u,v)}$, where $u$ and $v$ run over $M(\Gamma)$, and we set $b_{qu - v} = 0$ if $qu - v \not\in M(\Gamma)$. That is, the $(u,v)$ block entry of $F_B$ is the matrix $b_{qu - v}$. Note, even if $B$ is a finite dimensional matrix, $F_B$ is infinite dimensional. Dwork's trace formula \cite[Lemma 4.1]{Wan-p-adic-representation} says
\begin{align}\label{E: Dworktrace}
L(B, \bb G_m^s / \bb F_q, T)^{(-1)^{s+1}} &= \prod_{i=0}^s det(1 - q^i F_B T)^{(-1)^i \binom{s}{i}} \notag  \\
&= det(1 - F_B T)^{\delta^s},
\end{align}
where $\delta$ sends an arbitrary function $g(T)$ to the quotient $g(T) / g(qT)$. Since the entries $a_u$ satisfy (\ref{E: uniform overconvergent}), so do the entries $b_u$; in particular, if $A_{i,j} \in L(b)$ then $B_{i,j} \in L(b/p^{a-1})$. Consequently, the Fredholm determinant $det(1 - q^i F_B T)$ is $p$-adic entire by \cite[Proposition 3.6]{Wan-p-adic-representation}, and thus $L(B, \bb G_m^s / \bb F_q, T)$ is $p$-adic meromorphic on $\bb C_p$ by (\ref{E: Dworktrace}).

%%%-----------------------------------------------------------------------
\subsection{Main theorems on general $L$-functions}\label{SS: Main theorems}

In this section we prove two main results about $L$-functions of overconvergent matrices defined over $\c O_\Gamma^\dag$. We recall the following definitions and associated data from the previous section:
\begin{align*}
& \Gamma, \text{ a rational polytope in $\bb R^s$ with volume $vol(\Gamma)$} \\
& \tilde s, \text{ the dimension of the smallest linear space in $\bb R^s$ containing $\Gamma$} \\
& Cone(\Gamma), \text{ the cone in $\bb R^s$ over $\Gamma$} \\
& M(\Gamma) := \bb Z^s \cap Cone(\Gamma) \\
& K = \bb Q_q(\tilde \pi), \text{ with $ord_p(\tilde \pi) = 1/e$} \\
& A = (A_{i,j})_{i,j \in I}, \text{ a matrix with entries satisfying } A_{i,j} \in L(b; s(j)/e) \text{ for some fixed $b \in \bb Q_{>0}$} \\
& B, \text{ a matrix defined by }B(t) := A^{\sigma^{a-1}}(t^{p^{a-1}}) \cdots A^\sigma(t^p) A(t) \\
& w(\Gamma) := \min\{ w(u) \mid u \in M(\Gamma) \cap \bb Z_{>0}^s \} \\
& ord_p(A) := \min\{ s(i)/e \mid i \in I \}.
\end{align*}

It is convenient to assume that the field $K$ is sufficiently ramified so that the denominator of $b$ and $D(\Gamma)$ divide $e$. This does not for example change $ord_p(A)$. Suppose $A$, and hence $B$, are $N \times N$ matrices and write $dim(B) := N$. Suppose $L(B, \bb G_m^s / \bb F_q, T)$ is a rational function:
\begin{equation}\label{E: Lfunfrac}
L(B, \bb G_m^s / \bb F_q, T)^{(-1)^{s+1}} = \frac{\prod_{i=1}^R (1 - \alpha_i T)}{\prod_{j=1}^S (1 - \beta_j T)}
\end{equation}
written in reduced form (i.e. $\alpha_i \not= \beta_j$ for every $i$ and $j$). Let $Z := \{ \alpha_1, \ldots, \alpha_R \}$ be the multiset of reciprocal zeros and $P := \{ \beta_1, \ldots, \beta_S \}$ the multiset of reciprocal poles of (\ref{E: Lfunfrac}). Two elements $\gamma_1, \gamma_2 \in Z \cup P$ are said to be \emph{$q$-related} if there is an integer $\tau$ such that $\gamma_1 = q^\tau \gamma_2$. This defines an equivalence relation on $Z \cup P$. Let $\c E := \{ E_1, \ldots, E_\ell \}$ be the set of equivalence classes. Focussing on a fixed $E \in \c E$, define
\[
L_E(B, \bb G_m^s / \bb F_q, T)^{(-1)^{s+1}} := \frac{\prod_{ \alpha \in Z \cap E}  (1 - \alpha T)}{\prod_{\beta \in P \cap E} (1 - \beta T)},
\]
and if $R_E$ denotes the cardinality of $Z \cap E$ and $S_E$ the cardinality of $P \cap E$, then we may order the reciprocal zeros and poles giving
\[
L_E(B, \bb G_m^s / \bb F_q, T)^{(-1)^{s+1}} = \frac{\prod_{i=1}^{R_E}  (1 - \alpha_i T)}{\prod_{j=1}^{S_E} (1 - \beta_j T)}.
\]
Clearly,
\[
L(B, \bb G_m^s / \bb F_q, T) = \prod_{E \in \c E} L_E(B, \bb G_m^s / \bb F_q, T).
\]

\begin{lemma}\label{L: BombieriInj}
Under the conditions assumed above, then
\begin{enumerate}
\item for each $E \in \c E$, $R_E - S_E \geq 0$. (Consequently, the degree $R - S$ of $L(B, \bb G_m^s / \bb F_q, T)^{(-1)^{s+1}}$ as a rational function is nonnegative.)
\item For each $E \in \c E$, there is a choice $\gamma_E \in \{\alpha_1, \ldots, \alpha_{R_E} \}$ such that there are nonnegative integers $\{ m_i \}_{i=1}^{R_E}$ and strictly positive integers $\{ n_j \}_{j=1}^{S_E}$ with
\[
\alpha_i = q^{m_i} \gamma_E \quad \text{and} \quad \beta_j = q^{n_j} \gamma_E.
\]
\end{enumerate}
\end{lemma}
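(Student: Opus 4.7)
My plan is to deduce both parts of the lemma from Dwork's trace formula (\ref{E: Dworktrace}) by packaging, for each equivalence class $E\in\c E$, all of its information into a single-variable generating function. Writing the Fredholm determinant as $\det(1-F_B T) = \prod_j (1-\eta_j T)$, formula (\ref{E: Dworktrace}) reads
\[
L(B,\bb G_m^s/\bb F_q,T)^{(-1)^{s+1}} = \prod_{j}\prod_{i=0}^{s}(1-q^i\eta_j T)^{(-1)^i\binom{s}{i}}.
\]
Fix $E\in\c E$. Since $\det(1-F_B T)$ is $p$-adic entire, $|\eta_j|_p\to 0$, so among those eigenvalues of $F_B$ lying in $E$ there is one of minimal $q$-adic order; call it $\gamma_0$. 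For each integer $m$ let $a_m\in\bb Z_{\geq 0}$ be the multiplicity of $q^m\gamma_0$ as an eigenvalue of $F_B$, so $a_0\geq 1$ and $a_m=0$ for $m<0$, and set $g_E(x) := \sum_{m\geq 0} a_m x^m$.

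Grouping the factors above by the $q$-shift of $q^i\eta_j$ relative to $\gamma_0$, the signed multiplicity $c_k$ with which $q^k\gamma_0$ appears in the reduced factorization (\ref{E: Lfunfrac}) equals $\sum_{i=0}^{s}(-1)^i\binom{s}{i}a_{k-i}$, which yields the formal identity
\[
f_E(x) := \sum_{k} c_k x^k = g_E(x)(1-x)^s,
\]
and $f_E$ is a polynomial because only finitely many $\alpha_i,\beta_j$ lie in $E$. For part (2), I take $\gamma_E := \gamma_0$. Since $c_0 = a_0 \geq 1$, the element $\gamma_E$ occurs as a reciprocal zero in reduced form, so $\gamma_E \in \{\alpha_1,\ldots,\alpha_{R_E}\}$. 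Because $c_k = 0$ for $k<0$, every reciprocal zero in $E$ has the form $q^{m_i}\gamma_E$ with $m_i\geq 0$; a reciprocal pole in $E$ cannot occur at $k<0$ (where $c_k=0$) nor at $k=0$ (where $\gamma_E$ is already a zero and we are in reduced form), so every reciprocal pole has the form $q^{n_j}\gamma_E$ with $n_j\geq 1$.

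For part (1) it then remains to check $R_E - S_E = \sum_k c_k = f_E(1) \geq 0$. Because $f_E$ is a polynomial, expanding $(1-x)^{-s}$ binomially in the formal identity $g_E(x)=f_E(x)(1-x)^{-s}$ forces $a_m = O(m^{s-1})$, so the series $g_E(x)$ converges on the real interval $[0,1)$; there $g_E(x)\geq 0$ (nonnegative coefficients) and $(1-x)^s\geq 0$, hence $f_E(x)\geq 0$, and by continuity of the polynomial $f_E$ we get $f_E(1)\geq 0$, as required. The one conceptual step in this proof is the choice of $\gamma_0$ as an eigenvalue of $F_B$ (not merely a reciprocal zero of the $L$-function) of minimal $q$-adic order in $E$; this is what guarantees that $g_E$ has nonnegative coefficients and support in $\bb Z_{\geq 0}$, after which the minimum-of-support argument for part (2) and the positivity-at-the-boundary argument for part (1) both become routine.
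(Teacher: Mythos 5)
Your proof is correct and is essentially the paper's own argument (Bombieri's method): your identity $g_E(x)=f_E(x)(1-x)^{-s}$ with all $a_m\ge 0$ is precisely the statement that the class-$E$ factor $D_E(T)$ of the Fredholm determinant is entire, since the paper's exponent $c(m)=\binom{m+s-1}{s-1}$ is exactly the coefficient of $x^m$ in $(1-x)^{-s}$. The only cosmetic differences are that you anchor the class at a minimal-order eigenvalue of $F_B$ rather than at a chosen pole $\beta_1$, and you extract $R_E-S_E\ge 0$ by letting $x\to 1^-$ in $f_E(x)=g_E(x)(1-x)^s\ge 0$ instead of comparing the leading asymptotics $R_E\,m^{s-1}/(s-1)!$ versus $S_E\,m^{s-1}/(s-1)!$ of the zero multiplicities for large $m$.
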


\begin{proof}
The following proof illustrates Bombieri's method \cite[Section 4, p.83]{Bombieri-exponentialsumsin-1966}. For $\gamma \in Z \cup P$ we write
\[
H(\gamma) := \prod_{m=0}^\infty (1 - q^m \gamma T)^{c(m)}
\]
where $c(m) := \binom{m+s-1}{s-1}$. The $\delta$-structure (\ref{E: Dworktrace}) implies
\[
det(1 - F_B T) = \frac{\prod_{\alpha \in Z} H(\alpha)}{\prod_{\beta \in P} H(\beta)},
\]
and this is $p$-adically entire as noted above. Note that for two elements $\gamma_1, \gamma_2 \in Z \cup P$, $\gamma_1$ and $\gamma_2$ are $q$-related if and only if $H(\gamma_1)$ and $H(\gamma_2)$ have a factor in common. As a consequence, if we write 
\[
D_E(T) := \frac{\prod_{\alpha \in Z \cap E} H(\alpha)}{\prod_{\beta \in P \cap E} H(\beta)},
\]
then
\[
det(1 - F_BT) = \prod_{E \in \c E} D_E(T)
\]
and each $D_E(T)$ is $p$-adically entire. If we fix $\beta_1 \in E \cap P$ so that $\beta_1 = q^{m_j} \beta_j$ with $m_j \geq 0$ for $j = 2, \ldots, S_E$, then we also have $\beta_1 = q^{t_i} \alpha_i$ with $t_i$ a non-zero integer for $i = 1, \ldots, S_E$. The precise divisibility of $(1 - q^m \beta_1 T)$ in $\prod_{i=1}^{R_E} H(\alpha_i)$,  for $m$ sufficiently large, is $\sum_{i=1}^{R_E} c(m + t_i)$ which grows with $m$ like a polynomial of the form $R_E \frac{ m^{s-1}}{(s-1)!} + (\text{lower order terms})$. Similarly, the precise divisibility of $(1- q^m \beta_1 T)$ in $\prod_{j=1}^{S_E} H(\beta_j)$  is $c(m) + \sum_{j=2}^{S_E} c(m+m_j)$ which grows like a polynomial of the form  $S_E \frac{m^{s-1}}{(s-1)!} + (\text{lower order terms})$. Since $D_E(T)$ is $p$-adically entire, any factor $(1 - q^m \beta_1 T)$ must have nonnegative exponent in $\prod_{i=1}^{R_E} H(\alpha_i) / \prod_{j=1}^{S_E} H(\beta_j)$ so that for $m$ sufficiently large, we must have $R_E - S_E \geq 0$. This completes the proof of the first part of the theorem. 

The proof of the second part is simpler. Note that $D_E(T)$ is entire so that each factor $(1- \beta T)$ must divide $\prod_{i=1}^{R_E} H(\alpha_i)$, as a consequence $\beta_j = q^{m_j} \alpha_{k(j)}$ for some $m_j \geq 1 $ and $1 \leq k(j) \leq R_E$. The result then follows by choosing $\gamma_E$ among $\{ \alpha_1, \ldots, \alpha_{R_E} \}$ so that $\alpha_i = q^{m_i} \gamma_E$ for nonnegative integers $\{m_i\}_{i=1}^{R_E}$. 
\end{proof}

We note that it is an interesting question to determine an upper bound for the degree of $L_E(B, \bb G_m^s / \bb F_q, T)$, as well as an estimate for the number of equivalence classes in $\c E$.

\begin{theorem}\label{T: MainDworkTheorem}
Suppose $A$, and hence $B$, are $N \times N$ matrices and write $dim(B) := N$. In parts (a) through (c) below, we assume that $L(B, \bb G_m^s / \bb F_q, T)$ is a rational function:
\[
L(B, \bb G_m^s / \bb F_q, T)^{(-1)^{s+1}} = \frac{\prod_{i=1}^R (1 - \alpha_i T)}{\prod_{j=1}^S (1 - \beta_j T)}
\]
written in reduced form (i.e. $\alpha_i \not= \beta_j$ for every $i$ and $j$). In part (d), we assume condition (\ref{E: uniform overconvergent}) so that this $L$-function is $p$-adic meromorphic. Then,
\begin{enumerate}
\item[a)] The reciprocal zeros and poles $\alpha_i$ and $\beta_j$ are algebraic integers. Also, for each reciprocal pole $\beta_j$ there is a reciprocal zero $\alpha_{k_j}$ and a positive integer $m_j$ such that $\beta_j = q^{m_j} \alpha_{k_j}$. 
\item[b)] If $\tilde s < s$ then $R = S$, else if $\tilde s = s$ then
\[
0 \leq R - S \leq \left( \frac{1}{b(p-1)} \right)^s \cdot s! \> vol(\Gamma) \cdot dim(B)
\]
\item[c)] Let $k$ be the smallest positive integer such that $ord_q \alpha_i$ and $ord_q \beta_j \leq k$ for all $i$ and $j$. Then, with $\rho := \min\{s, k\}$, we have
\[
R + S \leq
dim(B) \cdot \tilde s! \> vol(\Gamma) \cdot 2^{\tilde s + \frac{1}{b(p-1)}(1 + \frac{1}{\tilde s})(k - \rho)} ( 1 + 2^{\frac{1}{b(p-1)}(1 + \frac{1}{\tilde s})})^\rho.
\]
\item[d)] Suppose $\Gamma \subset \bb R_{\geq 0}^s$ and $b(p-1) \leq 1$. Write
\[
L(B, \bb A^s / \bb F_q, T) = \frac{ \prod (1 - \alpha_i T)}{\prod (1 - \beta_j T)}.
\]
Here, $A$, and hence $B$, may be infinite dimensional, in which case $\alpha_i$ and $\beta_j \rightarrow 0$ $p$-adically. Then $ord_q \alpha_i$ and $ord_q \beta_j \geq b(p-1) w(\Gamma) + ord_p(A)$ for all  $i$ and $j$. A lower bound is also given when $b(p-1) > 1$.
\end{enumerate}
\end{theorem}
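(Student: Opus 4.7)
The plan rests on Dwork's trace formula (\ref{E: Dworktrace}), $L^{(-1)^{s+1}} = \det(1-F_BT)^{\delta^s}$, combined with the Bombieri-style divisibility argument of Lemma \ref{L: BombieriInj}. Writing the $p$-adically entire $\det(1-F_BT) = \prod_i(1-\lambda_iT)$ as a (possibly infinite) product gives
\[
L(B, \bb G_m^s/\bb F_q, T)^{(-1)^{s+1}} = \prod_i \prod_{j=0}^s (1 - q^j \lambda_i T)^{(-1)^j \binom{s}{j}}.
\]
Rationality of $L$ forces the $\lambda_i$ to organize into $q$-orbits whose multiplicities eventually become polynomial in the $q$-shift index of degree $<s$; the $s$-th finite-difference operator $\delta^s$ then annihilates all but finitely many tail contributions. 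Parts (a), (b), (c) become counting statements about the surviving factors, while (d) is a direct Newton-polygon estimate on $F_B$ itself.

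For (a), algebraic integrality of $\alpha_i, \beta_j$ follows from $L(B, T) \in 1 + T\bb Z_q[\pi][[T]]$ by clearing denominators in the reduced form, and the $q$-relation $\beta_j = q^{m_j}\alpha_{k_j}$ is precisely part (2) of Lemma \ref{L: BombieriInj}. For (b), I would propagate the Dwork-space hypothesis $A_{i,j} \in L(b; s(j)/e)$ through $B = A^{\sigma^{a-1}}(t^{p^{a-1}})\cdots A(t)$ and then through the matrix $F_B$, producing a Newton-polygon bound of the form $ord_q \lambda_i \geq b(p-1) w(u_i) + s(v_i)/e$ keyed to lattice points $u_i, v_i \in M(\Gamma)$. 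Under the chain-multiplicity criterion above, the only $\lambda_i$ that can contribute to $R - S$ are the "fresh" base points with $w(u_i) < 1/(b(p-1))$; counting such lattice points in $M(\Gamma)$ against $\dim(B)$ row indices produces $R - S \leq (1/(b(p-1)))^s \cdot s!\, vol(\Gamma)\dim(B)$. When $\tilde s < s$, the surviving chains lie in an $\tilde s$-dimensional sublattice, so the extra $(s - \tilde s)$ applications of $\delta$ telescope completely and force $R = S$.

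For (c), one must count all factors in the reduced form, not just the net. Stratifying the $\lambda_i$ by column weight $s(j)/e$ and by valuation bracket relative to the cap $k$, then dyadically decomposing the lattice-point count against $\Gamma$, yields the factor $2^{\tilde s + (1 + 1/\tilde s)(k - \rho)}$ from the dilation volume growth and $(1 + 2^{1 + 1/\tilde s})^\rho$ from the column-weight stratification up to $\rho = \min(s,k)$. For (d), the lower bound $ord_q \geq b(p-1)w(\Gamma) + ord_p(A)$ in fact already holds for every $\lambda_i$ of $\det(1 - F_BT)$: entries $b_{qu-v}$ of $F_B$ with $v \neq 0$ carry divisibility $\geq b(p-1)w(v) \geq b(p-1) w(\Gamma)$ from the overconvergent estimate (using the hypothesis $b(p-1) \leq 1$ to avoid losing any of this weight to the $q$-shift), while the column weight $s(v)/e$ supplies the $ord_p(A)$ baseline. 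The Fredholm expansion transfers the estimate from entries to eigenvalues, and hence to $\alpha_i, \beta_j$ via (a).

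The main obstacle will be making the constants in (b) and especially (c) sharp. The qualitative picture—rationality is equivalent to $q$-chain organization, and the net degree is measured by a count of low-weight lattice base points—is clean, but the interplay of the two weight functions $w$ and $s(\cdot)/e$ in the Newton polygon of $F_B$, together with the combinatorics of how $\delta^s$ telescopes infinite chains whose multiplicities grow polynomially, requires careful bookkeeping in (c). The case $\tilde s < s$ in (b) additionally needs an explicit argument that the base points project onto the linear span of $\Gamma$ rather than overflowing into its orthogonal complement.
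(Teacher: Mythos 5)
Your high-level skeleton --- Dwork's trace formula (\ref{E: Dworktrace}) plus the Bombieri $q$-chain analysis of Lemma \ref{L: BombieriInj} plus Newton-polygon lower bounds for $F_B$ --- is the same as the paper's, and part (a) is handled correctly. But there are concrete gaps in (b), (c) and especially (d).

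For (d), your central claim --- that the bound $ord_q \geq b(p-1)w(\Gamma) + ord_p(A)$ already holds for every reciprocal zero of $\det(1-F_BT)$ --- is false. The block of $F_B$ indexed by $u=v=0$ is $b_0$, so $\mathrm{Tr}(F_B)$ contains the constant term of $B$ and $\det(1-F_BT)$ generically has a unit reciprocal zero; moreover $w(\Gamma)$ is defined as the minimum of $w$ over $M(\Gamma)\cap\bb Z_{\geq 1}^s$, so $w(v)\geq w(\Gamma)$ fails for nonzero $v$ lying on coordinate hyperplanes. The divisibility for the affine $L$-function instead comes from the toric decomposition $L(B,\bb A^s/\bb F_q,T)=\prod_{C}\det(1-q^{s-|C|}F_{B^C}T)^{(-1)^{s-|C|}}$, where the explicit power $q^{s-|C|}$ compensates for the missing weight, combined with the inequality $\min_{u\in E^C}w(u)\geq w(\Gamma)-(s-|C|)$ (this is where $b(p-1)\leq 1$ enters, to ensure $s-|C|\geq b(p-1)(s-|C|)$); the estimate is then transferred from the sums $S_k(B)$ to the zeros and poles by Ax's argument. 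Without this inclusion--exclusion step your proof of (d) does not go through. For (b), your ``fresh base points with $w(u_i)<1/(b(p-1))$'' count does not produce the stated constant: the number of lattice points of $M(\Gamma)$ of weight $<c$ grows like $c^{\tilde s}\,vol(\Gamma)$, not $c^s\,s!\,vol(\Gamma)$, and the factor $s!$ in the bound actually arises from the multiplicities $c(m)=\binom{m+s-1}{s-1}$ in the factors $H(\gamma)$; the correct mechanism is Bombieri's comparison of the leading coefficients of two degree-$(s+1)$ polynomial asymptotics, namely $\frac{R-S}{(s+1)!}x^{s+1}$ against $\frac{\dim(B)\,vol(\Gamma)}{(\tilde s+1)(b(p-1))^{\tilde s}}x^{\tilde s+1}$, the latter computed from the counting function $W(j)$ via Ehrhart-type estimates. (This comparison also immediately gives $R=S$ when $\tilde s<s$, without any separate projection argument.) For (c) you concede the constants are not derived; the paper obtains them from the rationality of the Poincar\'e series $\sum_N W(N)T^N = Q(T)/(1-T^{db(p-1)})^{\tilde s}$ with $Q(1)=\dim(B)\,\tilde s!\,vol(\Gamma)$, fed into the Adolphson--Sperber total-degree machinery; a dyadic stratification alone will not reproduce the stated exponents.
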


\begin{proof}[Proof of Theorem \ref{T: MainDworkTheorem}, part (a)]
That the reciprocal roots are algebraic integers follows directly from Dwork's argument found in Bombieri's paper \cite[Section 4, p.82]{Bombieri-exponentialsumsin-1966}. The second part follows from Lemma \ref{L: BombieriInj}.
\end{proof}

The proof of parts {\it (b)}, {\it (c)}, and {\it (d)} will require various lemmas. We begin by obtaining a lower bound on the $q$-adic Newton polygon of $det(1 - F_B T)$. With this purpose in mind, we define a $p$-adic Banach space $C(b, I)$ over the field $K_0 := \bb Q_p(\tilde \pi)$, having orthonormal basis $\{ \gamma_b^{w(u)} t^u e_i \}_{u \in M(\Gamma), i \in I}$ where $ord_p(\gamma_b) = b$. Thus
\[
C(b, I) := \left\{ \xi = \sum_{i \in I, u \in M(\Gamma)} c(u, i) \gamma_b^{w(u)} t^u e_i \mid c(u, i) \in K_0, c(u, i) \rightarrow 0 \text{ as } (u, i) \rightarrow \infty \right\}.
\]
As usual, the norm on $C(b, I)$ is given by $| \xi | := \sup\{ |c(u, i)| : (u, i) \in M(\Gamma) \times I \}$. The map $\psi_p \circ A(t)$ acts on $\xi \in C(b, I)$ via (recall the notation $a_v(i, j)$ from the sentence before equation (\ref{E: uniform overconvergent}))
\[
(\psi_p \circ A(t)) \xi = \sum_{i \in I, \ell \in M(\Gamma)} \left( \sum_{j \in I, u + v = p \ell} a_v(i, j) c(u, j) \gamma_b^{w(u) - w(\ell)} \right) \gamma_b^{w(\ell)} t^\ell e_i.
\]
Then $\Phi_A := \sigma^{-1} \circ \psi_p \circ A(t)$ is a completely continuous endomorphism of $C(b, I)$ over $K_0$, semi-linear with respect to $\sigma^{-1}$ over $K$. The map $\Phi_B := \psi_q \circ B(t)$ is a completely continuous endomorphism of $C(b, I)$ over $K$ satisfying $\Phi_B = \Phi_A^a$. Let $\c B := \{ t^u e_i \mid (u, i) \in M(\Gamma) \times I \}$. Then the matrix of $\Phi_B$ computed with respect to the basis $\c B$ is $F_B$. As is well-known, a completely continuous endomorphism has a well-defined Fredholm determinant $det_{K}(1 - \Phi_B T)$. This may be computed using the matrix of $\Phi_B$ with respect to $\c B$.

Let $\{\eta_1, \ldots, \eta_a\} \subset \bb Z_q$ be a lifting of a basis of $\bb F_q$ over $\bb F_p$. Then $\{ \eta_j\}_{j=1}^a$ is a basis of $\bb Q_q$ over $\bb Q_p$ such that for every $\xi \in \bb Q_q$, writing $\xi = \sum_{j=1}^a \xi_j \eta_j$ with $\xi_j \in \bb Q_p$, we have $ord_p(\xi) = \min \{ ord_p(\xi_j) \}$.  The Fredholm determinant of $\Phi_A$ as a completely continuous endomorphism of the $K_0$-space $C(b, I)$ may be calculated from the matrix of $\Phi_A$ with respect to the basis $\c B' := \{ \eta_j t^u e_i : 1 \leq j \leq a, u \in M(\Gamma), i \in I\}$. The relation the between $det_{K_0}(1 - \Phi_A T)$ and $det_{K}(1 - \Phi_B T)$ is given by the following lemma.

\begin{lemma}(cf. \cite[Lemma 7.1]{Dwork-zetafunctionof-1964})\label{L: NPrelation1}
Taking the graph of the $p$-adic Newton polygon of $det_{K_0}(1 - \Phi_A T)$ and rescaling the abscissa and ordinate by $1/a$ yields the $q$-adic Newton polygon of $det_{K}(1 - \Phi_B T)$.
\end{lemma}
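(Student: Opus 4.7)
The plan is to reduce the statement to the formal determinant identity
\[
\det_{K_0}(1 - \Phi_A T \mid C(b, I)) = \det_K(1 - \Phi_B T^a \mid C(b, I))
\]
and then read off the Newton polygon rescaling directly from it. First I would verify that $\Phi_A$ is indeed completely continuous and $K_0$-linear on $C(b,I)$ (the $K_0$-linearity coming from $\sigma$ fixing $K_0$, the complete continuity from the overconvergence hypothesis on $A$), and that $\Phi_B = \Phi_A^a$ is $K$-linear. I would also use that $\{\eta_1,\ldots,\eta_a\}$ is a $K_0$-orthonormal basis of $K$, so the $K_0$-Fredholm determinant on $\mathcal{B}'$ and the $K$-Fredholm determinant on $\mathcal{B}$ are both well-defined and computed from genuine orthonormal bases.

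Next I would establish the identity. In the finite-dimensional analogue, for any $\sigma^{-1}$-semilinear endomorphism $\alpha$ of a finite-dimensional $K$-space $V$, viewed also as a $K_0$-space $V_0$ of dimension $a\dim_K V$, the matrix of $\alpha$ with respect to a $K_0$-basis of the form $\{\eta_j v_i\}$ has a specific block structure in terms of the matrix of $\alpha$ with respect to $\{v_i\}$. A direct semi-linear algebra computation, using the semi-linearity identity $\Phi_A(\eta_j v) = \sigma^{-1}(\eta_j)\Phi_A(v)$, then yields $\det_{K_0}(1 - \alpha T \mid V_0) = \det_K(1 - \alpha^a T^a \mid V)$. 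For our completely continuous setting the same identity holds; this extension via Serre's Fredholm theory for $p$-adic Banach spaces is exactly the content of the cited Dwork Lemma 7.1, so I would invoke that result directly rather than reprove it.

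Finally, I would translate the identity into the Newton polygon statement. Writing $\det_K(1 - \Phi_B T) = \prod_j (1 - \beta_j T)$, the identity becomes
\[
\det_{K_0}(1 - \Phi_A T) = \prod_j \prod_{k=0}^{a-1}\bigl(1 - \zeta_a^k \beta_j^{1/a} T\bigr),
\]
so the reciprocal roots of $\det_{K_0}(1 - \Phi_A T)$ are, for each $j$, a block of $a$ elements each with $p$-adic order $(1/a)\,\mathrm{ord}_p \beta_j = \mathrm{ord}_q \beta_j$. Sorting yields blocks of $a$ equal slopes in the $p$-adic Newton polygon; the vertex at abscissa $ak$ is $\bigl(ak,\ \sum_{j \leq k}\mathrm{ord}_p \beta_j\bigr)$, and rescaling both coordinates by $1/a$ produces $\bigl(k,\ \sum_{j \leq k}\mathrm{ord}_q \beta_j\bigr)$, the vertex at abscissa $k$ of the $q$-adic Newton polygon of $\det_K(1 - \Phi_B T)$.

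The main obstacle is a careful treatment of the determinant identity in the infinite-dimensional completely continuous setting, but this is precisely what Dwork proved in the reference cited, so the burden of technical work is absorbed into that citation, leaving only the clean semi-linear algebra and the bookkeeping of slopes.
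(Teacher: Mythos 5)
Your argument is correct, but it pivots on a different (and strictly stronger) identity than the one the paper uses. You take as your key input the exact semilinear Fredholm determinant identity $\det_{K_0}(1-\Phi_A T)=\det_{K}(1-\Phi_B T^a)$; granting that, the rescaling statement is immediate, exactly as you compute (each reciprocal root $\beta_j$ of $\det_K(1-\Phi_B T)$ contributes a block of $a$ reciprocal roots of $p$-adic order $\frac{1}{a}\,ord_p\beta_j=ord_q\beta_j$, so slopes are preserved and horizontal lengths are multiplied by $a$). The identity itself is correct — your finite-dimensional block-cyclic computation extends to the completely continuous setting by Serre's Fredholm theory, and you also implicitly need that theory to write $\det_K(1-\Phi_B T)=\prod_j(1-\beta_j T)$ with $\beta_j\to 0$ and to sort slopes, which is worth saying explicitly. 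The paper instead deliberately avoids invoking the identity in this sharp form: it uses only the two more elementary facts $\det_{K_0}(1-\Phi_B T)=Norm_{K/K_0}\det_K(1-\Phi_B T)$ and $\det_{K_0}(1-\Phi_A^a T^a)=\prod_{\zeta^a=1}\det_{K_0}(1-\zeta\Phi_A T)$, equates the two resulting expressions for $\det_{K_0}(1-\Phi_B T^a)$, and then recovers the Newton polygon by counting, on both sides, reciprocal roots of each fixed $p$-adic order (the twists by $\sigma$ and by $\zeta$ being invisible to valuations, since $\sigma$ fixes $\tilde\pi$ and $K/K_0$ is unramified). So your version buys a one-line deduction at the price of leaning on the full strength of the semilinear determinant identity as a black box, while the paper's version uses weaker inputs and pays with an extra slope-counting step; both are valid, and your route has the incidental advantage of determining $\det_{K_0}(1-\Phi_A T)$ exactly rather than only its Newton polygon.
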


\begin{proof}
For convenience, write $G(T) := det_{K}(1 - \Phi_B T)$. Then
\begin{align*}
det_{K_0}(1 - \Phi_B T) &= Norm_{K / K_0} G(T) \\
&= G^{\sigma^{a-1}}(T) \cdots G^\sigma(T) G(T).
\end{align*}
Next,
\begin{align*}
det_{K_0}(1 - \Phi_B T^a) &= det_{K_0}(1 - \Phi_A^a T^a) \\
&= \prod_{\zeta^a = 1} det_{K_0}(1 - \zeta \Phi_A T).
\end{align*}
Thus,
\begin{equation}\label{E: Grelation}
G^{\sigma^{a-1}}(T^a) \cdots G^\sigma(T^a) G(T^a) = \prod_{\zeta^a = 1} det_{K_0}(1 - \zeta \Phi_A T).
\end{equation}
Let $N$ denote the number of reciprocal roots of $G(T)$ with slope $ord_q = m$, which means $ord_p = am$. Then the lefthand-side of (\ref{E: Grelation}) has $(aN)^a$ number of reciprocal roots of $ord_p = m$. Consequently, since $\zeta$ does not affect the Newton polygon of $det_{K_0}(1 - \zeta \Phi_A T)$, we see that $det_{K_0}(1 - \Phi_A T)$ has $a N$ reciprocal zeros of $ord_p = m$. The lemma follows.
\end{proof}

We proceed now to an estimate for the $p$-adic Newton polygon of $det_{K_0}(1 - \Phi_A T)$. Recall,  $A(t) = (A_{i,j})$ satisfies $A_{i,j} \in L(b; s(j)/e)$ for every $j$, with $b$ a positive rational number. Let $d$ be the smallest positive integer such that $b(p-1)w(u) + s(i)/e \in \frac{1}{d} \bb Z$ for all $u \in M(\Gamma)$ and $i \in I$.  Note, this means $db(p-1)$ and $d s(i)/e$ are nonnegative integers for all $i \in I$. Define
\[
W(j) := \# \left\{ (u,i) \in M(\Gamma) \times I \mid b(p-1) w(u) + \frac{s(i)}{e} = \frac{j}{d} \right\}.
\]

\begin{lemma}\label{L: NPrelation}
The $p$-adic Newton polygon of $det_{K_0}(1 - \Phi_A T)$ lies on or above the lower convex hull in $\bb R^2$ of the points
\[
(0,0) \quad \text{and} \quad \left( a \sum_{j=0}^n W(j), \frac{a}{d} \sum_{j=0}^n j W(j) \right) \quad n=0, 1, 2, \ldots.
\]
Consequently, from Lemma \ref{L: NPrelation1}, the $q$-adic Newton polygon of $det(1 - F_B T)$ lies on or above the lower convex hull in $\bb R^2$ of the points
\begin{equation}\label{E: NPlowerbound}
(0,0) \quad \text{and} \quad \left( \sum_{j=0}^n W(j), \frac{1}{d} \sum_{j=0}^n j W(j) \right) \quad n=0, 1, 2, \ldots.
\end{equation}
\end{lemma}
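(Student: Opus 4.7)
The strategy is to estimate the entries of the matrix $M$ of $\Phi_A$ in the orthonormal basis $\{\eta_k \gamma_b^{w(u)} t^u e_j : 1 \le k \le a,\ u \in M(\Gamma),\ j \in I\}$ of the $K_0$-Banach space $C(b, I)$, derive a lower bound on the $p$-adic Newton polygon of $det_{K_0}(1 - \Phi_A T)$ from the Fredholm determinant expansion as a sum of principal minors, and then transfer to the $q$-adic Newton polygon of $det(1 - F_B T)$ via Lemma \ref{L: NPrelation1}.

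First I would compute the action of $\Phi_A = \sigma^{-1} \circ \psi_p \circ A(t)$ on a basis element $\eta_k \gamma_b^{w(u)} t^u e_j$. Since $\psi_p(A(t)(t^u e_j)) = \sum_{\ell,i} a_{p\ell - u}(i,j)\, t^\ell e_i$, the matrix entry from column $(k,u,j)$ to row $(k',\ell,i)$ has the form $\xi_{k'} \gamma_b^{w(u) - w(\ell)}$, where $\xi_{k'} \in \bb Z_p[\tilde\pi]$ is the coefficient of $\eta_{k'}$ in the expansion of $\sigma^{-1}(\eta_k\, a_{p\ell - u}(i,j)) \in \bb Z_q[\tilde\pi]$ relative to $\{\eta_1, \ldots, \eta_a\}$. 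Because each $\eta_k$ is a $p$-adic unit and the $\{\eta_k\}$ form an orthonormal basis of $\bb Q_q / \bb Q_p$, one has $ord_p(\xi_{k'}) \ge ord_p(a_{p\ell-u}(i,j))$. The hypothesis $A_{i,j} \in L(b; s(j)/e)$ gives $ord_p(a_{p\ell - u}(i,j)) \ge b w(p\ell - u) + s(j)/e$, and the subadditivity of the polyhedral weight $w$ on $Cone(\Gamma)$ (which gives $w(p\ell - u) \ge p w(\ell) - w(u)$ by combining $w(p\ell) \le w(u) + w(p\ell - u)$ with the positive homogeneity $w(p\ell) = p w(\ell)$) then yields
\[
ord_p\bigl( M_{(k',\ell,i),\,(k,u,j)} \bigr) \;\ge\; b(p-1) w(\ell) + s(j)/e.
\]

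Next, the Fredholm determinant expansion identifies the coefficient of $T^k$ in $det_{K_0}(1 - \Phi_A T)$, up to sign, with the sum of $k \times k$ principal minors of $M$. For a principal minor indexed by a subset $S = \{(k_m, \ell_m, i_m)\}_{m=1}^{k}$, each Leibniz term $\prod_m M_{s_m,\, s_{\tau(m)}}$ has valuation at least $\sum_m [b(p-1)w(\ell_m) + s(i_{\tau(m)})/e]$, which, because $\tau \in S_k$ merely permutes the second coordinates, equals $\sum_{(k,\ell,i)\in S} [b(p-1)w(\ell) + s(i)/e]$. Minimizing over $k$-subsets, the $k$th coefficient has valuation at least the sum of the $k$ smallest values of $b(p-1) w(u) + s(i)/e$. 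Since in the orthonormal basis each weight value $j/d$ occurs with multiplicity $a W(j)$ (the factor $a$ coming from the $\eta_k$), the $p$-adic Newton polygon of $det_{K_0}(1 - \Phi_A T)$ lies on or above the lower convex hull of the points $\bigl(a\sum_{j=0}^n W(j),\ \tfrac{a}{d} \sum_{j=0}^n j W(j)\bigr)$. Lemma \ref{L: NPrelation1} then rescales both axes by $1/a$ to produce the asserted $q$-adic estimate for $det(1 - F_B T)$.

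The conceptual hurdle worth flagging is that the entry bound depends on the row weight $w(\ell)$ rather than on the column weight $w(u)$, which would at first glance be inadequate for a Hadamard/column-norm argument. The principal minor formulation bypasses this: row and column indices are both drawn from the same subset $S$, and the row-dependent weights $b(p-1) w(\ell_m)$ aggregate over $S$ to produce precisely the desired polygon without any column-norm argument being needed.
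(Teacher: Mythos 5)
Your proposal is correct and follows essentially the same route as the paper: bound the valuations of the matrix entries of $\Phi_A$ in the $\eta_k$-basis using $A_{i,j}\in L(b;s(j)/e)$ and the inequality $w(p\ell-u)\ge pw(\ell)-w(u)$, then let the permutation in the Leibniz expansion of the Fredholm coefficients telescope the row- and column-dependent contributions over the same index set. The only cosmetic difference is that you absorb the factor $\gamma_b^{w(u)}$ into the basis so the bound $b(p-1)w(\ell)+s(j)/e$ appears entry-by-entry, whereas the paper works with the unnormalized basis and performs the identical telescoping inside the sum over the permutation.
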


\begin{proof}

Write $A(t) = \sum_{u \in M(\Gamma)} a_u t^u$, where each $a_u = (a_u(i,j) )_{i, j \in I}$ is a matrix. For each basis element $e_i$, write $a_u e_i = \sum_{j \in I} a_u(i, j) e_j$ with $a_u(i, j) \in K$.  We now compute the matrix of $\Phi_A$ with respect to the basis $\c B'$. For $\eta_l t^u e_i \in \c B'$,
\begin{align*}
\Phi_A( \eta_l t^u e_i) &= \sigma^{-1} \circ \psi_p\left( A(t) \eta_l t^u e_i \right) \\
&= \sigma^{-1}(\eta_l) \cdot \psi_p\left( \sum_{v \in M(\Gamma)} \sum_{j \in I} a_v(i, j) t^{v+u} e_j \right) \\
&= \sigma^{-1}(\eta_l) \cdot \psi_p\left( \sum_{v \in M(\Gamma)} \left( \sum_{j \in I} a_v(i, j) e_j \right) t^{u+v} \right) \\
&= \sigma^{-1}(\eta_l) \cdot \psi_p\left( \sum_{r \in M(\Gamma)} \left( \sum_{j \in I} a_{r-u}(i, j) \right) e_j t^r \right) \\
&= \sigma^{-1}(\eta_l) \cdot \sum_{r \in M(\Gamma)} \sum_{j \in I} a_{pr-u}(i,j) e_j t^r.
\end{align*}
For each $i$ and $j$, write $a_u(i, j) = \sum_{k=1}^a a_u(i, j; k) \eta_k$, with $a_u(i, j; k) \in K_0$. Continuing the above calculation,
\[
\Phi_A( \eta_l t^u e_i) = \sigma^{-1}(\eta_l) \cdot \sum_{r \in M(\Gamma)} \sum_{j \in I} \sum_{k=1}^a a_{pr-u}(i, j; k) \eta_k t^r e_j.
\]
Next, for each $l$ and $k$, write $\sigma^{-1}(\eta_l) \eta_k = \sum_{m=1}^a b(l, k; m) \eta_m$ with $b(l, m; k) \in \bb Z_p$. Then, continuing the calculation
\[
\Phi_A( \eta_l t^u e_i) = \sum_{r \in M(\Gamma)} \sum_{j \in I} \sum_{k, m = 1}^a a_{pr-u}(i, j; k) \ b(l, k; m) \eta_m t^r e_j.
\]
Hence,
\[
\text{Matrix of $\Phi_A$ with respect to the basis $\c B'$ is }  \left( \sum_{k=1}^a a_{pr-u}(i, j; k)  \ b(l, k; m) \right)_{(l, u, i), (m, r, j)}.
\]
Set $d(l, u, i; m, r, j) := \sum_{k=1}^a a_{pr-u}(i, j; k) b(l, k; m)$. Then
\[
det_{K_0}(1 - \Phi_A T) = \sum_{m=0}^\infty c_m T^m
\]
where
\[
c_m := (-1)^m \sum \sum_{\tau \in S_m} sgn(\tau) \prod_{z=1}^m d(l^{(z)}, u^{(z)}, i^{(z)} ; l^{(\tau(z))}, u^{(\tau(z))}, i^{(\tau(z))})
\]
where the first sum runs over all $m$ number of triples $(l^{(1)}, u^{(1)}, i^{(1)}) , \ldots, (l^{(m)}, u^{(m)}, i^{(m)})$ of distinct elements of the set $\{1, 2, \ldots, a\} \times M(\Gamma) \times I$, and $S_m$ is the symmetric group on the letters $\{1, 2, \ldots, m\}$. Recall that $ord_p( a_u(i, j) ) \geq b w_\Gamma(u) + \frac{s(i)}{e}$, and so by construction of the basis $\{ \eta_j \}$, the same holds true for each $a_u(i, j; k)$. Since $ord_p( b(l, k; m) ) \geq 0$, we have
\begin{align*}
ord_p(c_m) &\geq \min_{\text{distinct } (l, u, i)} \min_{\tau \in S_m} \sum_{z=1}^m \left( b w_\Gamma(p u^{(\tau(z))} - u^{(z)}) + \frac{s(i^{\tau(z)})}{e} \right) \\
&\geq \min_{\text{distinct } (l, u, i)} \left\{ \sum_{z=1}^m \left( b(p-1) w_\Gamma(u^{(z)}) + \frac{s(i^{(z)})}{e} \right) \right\}.
\end{align*}
It follows that the $p$-adic Newton polygon of $det_{K_0}(1 - \Phi_A T)$ lies on or above the lower convex hull in $\bb R^2$ of the points
\[
(0,0) \quad \text{and} \quad \left( a \sum_{j=0}^n W(j), \frac{a}{d} \sum_{j=0}^n j W(j) \right) \quad n=0, 1, 2, \ldots.
\]\
\end{proof}

\begin{proof}[Proof of Theorem \ref{T: MainDworkTheorem}, part (b)]
Bombieri's argument \cite[Section IV]{Bombieri-exponentialsumsin-1966} demonstrates that using the lower bound (\ref{E: NPlowerbound}) and the Dwork trace formula (\ref{E: Dworktrace}), one may obtain the inequality
\[
\frac{deg \> L(B, \bb G_m^s, T)^{(-1)^{s+1}}}{(s+1)!} x^{s+1} + O(x^s) \leq \sum_{\frac{j}{d} \leq x} \left( x - \frac{j}{d} \right) W(j).
\]
As we show below, the righthand side may be asymptotically approximated by
\begin{equation}\label{E: asympEst}
\sum_{\frac{j}{d} \leq x} \left(x - \frac{j}{d} \right) W(j) = \frac{dim(B) \> vol(\Gamma)}{(\tilde s + 1)(b(p-1))^{\tilde s}} x^{\tilde s + 1} + O(x^{\tilde s}),
\end{equation}
which proves the result since $\tilde s \leq s$. The proof of this estimate is a modification of an argument of \cite[\S 4]{AdolpSperb-Newtonpolyhedraand-1987}, thus we will only provide the relevant parts. First, we note that we may write
\begin{equation}\label{E: sillysum}
W(j) = \sum_{i \in I} W_i(j)
\end{equation}
where
\[
W_i(j) := \# \{ u \in M(\Gamma) \mid b(p-1) w(u) + s(i) / e = j/d \}.
\]
For convenience, set $b' := b(p-1)$ and $c_i := s(i)/e$. Recall that $w(M(\Gamma)) \subset (1/D) \bb Z_{\geq 0}$. Now, if $u$ satisfies $b' w(u) + c_i = j/d$ then there exists a nonnegative integer $j'$ such that $w(u) = j'/D$ such that $b' j' / D + c_i = j/d$. Hence, for each fixed $i$,
\begin{align*}
\sum_{\frac{j}{d} \leq x} W_i(j) &= \sum_{\frac{j'}{D} \leq \frac{x-c_i}{b'}} \#\{ u \in M(\Gamma) \mid w(u) = j'/D \} \\
&= vol(\Gamma) \left( \frac{x}{b'} \right)^{\tilde s} + O(x^{\tilde s-1}),
\end{align*}
where we have used the argument in \cite[\S 4]{AdolpSperb-Newtonpolyhedraand-1987} for the second equality. A similar calculation gives
\begin{align*}
\sum_{\frac{j}{d} \leq x} \left( \frac{j}{d} \right) W_i(j) &= \sum_{\frac{j'}{D} \leq \frac{x-c_i}{b'}} \left( \frac{b' j'}{D} + c_i \right) \#\{ u \in M(\Gamma) \mid w(u) = j'/D \} \\
&= \frac{\tilde s \> vol(\Gamma)}{\tilde s + 1} \left( \frac{x}{b'} \right)^{\tilde s + 1} + O(x^{\tilde s}).
\end{align*}
Then, (\ref{E: asympEst}) follows by combining these estimates with (\ref{E: sillysum}).
\end{proof}

We now move on to an upper bound for the total degree. Like the degree, the proof is a modification of an argument of Bombieri's \cite{Bombieri-exponentialsumsin-1978}. However, we will follow the argument in \cite{AdolpSperb-Newtonpolyhedraand-1987} since it applies to growth conditions defined by polytopes.

\begin{proof}[Proof of Theorem \ref{T: MainDworkTheorem}, part (c)] In fact, we prove the slightly stronger inequality
\begin{equation}\label{E: slight}
R + S \leq
dim(B) \cdot \tilde s! \> vol(\Gamma) \cdot 2^{\tilde s - \frac{1}{\tilde s d b(p-1)} + \frac{1}{b(p-1)}(1 + \frac{1}{\tilde s})(k - \rho)} ( 1 + 2^{\frac{1}{b(p-1)}(1 + \frac{1}{\tilde s})})^\rho.
\end{equation}
Inequality (\ref{E: slight}) follows from an almost identical argument to that in \cite[\S 5]{AdolpSperb-Newtonpolyhedraand-1987} once one has the following rationality result of the Poincar\'e series $\sum_{N=0}^\infty W(N) T^N$:
\begin{equation}\label{E: PoincSer}
\sum_{N=0}^\infty W(N) T^N = \frac{Q(T)}{(1 - T^{db(p-1)})^{\tilde s}},
\end{equation}
where $Q(T)$ is a polynomial with nonnegative integer coefficients and special value $Q(1) = dim(B) \cdot {\tilde s}! \> vol(\Gamma)$.
We prove this as follows. Define
\[
W'(N) := \# \{ u \in M(\Gamma) \mid w(u) = N/D \}.
\]
The Poincar\'e series of this, by \cite[Lemma 5.1]{AdolpSperb-Newtonpolyhedraand-1987}, satisfies
\[
\sum_{N' = 0}^\infty W'(N') T^{N'} = \frac{P(T)}{(1 - T^D)^{\tilde s}}
\]
with $P(T)$ a polynomial in nonnegative integral coefficients, degree at most $\tilde s D$, and special value $P(1) = \tilde s! \> vol(\Gamma)$.
Using (\ref{E: sillysum}), observe that
\[
\sum_{N=0}^\infty W(N) T^N = \sum_{i \in I} \sum_{N=0}^\infty W_i(N) T^N.
\]
For convenience, set $b' := b(p-1)$ and $c_i := s(i) / e$. Notice that $W_i(N)$ is a positive integer if and only if there is a $u \in M(\Gamma)$ that satisfies $b' w(u) + c_i = N/d$. By defining the integer $N'$ by $w(u) = N'/D$, we see that $W_i(N) = W'(N')$ when $\frac{N}{d} = b' \cdot \frac{N'}{D} + c_i$. Thus,
\begin{align*}
\sum_{i \in I} \sum_{N=0}^\infty W_i(N) T^N &= \sum_{i \in I} \sum_{N' = 0}^\infty W'(N') T^{d( (b'N'/d) + c_i)} \\
&=  \frac{P(T) \sum_{i \in I} T^{dc_i}}{(1 - T^{db'})^{\tilde s}}.
\end{align*}
Note, by the discussion before Lemma \ref{L: NPrelation}, $db'$ and $dc_i$ are nonnegative integers. Denoting the numerator of this quotient by $Q(T)$ proves (\ref{E: PoincSer}).
\end{proof}

We remark that a more modern and accessible presentation of Ehrhart's theory on polytopes (e.g. rationality of the Poincar\'e series mentioned in the above proof) may be found in \cite[Chapter 3]{Beck}.

Assume now that $\Gamma$ lies entirely in $\bb R_{\geq 0}^s$ so that the $L$-function may be defined over affine $s$-space $\bb A^s$. Define
\[
S(B) := \sum_{\bar t \in \bb F_q^s} Tr( B(t) ),
\]
where $t$ is an $s$-tuple in $\bb Z_q^s$ whose coordinates are the Teichm\"uller representative of $\bar t$ (or zero).

\begin{theorem}\label{T: GenExpSumEst}
Suppose $\Gamma$ lies entirely in $\bb R_{\geq 0}^s$. Here $A$ may be infinite dimensional. Assume $b(p-1) \leq 1$. Then
\begin{equation}\label{E: CW}
ord_q \> S(B) \geq b(p-1) w(\Gamma) + ord_p(A).
\end{equation}
A lower bound is also given if $b(p-1) > 1$.
\end{theorem}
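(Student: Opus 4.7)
My plan is to identify $S(B)$ with the coefficient of $T$ in the power series $\log L(B, \bb A^s/\bb F_q, T)$, and then apply part (d) of Theorem~\ref{T: MainDworkTheorem} on the reciprocal roots of this $L$-function.

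First I would expand the Euler product defining $L(B, \bb A^s/\bb F_q, T)$ and take the formal logarithm:
\[
\log L(B,\bb A^s/\bb F_q, T) = \sum_{\bar t \in |\bb A^s/\bb F_q|} \sum_{k=1}^\infty \frac{1}{k} Tr\!\left(B(\hat t^{q^{deg\bar t - 1}}) \cdots B(\hat t)\right)^{\!k} T^{k\cdot deg \bar t}.
\]
Only degree-one closed points (i.e., elements of $\bb F_q^s$) with $k=1$ contribute to the coefficient of $T$, and that contribution is precisely $\sum_{\bar t \in \bb F_q^s} Tr(B(\hat t)) = S(B)$.

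Next, since $L(B,\bb A^s/\bb F_q, T)$ is $p$-adic meromorphic on $\bb C_p$ (by the discussion at the end of Section~\ref{S: sigma-modules} applied in the affine setting), I would write
\[
L(B,\bb A^s/\bb F_q, T)^{(-1)^{s+1}} = \frac{\prod_i (1 - \alpha_i T)}{\prod_j (1 - \beta_j T)}
\]
as a ratio of $p$-adic entire functions. In the possibly infinite-dimensional case, the reciprocal roots $\alpha_i, \beta_j$ tend to $0$ $p$-adically, so $\sum_i \alpha_i$ and $\sum_j \beta_j$ converge $p$-adically. Taking the logarithm of this rational representation and matching the $T^1$ coefficient with the Euler-product computation yields
\[
S(B) = (-1)^s\!\left(\sum_i \alpha_i - \sum_j \beta_j\right).
\]

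Finally, I would invoke part~(d) of Theorem~\ref{T: MainDworkTheorem} (and its analogue for $b(p-1) > 1$) to bound every reciprocal root by $ord_q \alpha_i, ord_q \beta_j \geq b(p-1) w(\Gamma) + ord_p(A)$. The ultrametric inequality, valid also for these $p$-adically convergent infinite sums, then delivers
\[
ord_q S(B) \geq \min_{i,j}\{ord_q \alpha_i,\ ord_q \beta_j\} \geq b(p-1) w(\Gamma) + ord_p(A),
\]
which is the estimate claimed in \eqref{E: CW}.

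The main obstacle is justifying the formal manipulations in the infinite-dimensional case: taking the logarithm of an infinite product of entire functions, matching $T^1$ coefficients term by term, and applying the ultrametric inequality to the infinite sums $\sum_i \alpha_i - \sum_j \beta_j$. All of these are controlled by the fact that the reciprocal roots accumulate at $0$ $p$-adically, which is a consequence of the meromorphy of $L$ guaranteed by the uniform overconvergence condition \eqref{E: uniform overconvergent} on $A$.
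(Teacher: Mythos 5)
Your argument is circular. The only place the paper establishes the slope bound $ord_q\,\alpha_i,\ ord_q\,\beta_j \geq b(p-1)w(\Gamma) + ord_p(A)$ for the reciprocal roots of $L(B,\bb A^s/\bb F_q,T)$ is part~(d) of Theorem~\ref{T: MainDworkTheorem}, and the paper's proof of part~(d) is itself deduced \emph{from} Theorem~\ref{T: GenExpSumEst}: it applies the present theorem to every $S_k(B) := \sum_{\bar t \in (\bb F_{q^k})^s} Tr(B(t))$ to get $ord_q\,S_k(B) \geq k\kappa$, and then converts that family of trace estimates into a bound on the roots using Ax's argument. So you cannot invoke Theorem~\ref{T: MainDworkTheorem}(d) to prove Theorem~\ref{T: GenExpSumEst} without first providing an independent proof of the root bound. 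Moreover, the direction you use (root bound $\Rightarrow$ trace bound via the ultrametric inequality) is the trivial direction; the substantive content of the circle is the Ax-type passage from trace bounds to root bounds, and that depends on having the trace bound first.

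The paper's actual proof goes in the opposite, non-circular direction. It uses the \emph{affine} Dwork trace formula
\[
L(B, \bb A^s / \bb F_q, T) = \prod_{C \subset \{1, \ldots, s\}} \det(1 - q^{\,s-|C|} F_{B^C}\, T)^{(-1)^{s-|C|}},
\]
which yields $S(B) = \sum_C (-1)^{|C|} q^{\,s-|C|}\, Tr(F_{B^C})$, and it estimates each $ord_q(Tr(F_{B^C}))$ directly from a Newton polygon bound \`a la Lemma~\ref{L: NPrelation} restricted to the set $E^C = \{u \in M(\Gamma) : u_i > 0 \text{ for } i \in C\}$. The key combinatorial input, taken from \cite{AdolpSperb-$p$-adicestimatesexponential-1987}, is the inequality $\min_{u \in E^C} w(u) \geq w(\Gamma) - (s-|C|)$, which pairs against the factor $q^{\,s-|C|}$ to deliver the stated lower bound. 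None of this passes through $L$-function roots. If you wanted to proceed along the lines you sketched, you would need to replace your appeal to Theorem~\ref{T: MainDworkTheorem}(d) by a direct Newton-polygon estimate of the Fredholm determinants $\det(1 - q^{\,s-|C|} F_{B^C} T)$; at that point you would essentially have reproduced the paper's argument.
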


\begin{proof}
The underlying idea of the following goes back to Katz \cite{Katz-On_thm_of_Ax}. Write $B(t) = \sum_{u \in M(\Gamma)} b_u t^u$. For $C \subset \{1, 2, \ldots, s\}$, define the matrix $B^C$ by
\[
B^C := \sum_{\substack{ u \in M(\Gamma) \\ u_i > 0 \text{ if } i \in C }} b_u t^u.
\]
We use the following version of the Dwork trace formula \cite[Lemma 4.3]{Wan-p-adic-representation}:
\[
L(B, \bb A^s / \bb F_q, T) = \prod_{C \subset \{1, 2, \ldots, s\}} det(1 - q^{s - |C|} F_{B^C} T)^{(-1)^{s - |C|}}.
\]
Hence,
\[
S(B) = \sum_{C \subset \{1,2, \ldots, s\}} (-1)^{|C|} q^{s-|C|} Tr( F_{B^C} ),
\]
and since $q=p^a$, we have
\[
ord_q \> S(B)  \geq \min_{C \subset \{1, 2, \ldots, s\}} \left\{ s-|C| + ord_q( Tr( F_{B^C} )) \right\}.
\]
We now estimate $ord_q( Tr( F_{B^C} ))$. Let $E^C$ denote the set of $u \in  M(\Gamma)$ with $u_i > 0 $ for all $i \in C$. Define
\begin{equation}\label{E: WC}
W^C(j) := \# \left\{ (u, i) \in E^C \times I \mid b(p-1)w(u) + \frac{s(i)}{e} = \frac{j}{d} \right\}.
\end{equation}
Using a similar argument to Lemma \ref{L: NPrelation}, the $q$-adic Newton polygon of $det_{K}(1 - F_{B^C} T)$ lies on or above the points
\[
(0,0) \quad \text{and} \quad \left( \sum_{j = 0}^k W^C(j), \frac{1}{d} \sum_{j = 0}^k j W^C(j) \right) \qquad k = 0, 1, 2, \ldots
\]
Let $k_0$ be the first nonnegative integer such that $W^C(k_0) \not= 0$. Then the first point away from the origin in the above sequence is $(W^C(k_0), \frac{k_0}{d} W^C(k_0))$, and thus, since $det_{K}(1 - F_{B^C} T) = 1 - Tr( F_{B^C} ) T + O(T^2)$, we obtain the inequality
\[
ord_q( Tr( F_{B^C} ) ) \geq \frac{k_0}{d} = \min_{(u, i) \in E^C \times I} \left\{ b(p-1)w(u) + \frac{s(i)}{e} \right\}.
\]

Now, using \cite[Lemma 4.5]{AdolpSperb-$p$-adicestimatesexponential-1987} for the second inequality below, we see that
\begin{align*}
ord_q \> S(B) &\geq \min_{C \subset \{1, 2, \ldots, s\}} \left\{ s - |C| + \min_{(u, i) \in E^C \times I} \left\{ b(p-1)w(u) + \frac{s(i)}{e} \right\} \right\} \\
&=
\min_{C \subset \{1, 2, \ldots, s\}} \left\{ s - |C| + b(p-1) \min_{u \in E^C} \left\{ w(u) \right\} \right\} + ord_p(A) \\
&\geq
\min_{C \subset \{1, 2, \ldots, s\}} \left\{ s - |C| + b(p-1) \left( w(\Gamma) - (s - |C|) \right) \right\} + ord_p(A) \\
&=
\begin{cases}
b(p-1) w(\Gamma) + ord_p(A) & \text{if } b(p-1) \leq 1 \\
s(1-b(p-1)) + b(p-1) w(\Gamma) + ord_p(A) & \text{if } 1 < b(p-1),
\end{cases}
\end{align*}
taking $|C| = s$ in the former and $|C| = 0$ in the latter.
\end{proof}

\begin{proof}[Proof of Theorem \ref{T: MainDworkTheorem}, part (d)]
For convenience, set $\kappa := b(p-1)w(\Gamma) + ord_p(A)$. With
\[
S_k(B) := \sum_{\bar t \in (\bb F_{q^k})^s} Tr( B(t) ),
\]
it follows from Theorem \ref{T: GenExpSumEst} that $ord_q( S_k(B) ) \geq k \kappa$ for every $k$. Using the same idea as in \cite{Ax-Zeros_of_polynomials_over_finite_fields}, this is equivalent to every reciprocal zero and pole of the $L$-function of $B$ having $p$-adic order at least $\kappa$.
\end{proof}

%%%-----------------------------------------------------------------------
\subsection{Unit root $\sigma$-modules}\label{SS: unit}

Using freely the terminology of \cite{Wan-Dwork'sconjectureunit-1999} and \cite{Wan-Higherrankcase-2000}, we will demonstrate that part {\it (d)} of Theorem \ref{T: MainDworkTheorem} may be applied to obtain a similar result for unit root $\sigma$-modules.  In Section \ref{SS: Unitfam}, we will apply the following result to a particular unit root $L$-function coming from the nondegenerate toric family in Section \ref{S: Toric Family}. Let $(M, \phi)$ be a finite rank, ordinary, nuclear $\sigma$-module. (Here, ordinary means that the fiber-by-fiber Newton polygon of $\phi$ equals the polygon defined as the lower convex hull of the points $( \sum_{i=0}^k h_i, \sum_{i=0}^k i h_i)$ in $\bb R^2$, where $h_i$ was defined in Section \ref{S: sigma-modules}.) We assume there exists a matrix $A(t)$ such that the matrix $B(t)$ of $\phi$ with respect to some orthonormal basis $\c B$ satisfies
\begin{equation}\label{E: Bfactor}
B(t) = A^{\sigma^{a-1}}(t^{p^{a-1}}) \cdots A^\sigma(t^p) A(t)
\end{equation}
with $A(t)$ satisfying (\ref{E: uniform overconvergent}). The latter condition means there exists $b > 0$ such that the entries of $A(t)$ all belong to $L(b)$.  Let $\phi_i$ be the $i$-th slope unit root $\sigma$-module coming from the Hodge-Newton decomposition of $\phi$. Wan's theorem \cite{Wan-Higherrankcase-2000} tells us that the $L$-function of $\phi_i$ is meromorphic:
\[
L(\phi_i, \bb A^s / \bb F_q, T) = \frac{\prod_{i=1}^\infty (1 - \alpha_i T)}{\prod_{j=1}^\infty (1 - \beta_j T)},
\]
with $\alpha_i, \beta_j \rightarrow 0$ as $i, j \rightarrow \infty$.

\begin{theorem}\label{T: unitRoot}
Assume the relative polytope (\ref{E: GammaRelPoly}) satisfies $\Gamma \subset \bb R_{\geq 0}$. Suppose $b(p-1) \leq 1$. Then for all $i$ and $j$, $ord_q \alpha_i$ and $ord_q \beta_j \geq b(p-1) w(\Gamma)$. A lower bound may also be given if $b(p-1) > 1$.
\end{theorem}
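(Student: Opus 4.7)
The plan is to deduce Theorem \ref{T: unitRoot} from Theorem \ref{T: MainDworkTheorem}(d) via Wan's $p$-adic limiting method. The slope-$i$ unit root piece $\phi_i$ of the Hodge--Newton decomposition of the ordinary nuclear $\sigma$-module $(M, \phi)$ is not produced by any single linear algebra operation on $\phi$; rather, following \cite{Wan-Higherrankcase-2000}, it arises as a $p$-adic limit of such objects through Adams operations $\psi^k$, which themselves are $\bb Z$-linear combinations of symmetric, tensor, and exterior powers. One thus obtains a sequence of $L$-functions $L(\psi^{k_n} \phi, \bb A^s / \bb F_q, T)$, each a ratio of $L$-functions of genuine linear algebra operations on $\phi$, converging $p$-adically as meromorphic functions on $\bb C_p$ to an expression from which $L(\phi_i, \bb A^s / \bb F_q, T)$ can be extracted.

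The key structural observation is that each linear algebra operation $\c L$ preserves the hypotheses of Theorem \ref{T: MainDworkTheorem}(d). Given the factorization (\ref{E: Bfactor}) with $A(t)$ satisfying (\ref{E: uniform overconvergent}), the matrix $\c L B(t)$ admits the analogous factorization through a matrix $\c L A(t)$ whose entries are polynomial expressions in those of $A(t)$. Such combinations preserve membership in $L(b)$ with the same $b>0$, and the column-divisibility indices $s(j)/e$ behave additively under tensor, symmetric, and exterior power, so $ord_p(\c L A) \geq 0$ and the uniform overconvergent condition (\ref{E: uniform overconvergent}) is maintained. Hence Theorem \ref{T: MainDworkTheorem}(d) applies uniformly to all such $L(\c L \phi, \bb A^s / \bb F_q, T)$, yielding the lower bound
\[
ord_q \geq b(p-1) w(\Gamma) + ord_p(\c L A) \geq b(p-1) w(\Gamma)
\]
on every reciprocal zero and pole when $b(p-1) \leq 1$; for $b(p-1) > 1$ one invokes instead the alternative lower bound established in the proof of Theorem \ref{T: GenExpSumEst}, which contributes an additional additive correction depending on $s$.

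It remains to transfer the bound to the limit. Because reciprocal roots of $p$-adic meromorphic functions depend continuously on the coefficients of their Fredholm numerator and denominator in a manner compatible with Newton-polygon lower bounds, a uniform lower bound on the $q$-adic order of reciprocal zeros and poles throughout the approximating sequence is inherited by the limit, giving the claimed bound on $\alpha_i$ and $\beta_j$. The main obstacle will be making this limiting step rigorous: one must track the matching of reciprocal roots across the sequence $L(\psi^{k_n} \phi, T)$, ensure that no cancellation in the Adams-operation quotient structure introduces spurious roots of lower $q$-adic order in the limit, and verify that the limit of the nuclear Fredholm determinants genuinely locates the zero/pole divisor of $L(\phi_i, T)$. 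Here the ordinariness hypothesis on $(M, \phi)$ is essential, since it is precisely what makes the Hodge--Newton decomposition globally well-defined and what drives the convergence argument in Wan's original proof.
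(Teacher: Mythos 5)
The overall direction you take—Wan's $p$-adic limiting method via Adams operations—matches the paper's, but you stop short of the mechanism that actually closes the argument, and the substitute you propose in the final paragraph is not sound as stated.

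The paper does not take a ``limit'' of $L$-functions of Adams operations applied to $\phi$ and then transfer bounds by continuity. It cites Wan's Theorem~6.7 of \cite{Wan-Higherrankcase-2000}, which produces a sequence of matrices $B_j(t)$, \emph{not} of the form $\c L A(t)$ for any linear algebra operation $\c L$ but obtained from the Hodge--Newton filtration of $\phi$, such that each $B_j$ admits a factorization $B_j = A_j^{\sigma^{a-1}}(t^{p^{a-1}})\cdots A_j^\sigma(t^p)A_j(t)$ with $A_j$ having entries in $L(b)$ (same $b$, uniform in $j$), $ord_p(A_j)\geq 0$, $\lim_j ord_p(A_j)=\infty$, and such that one has the \emph{exact} infinite product
\[
L(\phi_i,\bb A^s/\bb F_q, T) \;=\; \prod_{j=1}^{\infty} L(B_j,\bb A^s/\bb F_q, T)^{\pm}.
\]
Theorem~\ref{T: MainDworkTheorem}(d) then applies directly to each factor, and because every reciprocal zero or pole of the left side is a reciprocal zero or pole of some factor on the right, the bound $ord_q \geq b(p-1)w(\Gamma)$ propagates with no passage-to-the-limit over meromorphic functions at all.

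The gap in your write-up is precisely the step you flag as ``the main obstacle.'' You propose to resolve it by asserting that reciprocal roots of $p$-adic meromorphic functions depend continuously on the Fredholm numerator and denominator in a way compatible with Newton-polygon lower bounds. This is not a theorem you can invoke: for genuine meromorphic functions, cancellation between numerator and denominator can occur, and there is no general statement that a coefficientwise (or uniform) $p$-adic limit of ratios preserves lower bounds on the $q$-adic order of reciprocal zeros and poles. The reason Wan's method works is that the convergence is organized so that the limit is an honest infinite product whose factors each already satisfy the bound; there is nothing to transfer. Related to this, your ``key structural observation'' that linear algebra operations $\c L$ preserve the hypotheses of Theorem~\ref{T: MainDworkTheorem}(d) is true but beside the point: the $B_j$ in Wan's decomposition are not of the form $\c L B(t)$ for a linear algebra operation $\c L$ applied to the original $\phi$, so the verification you would actually need is that \emph{Wan's} $A_j$ satisfy the hypotheses—which is exactly the content of Wan's Theorem~6.7, not something you can get by inspecting closure of the class $L(b)$ under tensor/symmetric/exterior powers of $A$ itself.
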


\begin{proof}
In the proof of Wan \cite[Theorem 6.7]{Wan-Higherrankcase-2000} it was shown that there exists a sequence of matrices $\{ B_j(t) \}$ with the following properties. There exists matrices $A_j(t)$ for each $j$ with entries in $L(b)$,  $b$ independent of $j$, such that
\[
B_j(t) = A_j^{\sigma^{a-1}}(t^{p^{a-1}}) \cdots A_j^\sigma(t^p) A(t),
\]
with
\[
ord_p(A_j) \geq 0 \text{ for every $j$} \quad \text{and} \quad \lim_{j \rightarrow \infty} ord_p(A_j) = \infty
\]
satisfying
\[
L(\phi_i, \bb A^s/\bb F_q, T) = \prod_{j = 1}^\infty L(B_j, \bb A^s / \bb F_q, T)^{\pm},
\]
where $\pm$ means the factor lies in either the numerator or the denominator. The result now follows by applying part {\it (d)} of Theorem \ref{T: MainDworkTheorem} to each $L$-function in the product.
\end{proof}

When the unit root $\sigma$-module is of rank one, then we may allow the matrix $A(t)$ to have possibly infinite dimension. This is useful in applications, as we will see in Section \ref{SS: Unitfam}.

\begin{theorem}\label{T: unitrankone}
Let $(M, \phi)$ be a possibly infinite rank nuclear $\sigma$-module, ordinary up to and including slope $j$. Suppose there exists $b > 0$ and a matrix $A(t)$ with entries in $L(b)$, $\Gamma \subset \bb R^s_{\geq 0}$,  such that the matrix $B(t)$ of $\phi$ with respect to some orthonormal basis satisfies (\ref{E: Bfactor}). Let $\phi_j$ be the unit root $\sigma$-module coming from the $j$-th slope in the Hodge-Newton decomposition of $\phi$. Suppose $\phi_j$ is of rank one. Then the same result as in Theorem \ref{T: unitRoot} holds for the unit root $L$-function $L(\phi_j, \bb A^s / \bb F_q, T)$ by the proof of \cite[Theorem 8.5]{Wan-Dwork'sconjectureunit-1999}.
\end{theorem}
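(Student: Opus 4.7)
The plan is to follow the rank-one specialization of Wan's limiting strategy used in \cite{Wan-Dwork'sconjectureunit-1999}, and at every stage of that limit apply Theorem \ref{T: MainDworkTheorem}{\it (d)} to recover the bound $b(p-1)w(\Gamma)$. The key observation that makes this strategy viable here is that Theorem \ref{T: MainDworkTheorem}{\it (d)} applies to infinite-dimensional $A$ satisfying (\ref{E: uniform overconvergent}), so one is free to work with nuclear but not necessarily finite-rank approximations.

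First I would recall the construction behind \cite[Theorem 8.5]{Wan-Dwork'sconjectureunit-1999}. In the rank-one case, one can write the unit root $L$-function $L(\phi_j, \bb A^s/\bb F_q, T)$ as a $p$-adic limit
\[
L(\phi_j, \bb A^s/\bb F_q, T) = \lim_{n \to \infty} \prod_{\ell} L(B_{n,\ell}, \bb A^s/\bb F_q, T)^{\pm}
\]
where each $B_{n,\ell}$ is obtained from $B$ by taking symmetric, exterior, and tensor power operations combined with Frobenius twists, together with the Hodge-Newton decomposition data through slope $j$. The rank-one hypothesis on $\phi_j$ is what allows the limit to exist and the $L$-function to be identified as a meromorphic quotient with the prescribed factorization. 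Each $B_{n,\ell}$ is itself a nuclear $\sigma$-module; by construction it factors as
\[
B_{n,\ell}(t) = A_{n,\ell}^{\sigma^{a-1}}(t^{p^{a-1}}) \cdots A_{n,\ell}^{\sigma}(t^p) A_{n,\ell}(t),
\]
with $A_{n,\ell}$ built from $A$ via the same linear-algebra operations and appropriate twists.

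Second, I would verify that the entries of $A_{n,\ell}$ lie in $L(b')$ for some fixed $b' > 0$ depending only on $b$ and on the chosen linear-algebra operations, and that the uniform overconvergent condition (\ref{E: uniform overconvergent}) is preserved (possibly after adjusting $b$); this is where the rank-one hypothesis enters most crucially, since it lets one control the denominators in the Hodge-Newton decomposition. With this in hand, Theorem \ref{T: MainDworkTheorem}{\it (d)} applies to each $L(B_{n,\ell}, \bb A^s/\bb F_q, T)$, yielding the bound
\[
\mathrm{ord}_q(\alpha) \geq b'(p-1) w(\Gamma) + \mathrm{ord}_p(A_{n,\ell})
\]
for every reciprocal zero or pole $\alpha$, and similarly for the $\beta$'s, where $\mathrm{ord}_p(A_{n,\ell}) \geq 0$.

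Third, I would pass to the $p$-adic limit. Since the lower bound $b'(p-1)w(\Gamma)$ is uniform in $n$ and $\ell$, and since the $p$-adic Weierstrass factorization of a meromorphic function is stable under uniform convergence of numerator and denominator on bounded regions, every reciprocal zero $\alpha_i$ and every reciprocal pole $\beta_j$ of $L(\phi_j, \bb A^s/\bb F_q, T)$ inherits the same lower bound. A lower bound when $b(p-1) > 1$ is obtained identically using the corresponding clause of Theorem \ref{T: MainDworkTheorem}{\it (d)} and Theorem \ref{T: GenExpSumEst}. The main obstacle is the second step: confirming that the specific nuclear approximations produced by the proof of \cite[Theorem 8.5]{Wan-Dwork'sconjectureunit-1999} really do admit a factorization $(\ref{E: Bfactor})$ whose $A_{n,\ell}$'s lie in a fixed Dwork space $L(b')$ with the uniform column-weight condition (\ref{E: uniform overconvergent}); everything else is a direct transfer of the argument of Theorem \ref{T: unitRoot} together with Wan's limiting formalism.
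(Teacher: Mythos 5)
Your proposal matches the paper's (implicit) proof: the paper simply invokes the argument of Wan's Theorem 8.5 exactly as you describe --- express $L(\phi_j, \bb A^s / \bb F_q, T)$ via Wan's rank-one limiting decomposition into $L$-functions of nuclear factors built from $A$ by linear-algebra operations and Frobenius twists, verify these stay in a fixed Dwork space satisfying (\ref{E: uniform overconvergent}), apply Theorem \ref{T: MainDworkTheorem}(d) to each, and pass to the limit, just as in the proof of Theorem \ref{T: unitRoot}. The only refinement worth recording is that your $b'$ may be taken equal to $b$, since $L(b;c)\,L(b;c') \subset L(b;c+c')$ shows the spaces $L(b)$ are stable under the symmetric, exterior, and tensor constructions, so the bound is exactly $b(p-1)w(\Gamma)$ as claimed.
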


%%%-----------------------------------------------------------------------
\section{Families of nondegenerate toric sums}\label{S: Toric Family}

{\bf Notation.} Let $V$ be a finite subset of $\bb Q^n$ and define $\Delta(V)$ as the convex closure of $V \cup \{ 0 \}$ in $\bb R^n$. Let $Cone(V)$ be the union of all rays from the origin through $\Delta(V)$. Set $M(V) := \bb Z^n \cap Cone(V)$, a monoid. The monoid-algebra $R(V) := \bb F_q[M(V)]$ may be filtered using a (polyhedral) weight function as follows. For each $\mu \in M(V)$, let $w_V(\mu)$ be the smallest non-negative rational number such that $\mu \in w_V(\mu) \Delta(V)$. Then $w_V(M(V)) \subset \frac{1}{D(V)} \bb Z_{\geq 0}$ for some fixed positive integer $D(V)$. The following properties of $w_V$ hold.
\begin{enumerate}\label{E: weightproperties}
\item[(i)] $w_V(\mu) = 0$ if and only if $\mu = 0$.
\item[(ii)] $w_V(c \mu) = c w_V(\mu)$, if $c \geq 0$ and $\mu \in M(V)$.
\item[(iii)] $w_V(\mu + \nu) \leq w_V(\mu) + w_V(\nu)$ for all $\mu, \nu \in M(V)$.
\end{enumerate}
Furthermore, equality holds in (iii) if and only if $\mu$ and $\nu$ are cofacial with respect to the same closed face of $\Delta(V)$, i.e. the rays from 0 to $\mu$ and from 0 to $\nu$ intersect a common closed face of $\Delta(V)$.

The weight function $w_V$ imparts an increasing filtration to the ring $R(V)$ defined by
\[
\text{Fil}_i R(V) := \{ \bar g \in R(V) \mid w_V(\mu) \leq i \text{ for all } \mu \in \text{Supp}(\bar g) \}
\]
for each $i \in \frac{1}{D(V)} \bb Z_{\geq 0}$. The associated graded ring $\overline{R}(V) := \text{gr} \> R(V)$ has $M(V)$ as a basis over $\bb F_q$ and has multiplication rules
\[
x^\mu x^\nu =
\begin{cases}
x^{\mu + \nu} & \text{if $\mu$ and $\nu$ are cofacial with respect to a common closed face of $\Delta(V)$} \\
0 & \text{otherwise}.
\end{cases}
\]

\bigskip\noindent{\bf Toric family.} Let $\bar f(x) = \sum \bar A(\mu) x^\mu \in \bb F_q[x_1^\pm, \ldots, x_n^\pm]$. Define $\Sigma := Supp(\bar f) := \{ \mu \in \bb Z^n \mid \bar A(\mu) \not= 0 \}$. Using the construction above we define $\Delta(\bar f) := \Delta(\Sigma)$, $Cone(\bar f) := Cone(\Sigma)$, $M(\bar f) := M(\Sigma)$, $w := w_\Sigma$, $R := R(\Sigma)$, and $\bar R := \bar R(\Sigma)$. We will assume throughout that
\begin{enumerate}
\item $dim \Delta(\bar f) = n$
\item $\bar f$ is nondegenerate with respect to $\Delta(\bar f)$. (Recall we write for a closed face $\sigma$ of $\Delta(\bar f)$ not containing the origin $\bar f^\sigma = \sum_{\mu \in \sigma} \bar A(\mu) x^\mu$. Then $\bar f$ is \emph{nondegenerate with respect to $\Delta(\bar f)$} if for every closed face $\sigma \in \Delta(\bar f)$ not containing the origin $\{x_1 \frac{\partial \bar f^\sigma}{\partial x_1} , \ldots, x_n \frac{\partial \bar f^\sigma}{\partial x_n}\}$ have no common solutions in $(\overline{\bb F}_q^*)^n$.)
\end{enumerate}

Let $\bar G(x, t) = \bar f(x) + \bar P(x, t) \in \bb F_q[x_1^\pm, \ldots, x_n^\pm, t_1^\pm, \ldots, t_s^\pm]$, with $\bar P(x, t) = \sum t^\gamma \bar P_\gamma(x)$, where $\gamma$ runs over a finite subset $T$ of $\bb Z^s$. We assume, in addition, that
\begin{equation}\label{E: nu}
0 \leq w(\nu) < 1
\end{equation}
for every $\nu \in \bigcup_{\gamma \in T} Supp \bar P_\gamma(x)$. Note, condition (\ref{E: nu}) allows us to assume without loss that all monomials $x^\mu$ in the support of $\bar f$ have weight $w(\mu) = 1$. (All monomials of weight less than 1 can be absorbed by the deforming Laurent polynomial $\bar P$.)

\bigskip\noindent{\bf Relative polytope.} Let $U$ be a finite subset of $\bb Q^s$. As described above, any such set gives rise to data $\Delta(U)$, $Cone(U)$, $M(U)$, and $R(U)$ in $\bb R^s$, and the weight function $w_U$. For simplicity we consider subsets with $Cone(U) = Cone(T)$, $M(U) = M(T)$, with the property
\begin{equation}\label{E: weightbound}
w_U(\gamma) + w(\mu) \leq 1
\end{equation}
for all $(\gamma, \mu) \in \bb Z^s \times \bb Z^n$ in $Supp(\bar P)$. Note that, since we are assuming (\ref{E: nu}), if $U$ is chosen so that $\Delta(U)$ is sufficiently large then (\ref{E: weightbound}) will hold. We now define one which is minimal among all such and therefore gives optimal estimates.

Consider, for any $U$ satisfying (\ref{E: weightbound}), the convex set
\[
\Delta(U) \times \Delta(\bar f) \subset \bb R^{n + s}.
\]
Consider
\begin{equation}\label{E: RelPolyDef}
U_0 := \left\{ \left( \frac{1}{1 - w(\mu)} \right) \gamma \in \bb Q^s \mid  (\gamma, \mu) \in Supp(\bar P) \right\},
\end{equation}
and let $\Gamma := \Delta(U_0)$. Then (\ref{E: weightbound}) implies for any such $\delta = \left( \frac{1}{1 - w(\mu)} \right) \gamma \in U_0$ that $w_U(\delta) \leq 1$. As a consequence $\Gamma \subset \Delta(U)$ and $w_\Gamma(\gamma) \geq w_U(\gamma)$ for all $\gamma \in M(T)$. On the other hand, by very definition, if $\delta = \left( \frac{1}{1 - w(\mu)} \right) \gamma \in U_0$ then $w_\Gamma(\delta) \leq 1$ so that $w_\Gamma(\gamma) + w(\mu) \leq 1$ for all $(\gamma, \mu) \in Supp(\bar P)$, and so (\ref{E: weightbound}) holds for $U = U_0$. Thus the choice of $\Gamma$ above makes $w_\Gamma$ optimal among weight functions $w_U$ satisfying (\ref{E: weightbound}). We call $\Gamma$ the \emph{relative polytope} of the family $\bar G(x, t)$.

We will assume from now on that $U = U_0$ and our weight function $W$ on $M(T) \times M(\bar f)$ is
\begin{equation}\label{E: bigW}
W(\gamma, \mu) := w_\Gamma(\gamma) + w(\mu).
\end{equation}
Note that $W$ is the weight function for the polyhedron $\Gamma \times \Delta(\bar f)$ in $\bb R^{n+s}$. The $\Gamma$ weight of a monomial $(\gamma, \mu)$ in $M(T) \times M(\bar{f})$ is $w_{\Gamma}(\gamma)$. We set $S := \bb F_q[M(\Gamma)]$ which is filtered using $w_\Gamma$. Let $\bar S$ be the associated graded ring. Similarly, set $M := \bb F_q[M(\Gamma) \times M(\bar f)]$, filtered using $W$ defined in (\ref{E: bigW}). $M$ is an $S$-algebra. If $\bar M$ is the associated graded ring $gr(M)$, then $\bar M$ is an $\bar S$-algebra satisfying
\begin{equation}\label{E: Sfilter}
\bar S^{(i)} \bar M^{(j)} \subset \bar M^{(i+j)},
\end{equation}
with multiplication obeying the rule
\begin{equation}\label{E: mult}
(t^\gamma x^\mu)(t^\beta x^\nu) = t^{\gamma + \beta} x^{\mu + \nu}
\end{equation}
if $\gamma$ and $\beta$ are cofacial with respect to a common closed face of $\Gamma$ and $\mu$ and $\nu$ are cofacial with respect to a common closed face of $\Delta(\bar f)$, else the multiplication equals 0.

\bigskip\noindent{\bf Rings of $p$-adic analytic functions.} Let $\zeta_p$ be a primitive $p$-th root of unity. Let $\bb Q_q$ be the unramified extension of $\bb Q_p$ of degree $a := [\bb F_q : \bb F_p]$, and denote by $\bb Z_q$ its ring of integers. Then $\bb Z_q[\zeta_p]$ and $\bb Z_p[\zeta_p]$ are the ring of integers of $\bb Q_q(\zeta_p)$ and $\bb Q_p(\zeta_p)$, respectively. Let $\pi \in \bb Q_p(\zeta_p)$ be a zero of $\sum_{j = 0}^\infty t^{p^j}/p^j$ having $ord_p( \pi ) = 1/(p-1)$. We may have occasion to work over a purely ramified extension of $\bb Q_q$ (containing $\bb Q_q(\zeta_p)$). Say $\bb Q_q(\tilde \pi)$ is a totally ramified extension of $\bb Q_q$ containing $\bb Q_q(\pi)$ with uniformizer $\tilde \pi$, a root of $\pi$. In this situation we denote by $\bb Z_q[\tilde \pi]$ (resp. $\bb Z_p[\tilde \pi]$) the ring of integers of $\bb Q_q(\tilde \pi)$ (resp. $\bb Q_p(\tilde \pi)$), and by $\sigma$ the extension of the Frobenius generator of $Gal(\bb Q_q / \bb Q_p)$ defined by $\sigma(\tilde \pi) = \tilde \pi$. Set
\[
\c O_0 := \left\{ \sum_{\gamma \in M(\Gamma)} C(\gamma) t^\gamma \pi^{w_\Gamma(\gamma)} \mid C(\gamma) \in \bb Z_q[\tilde \pi], C(\gamma) \rightarrow 0 \text{ as } \gamma \rightarrow \infty \right\}.
\]
(We note again that the fractional powers of $\pi$ are to be understood as integral powers of the uniformizer $\tilde \pi$.) Then $\c O_0$ is a ring with a discrete valuation given by
\[
\left| \sum_{\gamma \in M(\Gamma)} C(\gamma) t^\gamma \pi^{w_\Gamma(\gamma)} \right| := \sup_{\gamma \in M(\Gamma)} | C(\gamma) |.
\]
Note, it is not a valuation ring. In fact, the reduction map
\[
\sum_{\gamma \in M(\Gamma)} C(\gamma) \pi^{w_{\Gamma}(\gamma)} t^\gamma  \longmapsto \sum_{\gamma \in M(\Gamma)} \bar C(\gamma) t^\gamma
\]
identifies the rings
\[
\c O_0 / \tilde \pi \c O_0 \xrightarrow{\text{ }\sim\text{ }} \bar S.
\]
Let
\[
\c C_0 := \left\{ \sum_{\mu \in M(\bar f)} \xi(\mu) \pi^{w(\mu)} x^\mu \mid \xi(\mu) \in \c O_0, \xi(\mu) \rightarrow 0 \text{ as } \mu \rightarrow \infty \right\},
\]
an $\c O_0$-algebra. For $\xi \in \c C_0$, write
\[
\xi = \sum_{\mu \in M(\bar f)} \xi(\mu) \pi^{w(\mu)} x^\mu 
= \sum_{(\gamma, \mu) \in M(\Gamma) \times M(\bar f)} \xi(\gamma, \mu) \pi^{W(\gamma, \mu)} t^\gamma x^\mu.
\]
The reduction map mod $\tilde \pi$, taking
\[
\sum_{(\gamma, \mu) \in M(\Gamma) \times M(\bar f)} \xi(\gamma, \mu) \pi^{W(\gamma, \mu)} t^\gamma x^\mu \longmapsto \sum_{(\gamma, \mu) \in M(\Gamma) \times M(\bar f)} \bar \xi(\gamma, \mu) t^\gamma x^\mu
\]
identifies the $\bar S$-algebras
\begin{equation}\label{E: ReductionOfC}
\c C_0 / \tilde \pi \c C_0 \xrightarrow{\text{ }\sim\text{ }} \bar M.
\end{equation}

\bigskip\noindent{\bf The associated complex.} Define $\gamma_0 := 1$, and for $i \geq 1$
\begin{align*}
\gamma_i :&= \pi^{-1} \sum_{j=0}^i \frac{\pi^{p^j}}{p^j} \\
&= - \pi^{-1} \sum_{j=i+1}^\infty \frac{\pi^{p^j}}{p^j}.
\end{align*}
Note that
\[
ord_p(\gamma_i) = \frac{p^{i+1}-1}{p-1} - (i+1).
\]
Let $G(x, t)$ be the lifting of $\bar G(x, t)$ using the Teichm\"uller units for all coefficients. Define
\[
H(x, t) := \sum_{i = 0}^\infty \gamma_i G^{\sigma^i}(x^{p^i}, t^{p^i})
\]
Notice that $G^{\sigma^i}(x^{p^i}, t^{p^i})$ has $W$-weight less than or equal to $p^i$ for all $i \geq 1$, and since
\[
x_l \frac{\partial}{\partial x_l} \left( G^{\sigma^i}(x^{p^i}, t^{p^i}) \right) = p^i \left( x_l \frac{\partial G^{\sigma^i}}{\partial x_l} \right) (x^{p^i}, t^{p^i}),
\]
and
\[
ord_p( \pi \gamma_i p^i ) - \frac{p^i}{p-1} = p^i - 1 \geq 0,
\]
we see that multiplication by $\pi x_l \frac{\partial H(x, t)}{\partial x_l}$ defines an endomorphism of $\c C_0$. Hence, we may define a complex of $\c O_0$-algebras $\Omega^\bullet(\c C_0, \nabla_{ G})$ by
\[
\Omega^i(\c C_0, \nabla_{ G}) := \bigoplus_{1 \leq k_1 < \cdots < k_i \leq n} \c C_0 \frac{dx_{k_1}}{x_{k_1}} \wedge \cdots \wedge \frac{dx_{k_i}}{x_{k_i}}
\]
with boundary map
\[
\nabla_{G}( \eta \frac{dx_{k_1}}{x_{k_1}} \wedge \cdots \wedge \frac{dx_{k_i}}{x_{k_i}} ) = \left( \sum_{l=1}^n D_{l,t}(\eta) \frac{dx_l}{x_l} \right) \wedge \frac{dx_{k_1}}{x_{k_1}} \wedge \cdots \wedge \frac{dx_{k_i}}{x_{k_i}}
\]
with
\[
D_{l,t} = x_l \frac{\partial}{\partial x_l} + \pi x_l \frac{\partial H(x, t)}{\partial x_l}.
\]
In the following section we will see that this complex is acyclic except in top dimension $n$.

%%%-----------------------------------------------------------------------
\subsection{Computing cohomology of $\Omega^\bullet(\c C_0, \nabla_{\bar G})$}

The complex $\Omega^\bullet(\c C_0, \nabla_{G})$ is a complex of $\c O_0$-algebras. The reduction modulo $\tilde \pi$ of this complex may be identified with the complex $\Omega^\bullet(\bar M, \nabla_{\bar G})$ of $\bar S$-algebras, where
\[
\Omega^i(\bar M, \nabla_{\bar G}) := \bigoplus_{1 \leq k_1 < \cdots < k_i \leq n} \bar M \> \frac{dx_{k_1}}{x_{k_1}} \wedge \cdots \wedge \frac{dx_{k_i}}{x_{k_i}}
\]
and
\[
\nabla_{\bar G}( \bar \eta \frac{dx_{k_1}}{x_{k_1}} \wedge \cdots \wedge \frac{dx_{k_i}}{x_{k_i}} ) = \left( \sum_{l=1}^n \bar D_{l, t}(\eta) \frac{dx_l}{x_l} \right) \wedge \frac{dx_{k_1}}{x_{k_1}} \wedge \cdots \wedge \frac{dx_{k_i}}{x_{k_i}}
\]
where
\[
\bar D_{l,t} := x_l \frac{\partial}{\partial x_l} + x_l \frac{\partial \bar G^{(1)}(x, t)}{\partial x_l}.
\]
Here
\[
\bar G^{(1)}(x, t) := \bar f(x) + \bar P^{(1)}(x, t)
\]
in $\bar M$ consists of the terms in $\bar G$ having $W$-weight precisely equal to 1.

In order to compute the cohomology of the reduced complex it is useful to first compute the cohomology of $\Omega^\bullet(\bar M, \nabla_{d \bar G^{(1)} \wedge})$ where
\[
\Omega^i(\bar M, \nabla_{d \bar G^{(1)} \wedge}) := \bigoplus_{1 \leq k_1 < \cdots < k_i \leq n} \bar M \> \frac{dx_{k_1}}{x_{k_1}} \wedge \cdots \wedge \frac{dx_{k_i}}{x_{k_i}}
\]
with boundary
\[
d \bar G^{(1)} := \sum_{l=1}^n \left( x_l \frac{\partial \bar G^{(1)}}{\partial x_l} \right) \frac{dx_l}{x_l} \wedge.
\]

\begin{theorem}\label{T: 8}
Assume hypotheses 1 and 2 at the beginning of Section \ref{S: Toric Family} above. Then the complex $\Omega^\bullet(\bar M, \nabla_{d \bar G^{(1)} \wedge})$ is acyclic except in top dimension $n$. Furthermore, $H^n(\Omega^\bullet(\bar M, \nabla_{d \bar G^{(1)} \wedge}))$ is a free $\bar S$-algebra of rank equal to $dim_{\bb F_q}( \bar R / \sum_{i=1}^n x_i \frac{\partial \bar f}{\partial x_i} \bar R) = n! vol( \Delta(\bar f))$.

If $\bar B$ is a basis of monomials such that the $\bb F_q$-vector space $\bar V$ spanned by $\bar B$ in $\bar R$ satisfies
\[
\bar R = \bar V \oplus \sum_{i=1}^n x_i \frac{\partial \bar f}{\partial x_i} \bar R,
\]
then
\begin{equation}\label{E: 8a}
\bar M = \bar W \oplus \sum_{i=1}^n x_i \frac{\partial \bar G^{(1)}}{\partial x_i} \bar M
\end{equation}
where $\bar W$ is the free $\bar S$-submodule of $\bar M$ generated by the same set of monomials $\bar B$.
\end{theorem}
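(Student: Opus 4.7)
The plan is to establish the acyclicity of $\Omega^\bullet(\bar M,\nabla_{d\bar G^{(1)}\wedge})$ by filtering by the $\bar S$-weight $w_\Gamma$ and reducing on the associated graded to the classical Adolphson--Sperber/Kouchnirenko setting for a single nondegenerate Laurent polynomial.

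First I would observe that the multiplication rule (\ref{E: mult}) identifies $\bar M\cong \bar S\otimes_{\bb F_q}\bar R$ as bigraded $\bb F_q$-algebras, since the two cofaciality conditions on the $t$- and $x$-parts are independent. Under this identification, $\Omega^\bullet(\bar M,\nabla_{d\bar G^{(1)}\wedge})$ is the Koszul complex of $\bar M$ on the sequence $\{x_l\partial\bar G^{(1)}/\partial x_l\}_{l=1}^n$. Writing $\bar G^{(1)}=\bar f+\bar P^{(1)}$, condition~(\ref{E: nu}) forces every monomial in $\bar P^{(1)}$ to have $w$-weight strictly less than $1$, so each such monomial has strictly positive $\bar S$-weight, and
\[
x_l\frac{\partial\bar G^{(1)}}{\partial x_l}=\Bigl(1\otimes x_l\frac{\partial\bar f}{\partial x_l}\Bigr)+R_l,\qquad R_l\in\bar S_{>0}\otimes\bar R.
\]

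Next I would filter $\bar M$ (and hence the Koszul complex) by the decreasing $\bar S$-weight filtration $F^k:=\bigoplus_{w_\Gamma(\gamma)\geq k}t^\gamma\otimes\bar R$. Because the boundary has total $W$-weight equal to $1$, the Koszul complex splits as a direct sum, indexed by total $W$-weight, of finite-dimensional $\bb F_q$-subcomplexes, and on each of these $F^\bullet$ restricts to a finite filtration. On the associated graded the residuals $R_l$ drop out, leaving $\bar S\otimes_{\bb F_q} K^\bullet\bigl(x_1\partial_1\bar f,\ldots,x_n\partial_n\bar f;\bar R\bigr)$. By hypotheses 1 and 2 and the Adolphson--Sperber extension of Kouchnirenko's theorem, the sequence $(x_l\partial_l\bar f)$ is regular in $\bar R$, so its Koszul complex is acyclic except in top degree $n$, where
\[
H^n\cong \bar R\Big/\sum_{l=1}^n x_l\frac{\partial\bar f}{\partial x_l}\,\bar R
\]
has $\bb F_q$-dimension $n!\,\mathrm{vol}(\Delta(\bar f))$. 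Tensoring with $\bar S$ preserves this, and the resulting spectral sequence for each finite-dimensional total-weight piece degenerates at $E_1$, transferring acyclicity and the identification of $H^n$ as a free $\bar S$-module of rank $n!\,\mathrm{vol}(\Delta(\bar f))$ up to the unfiltered complex.

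For the decomposition~(\ref{E: 8a}) I would set $\bar W:=\bar S\otimes\bar V$ and treat the two halves separately: $(i)$ $\bar W\cap\sum_l x_l\partial_l\bar G^{(1)}\,\bar M=0$ because on the associated graded $\bar S\otimes\bar V$ and $\bar S\otimes\sum_l x_l\partial_l\bar f\,\bar R$ are complementary summands of $\bar S\otimes\bar R$, and separatedness of the filtration lifts this back to $\bar M$; and $(ii)$ $\bar W+\sum_l x_l\partial_l\bar G^{(1)}\,\bar M=\bar M$ by induction on $\bar S$-weight, where given $t^\gamma r$ I decompose $r=v+\sum_l x_l\partial_l\bar f\cdot s_l$ in $\bar R$, substitute $x_l\partial_l\bar f=x_l\partial_l\bar G^{(1)}-x_l\partial_l\bar P^{(1)}$, and recurse on the residual $-\sum_l x_l\partial_l\bar P^{(1)}\cdot t^\gamma s_l$. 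The main obstacle, and the key observation that makes this induction terminate on $\bar M$ (which is neither complete nor finitely generated over $\bar S$), is the following balance: each iteration strictly increases the $\bar S$-weight of the residual by at least the minimum positive $\bar S$-weight in $\bar P^{(1)}$, while preserving the total $W$-weight (since $\bar G^{(1)}$ is $W$-homogeneous of weight $1$ and $w(s_l)\leq w(r)-1$). Because every element of $\bar M$ sits in a fixed total-weight piece and the $\bar S$-weight can never exceed the total weight, the process must terminate after finitely many steps.
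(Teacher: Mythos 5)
Your proposal is essentially the same approach as the paper's: both filter $\bar M$ by $w_\Gamma$-weight, reduce on the associated graded to the regularity of $\{x_i\partial_i\bar f\}$ in $\bar R$, and then lift back by exploiting that each iteration strictly raises $w_\Gamma$-weight while preserving the (bounded) total $W$-weight. Your identification $\bar M\cong\bar S\otimes_{\bb F_q}\bar R$ and the spectral-sequence packaging of the acyclicity statement are clean and valid; the paper reaches the same conclusion by working out the Koszul regular-sequence argument by hand.

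The one step you elide is in your item $(i)$. The claim that ``separatedness of the filtration'' lifts the complementarity of $\bar S\otimes\bar V$ and $\bar S\otimes\sum_l x_l\partial_l\bar f\,\bar R$ from $\mathrm{gr}\,\bar M$ to $\bar M$ requires knowing that the graded submodule $\mathrm{gr}\bigl(\sum_l x_l\partial_l\bar G^{(1)}\bar M\bigr)$ actually lands inside $\bar S\otimes\sum_l x_l\partial_l\bar f\,\bar R$. This is not automatic: if the minimal-$w_\Gamma$-weight parts $\bar\xi_l^{(\rho_1)}$ of the coefficients $\bar\xi_l$ satisfy a Koszul relation $\sum_l x_l\partial_l\bar f\,\bar\xi_l^{(\rho_1)}=0$, then the naive leading term of $\sum_l x_l\partial_l\bar G^{(1)}\bar\xi_l$ cancels and the true leading term appears at a higher weight, an effect that must be controlled. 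The paper's proof of directness does exactly this by repeatedly replacing $\bar\xi_l$ with $\bar\xi_l-\sum_r x_r\partial_r\bar G^{(1)}\bar\tau_{lr}$ using the skew-symmetric Koszul witnesses, which strictly raises $\rho_1$ and terminates by the total-weight bound. This is not a deep fix, but it is the heart of that half of the theorem. Alternatively, since you have already established the $E_1$-degeneration (so that the dimension of each total-weight graded piece of $H^n$ equals that of the corresponding piece of $\bar W$) and the surjectivity $\bar W\twoheadrightarrow H^n$ from item $(ii)$, you could deduce $(i)$ by a dimension count instead, avoiding the correction argument entirely; this would be a genuinely shorter route than the paper's.
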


\begin{proof}
For the first part of the theorem, we show that for every subset $A$ of $\c S := \{1, 2, \ldots, n\}$ the set $\{ x_i \frac{\partial \bar G^{(1)}}{\partial x_i} \}_{i \in A}$ forms a regular sequence in $\bar M$. If so, then $H^i(\Omega^\bullet(\bar M, \nabla_{d \bar G^{(1)} \wedge})) = 0$ for $0 \leq i < n$ since the complex is in this case the Koszul complex on $\bar M$ defined by the elements $\{ x_i \frac{\partial \bar G^{(1)}}{\partial x_i} \}_{i \in \c S}$. So, assume
\begin{equation}\label{E: G1kernel}
\sum_{i \in A} x_i \frac{\partial \bar G^{(1)}}{\partial x_i} \bar \xi_i(x, t) = 0
\end{equation}
with $\bar \xi_i(x, t)$ in $\bar M$. Since $\bar M$ is graded and the $\{ x_i \frac{\partial \bar G^{(1)}}{\partial x_i} \}_{i \in \c S}$ are homogeneous of $W$-weight 1, it suffices to consider (\ref{E: G1kernel}) in the case in which the $\{ \bar \xi_i(x, t) \}_{i \in A}$ are homogeneous in $\bar M$, say of $W$-weight $k$. We will prove that given (\ref{E: G1kernel}), there exists then a skew-symmetric set $\{\bar \eta_{il}(x, t) \}_{i, l \in A}$ in $\bar M^{(k-1)}$ such that
\begin{equation}\label{E: skewsymm}
\bar \xi_i(x, t) = \sum_{l \in A} x_l \frac{\partial \bar G^{(1)}}{\partial x_l} \bar \eta_{il}(x, t)
\end{equation}
for every $i \in A$. Now we consider for $0 \leq \rho \leq k$ representations of $\{ \bar \xi_i \}_{i \in A}$ of the form
\begin{equation}\label{E: 7b}
\bar \xi_i(x, t) = \sum_{r \geq \rho}^k \bar \xi_i^{(r)}(x, t) + \sum_{l \in A} x_l \frac{\partial \bar G^{(1)}}{\partial x_l} \bar \gamma_{il}(x, t)
\end{equation}
where
\[
\bar \xi_i^{(r)}(x, t) = \sum_{w_\Gamma(\gamma) = r} t^\gamma \bar \xi_{i \gamma}^{(k-r)}(x)
\]
are terms of $\bar M^{(k)}$ with $w_\Gamma$-weight equal to $r$, and $w$-weight equal to $k-r$, and where $\{ \bar \gamma_{i l}(x, t) \}_{i, l \in A}$ is a skew-symmetric subset of $\bar M^{(k-1)}$. Of course, we get such a representation in the case $\rho = 0$ by taking $\bar \gamma_{i l} = 0$ for all $i$ and $l$.

Let $D$ be the least common multiple of $D(\Gamma)$ and $D(\bar f)$ so that
\begin{equation}\label{E: W-weight}
W(\gamma, \mu) = w_\Gamma(\gamma) + w(\mu) \in \frac{1}{D} \bb Z_{\geq 0}.
\end{equation}
We proceed by induction on $\rho$. More precisely, we show that given such a representation (\ref{E: 7b}) for $\{ \bar \xi_i(x, t) \}_{i \in A}$ with a given $\rho = \rho_0$ we may find another such  representation but with $\rho \geq \rho_0 + \frac{1}{D}$, so that in the end we produce a representation with $\rho > k$ hence of the form (\ref{E: skewsymm}).

It is convenient to write
\[
\bar G^{(1)} = \bar f(x) + \sum_{\substack{0 < r \leq 1 \\ r \in (1/D) \bb Z_{\geq 0}}} \bar G_r^{(1)}(x, t)
\]
where
\[
\bar G_r^{(1)}(x, t) = \sum_{w_\Gamma(\gamma) = r} t^\gamma \bar P_\gamma^{(1-r)}(x).
\]
But (\ref{E: G1kernel}) and the skew-symmetry of $\bar \gamma_{il}$ then implies that the sum of terms in $\sum_{i \in A} x_i \frac{\partial \bar G^{(1)}}{\partial x_i} \bar \xi_i(x, t)$ with $w_\Gamma$-weight equal to $\rho_0$ are precisely given by
\[
\sum_{w_\Gamma(\gamma) = \rho_0} t^\gamma \sum_{i \in A} x_i \frac{\partial \bar f}{\partial x_i} \bar \xi_{i \gamma}^{(k-\rho_0)}(x)
\]
so that for each $\gamma \in M(\Gamma)$, $w_\Gamma(\gamma) = \rho_0$, we have
\[
\sum_{i \in A} x_i \frac{\partial \bar f}{\partial x_i} \bar \xi_{i, \gamma}^{(k-\rho_0)}(x) = 0.
\]
Recall \cite{AdolpSperb-ExponentialSumsand-1989} that $\{ x_i \frac{\partial \bar f}{\partial x_i} \}_{i=1}^n$ forms a regular sequence in $\bar R$. Thus, there are for each $\gamma \in M(\Gamma)$ with $w_\Gamma(\gamma) = \rho_0$ a skew symmetric set $\{ \bar \omega_{i, l; \gamma}(x)\}_{i, l \in A} \in \bar R^{(k-\rho_0-1)}$, such that for each $\gamma$, $w_\Gamma(\gamma) = \rho_0$, we have
\begin{equation}\label{E: 12}
\bar \xi_{i, \gamma}^{(k-\rho_0)}(x) = \sum_{l \in A} x_l \frac{\partial \bar f}{\partial x_l} \omega_{i,l; \gamma}(x).
\end{equation}
If we write
\[
\bar \tau_i(x, t) = \bar \xi_i(x, t) - \sum_{l \in A} x_l \frac{\partial \bar G^{(1)}}{\partial x_l} \left( \sum_{w_\Gamma(\gamma) = \rho_0} t^\gamma \bar w_{i,l;\gamma}(x) \right)
\]
then the representation of $\bar \tau_i$ according to $w_\Gamma$-weight begins with at least $\rho_0 + 1/D$. This completes the inductive step and establishes the first part of the theorem.

Since the boundary operator $d \bar G^{(1)} \wedge$ is $\bar S$-linear, $H^n(\Omega^\bullet(\bar M, \nabla_{d \bar G^{(1)} \wedge}))  = \bar M / \sum_{l=1}^n x_l \frac{\partial \bar G^{(1)}}{\partial x_l} \bar M$ is an $\bar S$-module. We claim
\[
\bar M = \bar W \oplus \sum_{l=1}^n x_l \frac{\partial \bar G^{(1)}}{\partial x_l} \bar M
\]
which identifies $H^n(\Omega^\bullet(\bar M, \nabla_{d \bar G^{(1)} \wedge})) $ as the free $\bar S$-submodule $\bar W$ of $\bar M$ with basis $\bar B$. Consider a monomial $t^\gamma x^\mu$ with $W(\gamma, \mu) = k$ in $\bar M$. We know
\[
x^\mu = \sum \bar a_i v_i + \sum_{l=1}^n x_l \frac{\partial \bar f}{\partial x_l} \bar \xi_l(x)
\]
with the $v_i$'s in $\bar B$ of $w$-weight $k - w_\Gamma(\gamma)$ and the $\{ \bar \xi_{l} \}_{l = 1} ^n \subset \bar R^{(k-w_\Gamma(\gamma) - 1)}$. Rewriting we have
\[
x^\mu = \sum \bar a_i v_i + \sum_{l=1}^n x_l \frac{\partial \bar G^{(1)}}{\partial x_l} \bar \xi_l(x) - \sum_{l=1}^n x_l \frac{\partial \bar P^{(1)}(x, t)}{\partial x_l} \bar \xi_l(x).
\]
But then
\[
t^\gamma x^\mu = \sum \bar a_i t^\gamma v_i + \sum_{l=1}^n x_l \frac{\partial \bar G^{(1)}}{\partial x_l} \left( t^\gamma \bar \xi_l(x) \right) - \sum_{l=1}^n x_l \frac{\partial \bar P^{(1)}(x, t)}{\partial x_l}\left( t^\gamma \bar \xi_l(x) \right).
\]
The support of the terms in the last sum on the right all have $W$-weight $k$ but they all have $w_\Gamma$-weight strictly greater than $w_\Gamma(\gamma)$. We may now repeat the preceding argument replacing each $t^{\gamma'} x^{\mu'}$ in the sum $\sum_{l=1}^n x_l \frac{\partial \bar P^{(1)}(x, t)}{\partial x_l} t^\gamma \bar \xi_l(x)$ in an obvious manner and proceed inductively to establish
\begin{equation}\label{E: 13}
\bar M^{(k)} \subset \bar W^{(k)}  + \sum_{l=1}^n x_l \frac{\partial \bar G^{(1)}}{\partial x_l} \bar M^{(k-1)}.
\end{equation}
It remains to show the sum on the right of (\ref{E: 8a}) is direct. Since the submodules on the right side of (\ref{E: 8a}) are homogeneous it suffices to show that the sum on the right side of (\ref{E: 13}) is direct for every $k$. Let
\begin{equation}\label{E: 14}
\bar w = \sum_{l=1}^n x_l \frac{\partial \bar G^{(1)}}{\partial x_l} \bar \xi_l
\end{equation}
in $M^{(k)}$ and write $\bar w$ and $\{ \bar \xi_l \}_{l=1}^n$ in terms of ascending $w_\Gamma$-weight, $\bar w = \sum \bar a_{i,\gamma} t^\gamma v_i$ with $w_\Gamma$-weight of all terms of $\bar w$ at least $\rho_0$, and with at least one coefficient $\bar a_{i, \gamma} \not= 0$ for $(i, \gamma)$ with $w_\Gamma(\gamma) = \rho_0$. Similarly, let the minimal $w_\Gamma$-weight of any term in the support of $\bar \xi_l$ for some $l$ be $\rho_1$. If $\rho_0 < \rho_1$, then (\ref{E: 14}) implies
\[
\sum_i \sum_{w_\Gamma(\gamma) = \rho_0} \bar a_{i, \gamma} t^\gamma v_i = 0
\]
in $\bar M^{(k)}$ which contradicts our assumption that at least one coefficient $\bar a_{i,\gamma} \not= 0$. Next write
\begin{equation}\label{E: 14a}
\bar \xi_l(x, t) = \sum_{w_\Gamma(\gamma) = \rho_1} t^\gamma \bar \xi_{l, \gamma}(x) + \bar \eta_l(x, t)
\end{equation}
with $\bar \eta_l(x, t) \in \bar M^{(k-1)}$ having terms in support all having $w_\Gamma$-weight strictly greater than $\rho_1$. If $\rho_1 < \rho_0$, then (\ref{E: 14}) implies
\[
0 = \sum_{w_\Gamma(\gamma) = \rho_1} t^\gamma \sum_{l=1}^n x_l \frac{\partial \bar f}{\partial x_l} \bar \xi_{l, \gamma}(x).
\]
Thus, for each $\gamma$, $w_\Gamma(\gamma) = \rho_1$,
\[
\sum_{l=1}^n x_l \frac{\partial \bar f}{\partial x_l} \bar \xi_{l, \gamma}(x) = 0.
\]
Since the $\{ x_l \frac{\partial \bar f}{\partial x_l} \}_{l=1}^n$ is a regular sequence in $\bar R$, there is for each $\gamma$, $w_\Gamma(\gamma) = \rho_1$, a skew-symmetric set
\[
\{ \bar \tau_{l,r; \gamma}(x) \}_{l, r \in A} \subset \bar R^{(k-\rho_1-2)}
\]
with
\begin{equation}\label{E: 15}
\bar \xi_{l, \gamma}(x) = \sum_{r=1}^n x_r \frac{\partial \bar f}{\partial x_r} \bar \tau_{l, r; \gamma}(x).
\end{equation}
But then let
\[
\bar \omega_l := \bar \xi_l - \sum_{w_\Gamma(\gamma) = \rho_1} t^\gamma \sum_{r=1}^n x_r \frac{\partial \bar G^{(1)}}{\partial x_r} \bar \tau_{l, r; \gamma}(x).
\]
Using the skew-symmetry and (\ref{E: 14}), we have
\[
\bar w = \sum_{l=1}^n x_l \frac{\partial \bar G^{(1)}}{\partial x_l} \bar \omega_l.
\]
But the smallest $w_\Gamma$-weight among the $\{ \bar \omega_l \}$ is strictly greater than $\rho_1$ by (\ref{E: 15}).  So given (\ref{E: 14}) with $\rho_1 < \rho_0$ we are led to the case where $\rho_1 = \rho_0$. In this case comparing terms having $w_\Gamma$-weight equal to $\rho_0 = \rho_1$ on both sides of (\ref{E: 14}) we have, writing $\bar \xi_l$ as in (\ref{E: 14a}),
\[
\sum_{w_\Gamma(\gamma) = \rho_0, v_i \in \bar B} \bar a_{i,\gamma} v_i = \sum_{w_\Gamma(\gamma) = \rho_0} \sum_{l=1}^n x_l \frac{\partial \bar f}{\partial x_l} \bar \xi_{l, \gamma}(x)
\]
which contradicts the definition of $\bar B$. This completes the proof of Theorem \ref{T: 8}.
\end{proof}

Since $\bar M$ is a graded ring, and the operator $x_l \frac{\partial}{\partial x_l}$ acts on $\bar M$ homogeneously of $W$-weight 0 and multiplication by $x_l \frac{\partial \bar G^{(1)}}{\partial x_l}$ acts homogeneously of $W$-weight 1, then the following result is immediate from Theorem \ref{T: 8}. For notational convenience, we will write $w(v)$ to mean $w(\mu)$ for $v = x^\mu \in \bar B$.

\begin{theorem}\label{T: 16}
We have
\begin{enumerate}
\item $\Omega^\bullet(\bar M, \nabla_{\bar G^{(1)}})$ is acyclic except in top dimension $n$.
\item $H^n(\Omega^\bullet(\bar M, \nabla_{\bar G^{(1)}})) = \bar M / \sum_{l=1}^n \bar D_{l, t} \bar M$ is a free $\bar S$-module of rank $n! vol \Delta(\bar f)$ with basis $\bar B$.
\item We may write
\[
\bar M = \bar W \oplus \sum_{l=1}^n \bar D_{l, t} \bar M.
\]
Furthermore, if $t^\gamma x^\mu \in \bar M^{(k)}$ then
\begin{equation}\label{E: Fq-decomp}
t^\gamma x^\mu = \sum_{v_i \in \bar B} a_{i, \nu} t^\nu v_i + \sum_{l=1}^n \bar D_{l, t} \bar \xi_l(x, t)
\end{equation}
where  $W(\nu, v_i) \leq k$, $w_\Gamma(\nu) \geq w_\Gamma(\gamma)$ and any term $(\beta, \tau)$ in the support of any $\bar \xi_l$ has $W(\beta, \tau) \leq k-1$ and $w_\Gamma(\beta) \geq w_\Gamma(\gamma)$.
\end{enumerate}
\end{theorem}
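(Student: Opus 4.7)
The plan is to promote Theorem \ref{T: 8} from the graded setting to the full filtered complex via a top $W$-weight induction. Since $\bar M$ is $W$-graded, the differential decomposes as $\bar D_{l,t} = x_l\,\partial/\partial x_l + x_l\,\partial \bar G^{(1)}/\partial x_l$, where the first summand preserves $W$-weight and the second raises it by $1$; the second summand alone furnishes the boundary of the complex in Theorem \ref{T: 8}, which is therefore the associated graded of $\Omega^\bullet(\bar M,\nabla_{\bar G^{(1)}})$ under the ascending $W$-weight filtration. The three parts of the theorem then follow from the corresponding parts of Theorem \ref{T: 8} by standard filtered-complex arguments, carried out explicitly as follows.

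For part (1), I would take a cocycle $\bar\omega\in\Omega^i(\bar M,\nabla_{\bar G^{(1)}})$ of top $W$-weight $K$ with $i<n$, and extract the weight-$(K+1)$ part of $\nabla_{\bar G^{(1)}}(\bar\omega)=0$ to obtain $d\bar G^{(1)}\wedge\bar\omega^{(K)}=0$. Theorem \ref{T: 8} produces some $\bar\eta$ of weight $K-1$ with $\bar\omega^{(K)}=d\bar G^{(1)}\wedge\bar\eta$, and replacing $\bar\omega$ by $\bar\omega-\nabla_{\bar G^{(1)}}(\bar\eta)$ kills its weight-$K$ component without altering its cohomology class. Iterating drives the top weight down to zero, at which point cocycles must vanish: in degrees $i\geq 1$ because any primitive in the Theorem \ref{T: 8} complex would be forced into weight $-1$, and in degree $0$ by the non-zero-divisor property of $\{x_l\,\partial_l\bar G^{(1)}\}$.

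For parts (2) and (3), the identification $H^n=\bar M/\sum_l\bar D_{l,t}\bar M$ is built into the complex. Surjectivity $\bar M=\bar W+\sum_l\bar D_{l,t}\bar M$ follows by $W$-weight induction: for $\bar m\in\bar M^{(k)}$, apply (\ref{E: 8a}) to write $\bar m=\bar w+\sum_l x_l\,\partial_l\bar G^{(1)}\,\bar\xi_l$ and substitute $x_l\,\partial_l\bar G^{(1)}=\bar D_{l,t}-x_l\,\partial_l$; the resulting error $\sum_l x_l\,\partial_l(\bar\xi_l)\in\bar M^{(k-1)}$ is then absorbed by the inductive hypothesis. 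For directness, assume $\bar w=\sum_l\bar D_{l,t}(\bar\xi_l)\in\bar W$ and let $M=\max_l(\text{top $W$-weight of }\bar\xi_l)$. The weight-$(M+1)$ part of the equation yields $\bar w^{(M+1)}=\sum_l x_l\,\partial_l\bar G^{(1)}\,\bar\xi_l^{(M)}$, which by the direct-sum clause of (\ref{E: 8a}) forces both sides to vanish. The resulting Koszul syzygy gives skew-symmetric $\bar\eta_{l,r}^{(M-1)}$ with $\bar\xi_l^{(M)}=\sum_r x_r\,\partial_r\bar G^{(1)}\,\bar\eta_{l,r}^{(M-1)}$, and replacing $\bar\xi_l$ by $\bar\xi_l-\sum_r\bar D_{r,t}(\bar\eta_{l,r})$ leaves $\sum_l\bar D_{l,t}(\bar\xi_l)$ unchanged thanks to the commutativity $[\bar D_{l,t},\bar D_{r,t}]=0$ (the $d^2=0$ condition) combined with skew-symmetry, while strictly reducing $M$. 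Iterating forces all $\bar\xi_l=0$, hence $\bar w=0$; freeness of $\bar W$ over $\bar S$ with basis $\bar B$ and rank $n!\,\mathrm{vol}\,\Delta(\bar f)$ are then inherited directly from Theorem \ref{T: 8}.

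The refined decomposition (\ref{E: Fq-decomp}) will be obtained by tracking the $w_\Gamma$-weight alongside the $W$-weight throughout both inductions, exactly as in the proof of (\ref{E: 13}). The key observation is that both $x_l\,\partial/\partial x_l$ and the substitution $x_l\,\partial_l\bar G^{(1)}=\bar D_{l,t}-x_l\,\partial_l$ act only on the $x$-variables, so no $t^\beta$ monomial of $w_\Gamma$-weight strictly less than $w_\Gamma(\gamma)$ is ever introduced when starting from $t^\gamma x^\mu$. I expect the main obstacle to be precisely this coupled bookkeeping: the $W$-weight drives the termination of the induction while the $w_\Gamma$-weight controls the stated bounds, and synchronising the two monotonicity statements requires exactly the same care that Adolphson--Sperber exercised in the proof of (\ref{E: 13}).
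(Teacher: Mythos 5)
Your proposal is correct and follows exactly the route the paper intends: the paper offers no written proof of Theorem \ref{T: 16} beyond the remark that, since $x_l\,\partial/\partial x_l$ is homogeneous of $W$-weight $0$ and multiplication by $x_l\,\partial \bar G^{(1)}/\partial x_l$ is homogeneous of $W$-weight $1$, the statement is ``immediate'' from Theorem \ref{T: 8}. Your write-up simply supplies the standard descending-top-weight details of that deduction (and the $w_\Gamma$-bookkeeping for (\ref{E: Fq-decomp}) mirrors the proof of (\ref{E: 13})), so it matches the paper's argument.
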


Finally, a slight modification of Theorem A1 \cite[p.402]{AdolpSperb-ExponentialSumsand-1989}, in which we drop the assumption that $\c O$ is a discrete valuation ring, gives
\begin{theorem}\label{T: A'}
Let $\c O$ be a complete ring under a discrete valuation with uniformizer $\tilde \pi$ and residue ring $F = \c O / \tilde \pi \c O$. Let
\[
C^\bullet = \{ 0 \rightarrow C^0 \xrightarrow{\partial^0} C^1 \xrightarrow{\partial^1} \cdots C^n \rightarrow 0 \}
\]
be a length $n$ cocomplex of flat, separated, complete $\c O$-modules with $\c O$-linear coboundary maps $\partial^i$. Let $\bar C^\bullet$ be the cocomplex obtained by reducing $C^\bullet$ modulo $\tilde \pi$. Then
\begin{enumerate}
\item For any $i$, $H^i(\bar C^\bullet) = 0$ implies $H^i(C^\bullet) = 0$.
\item If $H^n(\bar C^\bullet)$ is a free $F$-module of rank $l$, and $H^{n-1}(\bar C^\bullet) = 0$ then $H^n(C^\bullet)$ is a finite free $\c O$-module of rank $l$.
\end{enumerate}
\end{theorem}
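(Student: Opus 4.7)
\textbf{Proof proposal for Theorem \ref{T: A'}.}

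The plan is to follow the original DVR argument in \cite{AdolpSperb-ExponentialSumsand-1989} but replace any appeal to the DVR structure of $\c O$ with the flatness hypothesis. The key observation is that a ring carrying a discrete valuation is automatically an integral domain, so $\tilde \pi$ is a nonzerodivisor in $\c O$; flatness of each $C^i$ over $\c O$ then implies that multiplication by $\tilde \pi$ is injective on $C^i$, so we may unambiguously ``divide by $\tilde \pi$'' whenever it divides a given element. Combined with the separated and complete hypotheses, this lets us build elements of $C^\bullet$ as $\tilde\pi$-adically convergent series of successive approximations lifted from the reduction $\bar C^\bullet$.

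For part (1), given a cocycle $\xi \in C^i$ with $\partial^i \xi = 0$, its reduction $\bar\xi$ is a cocycle in $\bar C^i$, and the hypothesis $H^i(\bar C^\bullet) = 0$ yields $\bar\eta_0 \in \bar C^{i-1}$ with $\bar\partial^{i-1}\bar\eta_0 = \bar\xi$. Lift to $\eta_0 \in C^{i-1}$; then $\xi - \partial^{i-1}\eta_0 = \tilde\pi\xi_1$ for a unique $\xi_1 \in C^i$, which is again a cocycle by torsion-freeness (applied to $\tilde\pi \partial^i \xi_1 = 0$). Iterating produces $\eta_0,\eta_1,\ldots \in C^{i-1}$ with $\xi = \partial^{i-1}\!\bigl(\sum_{k\ge 0} \tilde\pi^k \eta_k\bigr)$, the sum converging in $C^{i-1}$ by completeness and separation.

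For part (2), lift an $F$-basis $\bar b_1,\ldots,\bar b_l$ of $H^n(\bar C^\bullet)$ to elements $b_1,\ldots,b_l \in C^n$; the claim is that their classes form an $\c O$-basis of $H^n(C^\bullet)$. Spanning is proved by the same Cauchy-telescoping trick: given $\xi \in C^n$, freeness of $H^n(\bar C^\bullet)$ gives $\bar\xi = \sum_i \bar a_i \bar b_i + \bar\partial^{n-1}\bar\eta$, and after lifting we write $\xi = \sum_i a_i^{(0)} b_i + \partial^{n-1}\eta_0 + \tilde\pi \xi_1$; repeating on $\xi_1$ and summing yields the desired representative using completeness. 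For linear independence, suppose $\sum_i c_i b_i = \partial^{n-1}\eta$ with some $c_i$ nonzero. Let $m$ be the minimum valuation of the $c_i$ and write $c_i = \tilde\pi^m c_i'$ with not all $c_i'$ in $(\tilde\pi)$. The main obstacle is showing that the relation can be divided by $\tilde\pi^m$ on the cochain side; this is where the hypothesis $H^{n-1}(\bar C^\bullet)=0$ enters. Indeed, reducing mod $\tilde\pi$ (when $m\ge 1$) shows $\bar\eta$ is a cocycle in $\bar C^{n-1}$, hence a coboundary, so $\eta = \partial^{n-2}\zeta + \tilde\pi\eta'$ and $\partial^{n-1}\eta = \tilde\pi\, \partial^{n-1}\eta'$; by torsion-freeness we cancel one factor of $\tilde\pi$ and iterate $m$ times. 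This reduces to the case $m=0$, where reducing mod $\tilde\pi$ gives a nontrivial $F$-linear relation among the $\bar b_i$, contradicting their independence in $H^n(\bar C^\bullet)$.

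The heart of the argument is thus the single idea of iterative lifting mod powers of $\tilde\pi$; the only subtlety relative to the original is to verify that each step uses only that $\tilde\pi$ is a nonzerodivisor on the $C^i$, which is precisely the content of flatness over the domain $\c O$. No use of unique factorization, localness, or the structure theorem for modules over a DVR is needed, so the result goes through in the stated generality.
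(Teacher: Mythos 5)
Your argument is correct and is essentially the proof the paper intends: the paper gives no proof of Theorem \ref{T: A'} beyond citing Theorem A1 of Adolphson--Sperber and remarking that the DVR hypothesis can be dropped, and your successive-approximation-modulo-powers-of-$\tilde \pi$ argument, together with the observation that flatness is used only to make $\tilde \pi$ a nonzerodivisor on each $C^i$ (so that one can cancel and divide by $\tilde \pi$, and conclude via separatedness and completeness), is exactly that adaptation. One small point of phrasing: for the rings $\c O$ actually arising in the paper (e.g.\ $\c O_0$) the ``discrete valuation'' is really a nonmultiplicative norm, so rather than inferring ``domain'' from the valuation axioms you should simply note directly that $\tilde \pi$ is a nonzerodivisor in $\c O$ and that $\{\,c : |c|<1\,\} = \tilde \pi \c O$ with $\bigcap_m \tilde \pi^m \c O = 0$ --- which is all your argument in fact uses.
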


Using Theorem \ref{T: A'}, we obtain the following corollary to Theorem \ref{T: 16}:

\begin{theorem}\label{T: 17}
The complex $\Omega^\bullet(\c C_0, \nabla_{G})$ is acyclic except in top dimension $n$, and $H^n(\Omega^\bullet(\c C_0, \nabla_{G}))$ is a free $\c O_0$-module of rank equal to $n! \> vol \Delta(\bar f)$. Furthermore,
\[
\c C_0 = \sum_{v \in B} \c O_0 \pi^{w(v)} v \oplus \sum_{l=1}^n D_{l,t} \c C_0
\]
where $B$ is a lifting of the monomials in $\bar B$ to characteristic zero and
\[
D_{l, t} := x_l \frac{\partial}{\partial x_l} + \pi x_l \frac{\partial H(x, t)}{\partial x_l}.
\]
\end{theorem}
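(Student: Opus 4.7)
The plan is to deduce Theorem \ref{T: 17} from the characteristic-$p$ result Theorem \ref{T: 16} by means of the lifting principle Theorem \ref{T: A'}, applied with $\c O = \c O_0$ and uniformizer $\tilde\pi$. First I would verify the hypotheses of Theorem \ref{T: A'}. Each $\Omega^i(\c C_0, \nabla_G)$ is a finite direct sum of copies of $\c C_0$, which by its very definition is $\tilde\pi$-adically separated and complete; flatness over $\c O_0$ is immediate since $\{\pi^{w(\mu)} x^\mu\}_{\mu \in M(\bar f)}$ is a formal orthonormal basis making $\c C_0$ a topologically free $\c O_0$-module. The coboundary $\nabla_G$ is $\c O_0$-linear because $x_l \partial/\partial x_l$ annihilates $\c O_0$ and multiplication by $\pi x_l \partial H/\partial x_l \in \c C_0$ is $\c O_0$-linear.

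The key verification is that the reduction mod $\tilde\pi$ of $\Omega^\bullet(\c C_0, \nabla_G)$, under the identification (\ref{E: ReductionOfC}), is precisely $\Omega^\bullet(\bar M, \nabla_{\bar G^{(1)}})$. The part $x_l \partial/\partial x_l$ obviously reduces correctly. For the multiplication piece $\pi x_l \partial H/\partial x_l$, one splits $H = G + \sum_{i\geq 1}\gamma_i G^{\sigma^i}(x^{p^i},t^{p^i})$. A monomial $t^\gamma x^\mu$ of $G$ has $W(\gamma,\mu)\leq 1$, so the product $\pi \cdot t^\gamma x^\mu$ has a factor $\pi^{1 - W(\gamma,\mu)}$ in its $\c C_0$-expansion, which is a unit exactly when $W(\gamma,\mu) = 1$ and the cofacial condition holds, and otherwise lies in $\tilde\pi\c C_0$; so only the $\bar G^{(1)}$-piece survives. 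For $i \geq 1$, the factor $\pi\gamma_i p^i$ has $p$-order $p^{i+1}/(p-1) - 1$, and after subtracting the weight contribution $p^i/(p-1)$ coming from the enlarged polyhedral weight of $(x^{p^i},t^{p^i})$, there remains a nonnegative $p$-order of $p^i - 1 \geq 1$, so these terms all vanish mod $\tilde\pi$. Thus $D_{l,t}$ reduces to $\bar D_{l,t}$, as required.

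Given this, Theorem \ref{T: 16}(1) and part (1) of Theorem \ref{T: A'} immediately yield $H^i(\Omega^\bullet(\c C_0, \nabla_G)) = 0$ for $i < n$. Combining Theorem \ref{T: 16}(2) with the identification $\c O_0/\tilde\pi\c O_0 \cong \bar S$ and part (2) of Theorem \ref{T: A'} produces that $H^n(\Omega^\bullet(\c C_0,\nabla_G))$ is a free $\c O_0$-module of rank $n!\,\mathrm{vol}\,\Delta(\bar f)$. For the explicit decomposition, consider the $\c O_0$-linear map
\[
\phi : \bigoplus_{v \in B} \c O_0\,\pi^{w(v)} v \longrightarrow \c C_0 \Big/ \sum_{l=1}^n D_{l,t}\c C_0,
\qquad a_v\,\pi^{w(v)} v \longmapsto \bigl[a_v\,\pi^{w(v)} v\bigr].
\]
Theorem \ref{T: 16}(3) states that $\bar M = \bar W + \sum_l \bar D_{l,t}\bar M$, so $\phi$ is surjective after reducing mod $\tilde\pi$; by $\tilde\pi$-adic completeness of $\c C_0$ (a standard Nakayama/successive approximation argument), $\phi$ itself is surjective. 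Since $\phi$ is then a surjection between free $\c O_0$-modules of the same finite rank $|B| = n!\,\mathrm{vol}\,\Delta(\bar f)$, it is also injective (any surjective endomorphism of a finitely generated module over a commutative ring is injective). This injectivity is exactly the assertion that the sum $\sum_v \c O_0 \pi^{w(v)} v + \sum_l D_{l,t}\c C_0$ is direct, completing the proof.

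The main obstacle I anticipate is the reduction-of-complex computation in paragraph two: one must carefully bookkeep the $p$-orders of the coefficients $\gamma_i$, the factor $p^i$ coming from the chain rule, and the polyhedral weight of $(x^{p^i}, t^{p^i})$ relative to the enlarged scale $\pi^{W(\cdot)}$ in $\c C_0$, in order to confirm that everything outside of $\pi x_l \partial \bar G^{(1)}/\partial x_l$ is genuinely divisible by $\tilde\pi$. Once that bookkeeping is in place, the rest of the argument is a formal application of the lifting theorem and linear algebra over $\c O_0$.
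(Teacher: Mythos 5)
Your proposal is correct and follows essentially the same route as the paper, which obtains Theorem \ref{T: 17} precisely by applying the lifting principle of Theorem \ref{T: A'} to the characteristic-$p$ result Theorem \ref{T: 16}, using the identification (\ref{E: ReductionOfC}) and the order computation $ord_p(\pi\gamma_i p^i) - p^i/(p-1) = p^i - 1$ to see that $D_{l,t}$ reduces to $\bar D_{l,t}$. The extra details you supply (the bookkeeping for the reduction of the connection and the surjection-between-free-modules-of-equal-rank argument for the explicit direct sum) are exactly the verifications the paper leaves implicit, and they are sound.
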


%%%-----------------------------------------------------------------------
\subsection{Frobenius}

It will be convenient in this section to denote $\bb F_q$ by $k_0$. To motivate the development of the spaces $\c C_0(\c O_{0,q})$ defined below, formally define
\begin{align}\label{E: FormalFrob}
\alpha_1 &:= \sigma^{-1} \circ \frac{1}{\exp \pi H(x, t^p)} \circ \psi_p \circ \exp \pi H(x, t) \\
\alpha &:= \frac{1}{\exp \pi H(x, t^q)} \circ \psi_q \circ \exp \pi H(x, t), \notag
\end{align}
where
\begin{align*}
\psi_p \left( \sum A(\mu) x^\mu\right) &= \sum A(p \mu) x^\mu \\
\psi_q \left(\sum A(\mu) x^\mu \right) &= \sum A(q\mu) x^\mu,
\end{align*}
and $\sigma \in Gal( \bb Q_q(\zeta_p) / \bb Q_p(\zeta_p) )$ is the Frobenius generator which we extend to $Gal(K / K_0)$ taking $\sigma(\tilde \pi) = \tilde \pi$. Since formally,
\[
D_{l, t} = \frac{1}{\exp \pi H(x, t)} \circ x_l \frac{\partial}{\partial x_l} \circ \exp \pi H(x, t)
\]
the following commutation laws will hold for $l = 1, 2, \ldots, n$,
\begin{equation}\label{E: 30}
q D_{l, t^q} \circ \alpha = \alpha \circ D_{l, t} \quad \text{and} \quad p D_{l, t^p} \circ \alpha = \alpha \circ D_{l, t}.
\end{equation}
Since the differential operators $D_{l, t}$ commute with $\alpha$ by changing $t$ to either $t^p$ or $t^q$, in order to proceed, we need to introduce some new spaces. In the following, $q = p^a$ is an arbitrary power of $p$ (including the case when $a = 0$), so we can handle the cases of $t^q$, $t^p$, and $t$, at the same time. Define
\begin{equation}\label{E: 32}
\c O_{0, q} := \left\{ \sum_{\gamma \in M(\Gamma)} A(\gamma) t^{\gamma} \pi^{w_{q\Gamma}(\gamma)} \mid A(\gamma) \in \bb Z_q[\pi], A(\gamma) \rightarrow 0 \text{ as } \gamma \rightarrow \infty \right\}.
\end{equation}
This ring is the same as $\c O_0$ except using a weight function defined by the dilation $q \Gamma$ (that is, $w_{q \Gamma}(\gamma) = w_\Gamma(\gamma) / q$). We note that here $\c O_{0, 1} = \c O_0$. A discrete valuation may be defined as follows. If $\xi = \sum_{\gamma \in M(\Gamma)} A(\gamma) \pi^{w_{q\Gamma}(\gamma)} t^\gamma$ then
\[
| \xi | := \sup_{\gamma \in M(\Gamma)} |A(\gamma)|.
\]
We may also define the space
\begin{equation}\label{E: 33}
\c C_0(\c O_{0,q}) := \left\{ \sum_{\mu \in M(\bar f)} \xi_\mu x^\mu \pi^{w(\mu)} \mid \xi_\mu \in \c O_{0, q},  \xi_\mu \rightarrow 0 \text{ as } \mu \rightarrow \infty \right\}.
\end{equation}
We will sometimes write $\c C_0$ as $\c C_0(\c O_0)$. For $\eta = \sum_{\mu \in M(\bar f)} \xi_\mu \pi^{w(\mu)} x^\mu$, we set
\[
|\eta| = \sup_{\mu \in M(\bar f)} |\xi_\mu|.
\]
The reduction map
\[
\sum A(\gamma) t^\gamma \pi^{w_{q \Gamma}(\gamma)} \mapsto \sum \overline{A(\gamma)} t^\gamma
\]
takes $\c O_{0,q}$ to the graded ring $\bar S_q := gr \> k_0[M(\Gamma)]$, where $k_0[M(\Gamma)]$ has been graded using $w_{q\Gamma}$. In this case, $\bar S_q$ is identical to $\bar S$ defined earlier but regraded so that $\bar S_q^{(i/q)} = \bar S^{(i)}$.

Replacing the weight function (\ref{E: W-weight}) with
\begin{equation}\label{E: q-weight}
W_q(\gamma, \mu) := w_{q \Gamma}(\gamma) + w(\mu) \in \frac{1}{D(q)} \bb Z_{\geq 0},
\end{equation}
where $D(q)$ is the least common multiple of $D$ and $q$, the proofs of Theorems \ref{T: 8}, \ref{T: 16}, and \ref{T: 17} now may be modified slightly so that analogous versions hold for these spaces, using $M_q := k_0[M(q\Gamma) \times M(\bar f)]$ filtered by $W_q$ and $\bar M_q := gr \> M_q$. Of course, since $M(\Gamma) = M(q \Gamma)$, we see that $\bar {M}_q$ is just $\bar {M}$ regarded with $\bar {M}_q^{(i/q)} = \bar {M}^{(i)}$. We give explicitly the statement of our analogue of Theorem \ref{T: 17}:

\begin{theorem}\label{T: 34}
Let $D_{l, t^q} := x_l \frac{\partial}{\partial x_l} + \pi x_l \frac{\partial H(x, t^q)}{\partial x_l}$. Let $\Omega^\bullet(\c C_0(\c O_{0,q}), \nabla_{\bar G(x, t^q)})$ be the complex with
\[
\Omega^i(\c C_0(\c O_{0, q}), \nabla_{\bar G(x, t^q)}) = \bigoplus_{1 \leq j_1 < \cdots < j_i \leq n} \c C_0(\c O_{0,q}) \frac{dx_{j_1}}{x_{j_1}} \wedge \cdots \wedge \frac{dx_{j_i}}{x_{j_i}}
\]
and
\[
\nabla_{\bar G(x, t^q)}(\xi \frac{dx_{j_1}}{x_{j_1}} \wedge \cdots \wedge \frac{dx_{j_i}}{x_{j_i}}) = \left(\sum_{l=1}^n D_{l, t^q}(\xi) \frac{d x_l}{x_l} \right) \wedge \frac{dx_{j_1}}{x_{j_1}} \wedge \cdots \wedge \frac{dx_{j_i}}{x_{j_i}}.
\]
Then this complex is acyclic except in middle dimension $n$, and $H^n(\Omega^\bullet(\c C_0(\c O_{0, q}), \nabla_{\bar G(x, t^q)}))$ is a free $\c O_{0,q}$-module of rank equal to $n! \> vol(\Delta(\bar f))$. Furthermore,
\[
\c C_0(\c O_{0, q}) = \sum_{v \in B} \c O_{0, q} \pi^{w(v)} v \oplus \sum_{l=1}^n D_{l, t^q} \c C_0(\c O_{0, q}),
\]
where $B$ is the same collection of elements in $M(\bar f)$ as in Theorem \ref{T: 17}.
\end{theorem}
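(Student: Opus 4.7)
\textbf{Proof Plan for Theorem \ref{T: 34}.}

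The strategy is to repeat the proofs of Theorems \ref{T: 8}, \ref{T: 16}, and \ref{T: 17}, systematically replacing the weight function $W$ by $W_q$ and $w_\Gamma$ by $w_{q\Gamma}$. The central observation is that, as $\bb F_q$-algebras under the cofacial multiplication rule (\ref{E: mult}), one has identifications $\bar S_q = \bar S$ and $\bar M_q = \bar M$; only the grading has been rescaled via $\bar M_q^{(i/q)} = \bar M^{(i)}$. Since the rescaling is linear, all induction-on-weight arguments survive verbatim once we check that the values of $W_q$ still lie in a discrete set, namely $\frac{1}{D(q)}\bb Z_{\geq 0}$, where $D(q) = \mathrm{lcm}(D, q)$ as in (\ref{E: q-weight}).

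First I would establish the analog of Theorem \ref{T: 8}. The top-$W_q$-weight part of $\bar G(x,t^q)$ in $\bar M_q$, call it $\bar G^{(1,q)}$, is the sum of monomials $c \, t^{q\gamma}x^\mu$ with $w_\Gamma(\gamma)+w(\mu)=1$, and its partial derivatives $\{x_l\,\partial \bar G^{(1,q)}/\partial x_l\}_{l=1}^n$ are homogeneous of $W_q$-weight $1$. The inductive proof that these form a regular sequence in $\bar M_q$ runs exactly as in the proof of Theorem \ref{T: 8}: one reduces by ascending $w_{q\Gamma}$-weight, at each stage peeling off the lowest $w_{q\Gamma}$-stratum and invoking the regular sequence property of $\{x_l\,\partial\bar f/\partial x_l\}$ in $\bar R$ to rewrite the coefficient with a skew-symmetric remainder. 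The hypothesis-1-and-2 assumptions on $\bar f$ are what feed this step, and they are unchanged. The same argument gives the decomposition $\bar M_q = \bar W_q \oplus \sum_l x_l\,\partial\bar G^{(1,q)}/\partial x_l \cdot \bar M_q$, with $\bar W_q$ the free $\bar S_q$-submodule generated by $\bar B$; this is the analog of (\ref{E: 8a}). From here the analog of Theorem \ref{T: 16} follows because the connection $\bar D_{l,t^q} = x_l\partial/\partial x_l + x_l\,\partial\bar G^{(1,q)}/\partial x_l$ is $W_q$-homogeneously compatible with multiplication by $x_l\,\partial\bar G^{(1,q)}/\partial x_l$, so the Koszul-style argument transfers.

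Next I would lift to characteristic zero via Theorem \ref{T: A'}. The spaces $\c O_{0,q}$ and $\c C_0(\c O_{0,q})$ are flat, separated, and $\tilde\pi$-adically complete, and the reduction map mod $\tilde\pi$ identifies $\c O_{0,q}/\tilde\pi\c O_{0,q} \xrightarrow{\sim}\bar S_q$ and $\c C_0(\c O_{0,q})/\tilde\pi\c C_0(\c O_{0,q}) \xrightarrow{\sim}\bar M_q$. The key verification here is that the reduction mod $\tilde\pi$ of the operator $\pi\, x_l\,\partial H(x,t^q)/\partial x_l$ equals $x_l\,\partial\bar G^{(1,q)}/\partial x_l$: for a monomial $c\,t^{qp^i\gamma}x^{p^i\mu}$ coming from the $i$-th Dwork splitting term $\pi\gamma_i G^{\sigma^i}(x^{p^i},t^{qp^i})$, the $\pi$-order of its coefficient in the $\pi^{W_q}$-normalized basis is
\[
p^{i+1}-(p-1)(i+1) - p^i W(\gamma,\mu),
\]
which is nonnegative for all $i \geq 0$ and vanishes exactly when $i=0$ and $W(\gamma,\mu)=1$. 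Once this reduction is verified, Theorem \ref{T: A'} gives acyclicity of $\Omega^\bullet(\c C_0(\c O_{0,q}),\nabla_{\bar G(x,t^q)})$ below dimension $n$, and freeness of $H^n$ over $\c O_{0,q}$ of rank $n!\,\mathrm{vol}\,\Delta(\bar f)$; the direct sum decomposition of $\c C_0(\c O_{0,q})$ is then extracted by Nakayama lifting the decomposition (\ref{E: 8a}) of $\bar M_q$, using a lifting $B$ of $\bar B$ and applying $\pi^{w(v)}$ normalization exactly as in Theorem \ref{T: 17}.

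The step I expect to be the main obstacle is the verification that $\pi\,x_l\,\partial H(x,t^q)/\partial x_l$ is a well-defined continuous endomorphism of $\c C_0(\c O_{0,q})$, and that its reduction mod $\tilde\pi$ gives precisely $\bar G^{(1,q)}$ with no spurious contributions from higher Frobenius iterates. This is the only place where the rescaled weight $w_{q\Gamma}$ in the $t$-variables interacts nontrivially with the unchanged weight $w$ in the $x$-variables, and requires the estimate $p^{i+1}-(p-1)(i+1)-p^i W(\gamma,\mu) \geq 0$ to be checked uniformly. Once this is confirmed, the rest of the proof is formally parallel to the proofs already developed.
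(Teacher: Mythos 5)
Your proposal follows the same route as the paper: the paper's own ``proof'' is simply the remark (just before the statement of Theorem \ref{T: 34}) that the arguments of Theorems \ref{T: 8}, \ref{T: 16}, and \ref{T: 17} carry over once one replaces the weight function $W$ by $W_q$, and observes that $\bar M_q$ is $\bar M$ regraded by $\bar M_q^{(i/q)} = \bar M^{(i)}$; you verify exactly this, and correctly identify the discreteness of $W_q$ in $\frac{1}{D(q)}\bb Z_{\geq 0}$ and the application of Theorem \ref{T: A'} via the reduction $\c C_0(\c O_{0,q})/\tilde\pi \xrightarrow{\sim}\bar M_q$ as the ingredients that have to be re-checked. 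One slip in your final verification: the operator is $\pi x_l\,\partial H(x,t^q)/\partial x_l$, not $\pi H(x,t^q)$, and the chain rule applied to $G^{\sigma^i}(x^{p^i}, t^{qp^i})$ produces an extra factor $p^i$. The $\pi$-order of the $i$-th contribution in the $\pi^{W_q}$-normalized basis is therefore
\[
p^{i+1} - (p-1) - p^i\,W(\gamma,\mu),
\]
which is $\geq (p-1)(p^i-1) \geq 0$ and vanishes on $W(\gamma,\mu)=1$ precisely when $i=0$. The formula you wrote, $p^{i+1}-(p-1)(i+1)-p^i W(\gamma,\mu)$, is the $\pi$-order of $\pi\gamma_i$ alone (no derivative) and, taken at face value, would also vanish at $i=1$ when $p=2$ and $W=1$; so the chain-rule factor is exactly what rules out spurious contributions to the reduction mod $\tilde\pi$. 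Your stated conclusion is correct, and with this correction the verification matches the paper's computation for the case $q=1$ given above equation (\ref{E: 44}) but transposed to $t^q$.
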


In order to obtain sharp $p$-adic estimates for Frobenius acting on relative cohomology we modify some
of DworkÕs basic constructions. Let $0 < b \leq p/(p-1)$ be a rational number. Define
\begin{align*}
R(b; c) &:= \left\{ \sum_{\gamma \in M(\Gamma)} A(\gamma) t^{\gamma} \mid A(\gamma) \in \bb Q_q(\pi), ord_p \> A(\gamma) \geq b w_\Gamma(\gamma) + c \right\} \\
R(b) &:= \bigcup_{c \in \bb R} R(b; c).
\end{align*}
We define a valuation on $R(b)$ as follows. If $\xi = \sum_{\gamma \in M(\Gamma)} A(\gamma) t^\gamma \in R(b)$, then
\begin{align*}
ord_p \> \xi &= \inf_{\gamma \in M(\Gamma)} \{ ord_p \> A(\gamma) - w_\Gamma(\gamma) b \} \\
&= \sup \{c \in \bb R \mid \xi \in R(b; c) \}.
\end{align*}
Note that
\[
R(b; c) R(b; c') \subset R(b;  c+ c')
\]
since
\[
\left( \sum_{\gamma \in M(\Gamma)} A(\gamma) t^\gamma \right) \left( \sum_{\tilde \gamma \in M(\Gamma)} A'(\tilde \gamma) t^{\tilde \gamma} \right) = \sum_{\beta \in M(\Gamma)} \left( \sum_{\gamma + \tilde \gamma = \beta} A(\gamma) A'(\tilde \gamma) \right) t^\beta.
\]
If $\beta$ is fixed and $(\gamma, \tilde \gamma)$ runs through pairs in $M(\Gamma)$ such that $\gamma + \tilde \gamma = \beta$, then $\sup (|\gamma|, |\tilde \gamma|) \rightarrow \infty$ so that $\inf( |A(\gamma)|, |A'(\tilde \gamma)|) \rightarrow 0$ and the (possibly) infinite series $\sum_{\gamma + \tilde \gamma = \beta} A(\gamma) A'(\tilde \gamma)$ converges. That
\[
\inf \left\{ ord_p \> \left( \sum_{\gamma + \tilde \gamma = \beta} A(\gamma) A'(\tilde \gamma) \right) - w_\Gamma(\beta) b \right\} \geq c + c'
\]
is clear.

For the moment, let $\frak R$ be any ring with a $p$-adic valuation. For $b \in \bb R_{\geq 0}$ and $c \in \bb R$, set
\[
L(b,c; \frak R) := \left\{ \sum_{\mu \in M(f)} \xi_{\mu} x^{\mu} \mid \xi_\mu \in \frak R \text{ and } ord_p(\xi_{\mu}) \geq b w(\mu) + c \right\}.
\]
In particular, if $\frak R = R(b'; c')$ with $c' \geq 0$, we may write $\xi_{\mu} = \sum_{\gamma \in M(\Gamma)} A_{\gamma, \mu} t^{\gamma}$ with $ord_p(A_{\gamma, \mu}) \geq b' w_{\Gamma}(\gamma) + c'$ for all $\gamma$, so that
\[
L(b,c; \frak R) = \left\{ \sum_{(\gamma, \mu) \in M(\Gamma) \times M(f) } A_{\gamma, \mu} t^{\gamma} x^{\mu} \mid ord_p(A_{\gamma, \mu} ) \geq b' w_{\Gamma}(\gamma) + b w(\mu) + c \right\}.
\]
This motivates our definition of the spaces $K(b', b; c)$ below. Let $0 < b', b \leq p/(p-1)$ be rational numbers, and let $c \in \bb R$. Define
\begin{align*}
K(b', b; c) &:= \left\{ \sum_{(\gamma, \mu) \in M(\Gamma) \times M(\bar f)} A_{\gamma, \mu} t^\gamma x^\mu \mid A_{\gamma, \mu} \in \bb Q_q(\pi), ord_p \> A_{\gamma, \mu} \geq b' w_\Gamma(\gamma) + b w(\mu) + c \right\} \\
K(b', b) &:= \bigcup_{c \in \bb R} K(b', b; c).
\end{align*}

We consider now the Frobenius maps $\alpha$ and $\alpha_1$ on $\c C_0$ and on $K(b', b)$. Recall the Artin-Hasse series $E(t) = \exp \left( \sum_{j=0}^\infty t^{p^j}/p^j \right)$ together with $\pi$, a zero of $\sum_{j=0}^\infty t^{p^j}/p^j$ satisfying $ord_p \> \pi = 1/(p-1)$. Dwork's (infinite) splitting function is defined by
\[
\theta(t) := E(\pi t) = \sum_{j = 0}^\infty \theta_j t^j.
\]
Its coefficients satisfy $ord_p \> \theta_j \geq j / (p-1)$. Writing
\[
\bar G(x, t) = \sum \bar A(\gamma, \mu) t^\gamma x^\mu \in \bb F_q[x_1^\pm, \ldots, x_n^\pm, t_1^\pm, \ldots, t_s^\pm],
\]
we let
\[
G(x, t) := \sum A(\gamma, \mu) t^\gamma x^\mu \in \bb Z_q[x_1^\pm, \ldots, x_n^\pm, t_1^\pm, \ldots, t_s^\pm]
\]
be the lifting of $\bar G$ by Teichm\"uller units. Set
\begin{equation}\label{E: 38a}
F(x, t) := \prod_{(\gamma, \mu) \in Supp(\bar G)} \theta( A(\gamma, \mu) t^\gamma x^\mu) \in K(\frac{1}{p-1}, \frac{1}{p-1}; 0) \subset K(b'/p, b/p; 0)
\end{equation}
and
\begin{equation}\label{E: 38b}
F_a(x, t) := \prod_{i=0}^{a-1} F^{\sigma^i}(x^{p^i}, t^{p^i}) \in K(\frac{p}{q(p-1)}, \frac{p}{q(p-1)}; 0) \subset K(b'/q, b/q; 0),
\end{equation}
where $\sigma$ is the Frobenius generator of $Gal(\bb Q_q(\pi) / \bb Q_p(\pi))$ acting on the coefficients of $F$. Note that
\[
\c C_0 \subset K(\frac{1}{p-1}, \frac{1}{p-1}; 0) \subset K(b'/p, b/p)
\]
so that multiplication by $F$ takes $\c C_0$, as well as $K(b'/p, b/p)$, into $K(b'/p, b/p)$, and multiplication by $F_a$ takes these two spaces into $K(b'/q, b/q)$. It is easy to see
\begin{align*}
\psi_p( K(b', b; c)) \subset K(b', pb; c) \\
\psi_q( K(b', b; c)) \subset K(b', qb; c).
\end{align*}
Finally, we note that for $b'$ and $b > 1/(p-1)$, $K(b'/q, b; 0) \subset \c C_0(\c O_{0, q})$. Therefore, since $\psi_p$ acts on the $x$-variables,
\[
\alpha_1 :=  \sigma^{-1} \circ \psi_p \circ F(x, t)
\]
maps $\sigma^{-1}$-semilinearly $\c C_0(\c O_0)$ into $\c C_0( \c O_{0, p})$, and it maps $\sigma^{-1}$-semilinearly $K(b', b; c)$ into $K(b'/p, b; c)$. Similarly, if we define
\[
\alpha := \psi_q \circ F_a(x, t),
\]
then $\alpha$ maps $\c C_0(\c O_0)$ into $\c C_0(\c O_{0, q})$ linearly over $\bb Z_q[\pi]$, as well as $K(b', b; c)$ into $K(b'/q, b; c)$.

We may use $\alpha_1$ and $\alpha$ to define chain maps as follows. Let
\begin{align}
Frob^i &:= \bigoplus_{1 \leq j_1 < \cdots < j_i \leq n} q^{n-i} \alpha \frac{d x_{j_1}}{x_{j_1}} \wedge \cdots \wedge \frac{d x_{j_i}}{x_{j_i}} \label{E: 42a} \\
Frob_1^i &:= \bigoplus_{1 \leq j_1 < \cdots < j_i \leq n} p^{n-i} \alpha_1 \frac{d x_{j_1}}{x_{j_1}} \wedge \cdots \wedge \frac{d x_{j_i}}{x_{j_i}}. \label{E: 42b}
\end{align}
Then the commutation rules (\ref{E: 30}) ensure that these are chain maps:
\begin{align}
\Omega^\bullet(\c C_0(\c O_0), \nabla_{\bar G(x, t)}) &\xrightarrow{\text{ }Frob_1^\bullet\text{ }} \Omega^\bullet(\c C_0(\c O_{0, p}), \nabla_{\bar G(x, t^p)}) \label{E: 43a} \\
\Omega^\bullet(\c C_0(\c O_0), \nabla_{\bar G(x, t)})  &\xrightarrow{\text{ }Frob^\bullet\text{ }} \Omega^\bullet(\c C_0(\c O_{0, q}), \nabla_{\bar G(x, t^q)}) \label{E: 43b}.
\end{align}
All the complexes above are acyclic except in middle dimension $n$.

Define
\begin{align}\label{E: 44}
G_i(x, t^q) &:= x_i \frac{\partial G(x, t^q)}{\partial x_i} \\
G_i^{(1)}(x, t^q) &:= x_i \frac{\partial G^{(1)}(x, t^q)}{\partial x_i},
\end{align}
where $G^{(1)}$ is the Teichm\"uller lifting of $\bar G^{(1)}$. Note that for all $0 < b \leq p/(p-1)$, $\pi G_i^{(1)}(x, t^q)$ and $\pi G_i(x, t^q)$ both belong to $K(b/q, b; -e)$ where $e := b - \frac{1}{p-1}$. In fact, if we write $G = G^{(1)} + g$ then $G(x, t^q) = G^{(1)}(x, t^q) + g(x, t^q)$ and $\pi g_i(x, t^q) \in K(b/q, b; -e + \frac{1}{D})$, where $D$ is the least common multiple of $D(\Gamma)$ and $D(\bar f)$.

\begin{theorem}\label{T: 46}
Let $0 < b \leq p/(p-1)$ and $c \in \bb R$. Then the following equalities hold for any $q$ a power of $p$.
\begin{enumerate}
\item $K(b/q, b; c) = W(b/q, b; c) + \sum_{i=1}^n \pi G_i^{(1)}(x, t^q) K(b/q, b; c + e)$
\item $K(b/q, b; c) = W(b/q, b; c) + \sum_{i=1}^n \pi G_i(x, t^q) K(b/q, b; c + e)$
\end{enumerate}
where
\[
W(b/q, b; c) := \left\{ \sum_{\gamma \in M(\Gamma), v \in B} A(\gamma, v) t^\gamma v \mid A(\gamma, v) \in \bb Q_q(\tilde \pi), ord_p \> A(\gamma, v) \geq b W_q(\gamma, v) + c \right\}
\]
In particular,
\[
W(b/q, b; 0) = K(b/q, b; 0) \cap \bigoplus_{v \in B} \c O_{0, q} \pi^{w(v)} v.
\]
\end{theorem}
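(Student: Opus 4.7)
The containment $W(b/q, b; c) + \sum_l \pi G_l^{(1)} K(b/q, b; c+e) \subseteq K(b/q, b; c)$ in (1) is a direct check of growth: $W(b/q, b; c) \subseteq K(b/q, b; c)$ by definition, while for $h \in K(b/q, b; c+e)$ the coefficient of any monomial $t^\gamma x^\mu$ in $\pi G_l^{(1)} h$ has $ord_p \geq \frac{1}{p-1} + b(W_q(\gamma,\mu)-1) + c + e = b W_q(\gamma,\mu) + c$, using that $G_l^{(1)}$ has integral coefficients and is $W_q$-homogeneous of weight one. The analogous computation handles the easy direction of (2).

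For the reverse inclusion in (1), the plan is $p$-adic successive approximation, lifting the graded decomposition of Theorem \ref{T: 16}, in the $W_q$-scaled version noted after Theorem \ref{T: 34}, to characteristic zero. The central one-step lemma to establish is: for every $f \in K(b/q, b; c)$ there exist $w_0 \in W(b/q, b; c)$ and $h_0^{(l)} \in K(b/q, b; c+e)$ such that $f - w_0 - \sum_l \pi G_l^{(1)} h_0^{(l)} \in K(b/q, b; c+\delta)$ for a fixed $\delta > 0$ (depending only on $p, e, D$; one can take $\delta = 1/e$). Given this, iteration yields sequences whose partial sums are $p$-adically Cauchy in the respective spaces, and the limits $w = \sum_k w_k \in W(b/q, b; c)$ and $h_l = \sum_k h_k^{(l)} \in K(b/q, b; c+e)$ give the decomposition.

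To construct $w_0$ and the $h_0^{(l)}$, focus on those monomials $A_{\gamma,\mu} t^\gamma x^\mu$ of $f$ that are closest to saturating the bound $ord_p A_{\gamma,\mu} = b W_q(\gamma,\mu) + c$. After scaling each such coefficient by an appropriate power of $\tilde\pi$ to normalize, its reduction mod $\tilde\pi$ lies in $\bar M_q^{(W_q(\gamma,\mu))}$; Theorem \ref{T: 16}(3) (in its $W_q$-scaled form) then provides the decomposition $t^\gamma x^\mu \equiv \sum a_i t^{\nu_i} v_i + \sum_l x_l \frac{\partial \bar G^{(1)}}{\partial x_l} \bar \xi_l$ in $\bar M_q$, with the stated control on $W_q$-weights and $w_{q\Gamma}$-weights. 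Lifting the $a_i \in \bb F_q$ by Teichm\"uller and each $\bar\xi_l$ by any characteristic-zero representative, then rescaling and summing over these monomials, produces $w_0$ and $h_0^{(l)}$; what remains, by construction, is divisible by $\tilde\pi$ relative to the norm scale and hence lies in $K(b/q, b; c+\delta)$.

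Part (2) follows from (1) by perturbation. Writing $G = G^{(1)} + g$ with $g$ having strictly smaller $W_q$-weight, the text records that $\pi g_l \in K(b/q, b; -e + 1/D)$; substituting $G_l^{(1)} = G_l - g_l$ into the decomposition from (1) introduces an error in $K(b/q, b; c + 1/D)$, to which (1) applies again, and the iteration converges $p$-adically to the decomposition with $G_l$ in place of $G_l^{(1)}$. The final identity $W(b/q, b; 0) = K(b/q, b; 0) \cap \bigoplus_{v \in B} \c O_{0, q} \pi^{w(v)} v$ is a direct comparison of the defining growth conditions. The main obstacle is the one-step lemma: one must verify a \emph{uniform} positive gain in $p$-valuation, which requires careful accounting of the cross terms arising because multiplication in $\bar M_q$ is truncated (products of non-cofacial monomials vanish) while multiplication in $M_q$ produces additional strictly-lower-$W_q$-weight terms that must be absorbed into the $\tilde\pi$-adic error.
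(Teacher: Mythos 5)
Your proposal follows essentially the same route as the paper: the easy containment by the weight computation with $e = b - \tfrac{1}{p-1}$, the hard direction by lifting the mod-$\tilde\pi$ decomposition of Theorem \ref{T: 16}(3) (in its $W_q$-scaled form) monomial by monomial, gaining a fixed positive amount of valuation per step (the paper takes $e_0 = \tfrac{1}{(p-1)D(q)}$ where you propose $1/e$), and iterating; and part (2) deduced from part (1) by perturbing with $g = G - G^{(1)}$, whose contribution lies in $K(b/q,b;c+\tfrac{1}{D(q)})$. The ``main obstacle'' you flag is exactly the step the paper settles with its $e_0$ estimate, so the plan is sound and matches the published argument.
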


\begin{proof}
The right side of these equalities is clearly contained in the left side of the corresponding equality. We first concentrate on the first equality. Let $\xi = \sum_{(\gamma, \mu) \in M(\Gamma) \times M(\bar f)} A(\gamma, \mu) t^\gamma x^\mu \in K(b/q, b; c)$. Let $(\gamma, \mu) \in M(\Gamma) \times M(\bar f)$ with $W_q(\gamma, \mu) = i \in \frac{1}{D(q)} \bb Z_{\geq 0}$, as defined in (\ref{E: q-weight}). We know from (\ref{E: 8a}) that we may write in characteristic $p$
\[
t^\gamma x^\mu = \sum_{v \in \bar B, W_q(\beta, v) = i, w_{q\Gamma}(\beta) \geq w_{q\Gamma}(\gamma)} \overline{C((\gamma, \mu), (\beta, v))} t^\beta v + \sum_{j=1}^n \bar G_j^{(1)}(x, t^q) \bar \eta_j
\]
where $\overline{C((\gamma, \mu), (\beta, v))} \in \bb F_q$ and $\bar \eta_j \in \bar M_q^{(i-1)}$.

We lift the coefficients $\overline{C((\gamma, \mu), (\beta, v))}$ and those of the polynomial $\bar \eta_j$ to characteristic zero 
using (\ref{E: ReductionOfC}) in the form $\c C_0(\c O_{0, q}) / \tilde \pi \c C_0(\c O_{0, q}) \cong \bar M_q$. Then, for each $(\gamma, \mu)$, and $e_0 := \frac{1}{(p-1) \cdot D(q)}$, we have
\[
ord_p\left[ \pi^{W_q(\gamma, \mu)} t^\gamma x^\mu - \left( \sum_{v \in B} C((\gamma, \mu), (\beta, v)) \pi^{W_q(\beta, v)} t^\beta v \right) - \sum_{j=1}^n G_j^{(1)}(x, t^q) \eta_j(\gamma, \mu) \right] \geq e_0.
\]
Since $ord_p\left( \frac{A(\gamma, \mu)}{\pi^{W_q(\gamma, \mu)}} \right) \geq 0$, multiplication by this gives a similar result:
\[
ord_p\left[ A(\gamma, \mu) t^\gamma x^\mu - \left( \sum_{v \in B}  \frac{A(\gamma, \mu)}{\pi^{W_q(\gamma, \mu)}} C((\gamma, \mu), (\beta, v)) \pi^{W_q(\beta, v)} t^\beta v \right) - \sum_{j=1}^n G_j^{(1)}(x, t^q)  \frac{A(\gamma, \mu)}{\pi^{W_q(\gamma, \mu)}} \eta_j(\gamma, \mu) \right] \geq  e_0.
\]
Now,
\[
\omega(\gamma, \mu) := \sum_{v \in B}  \frac{A(\gamma, \mu)}{\pi^{W_q(\gamma, \mu)}} C((\gamma, \mu), (\beta, v)) \pi^{W_q(\beta, v)} t^\beta v \in W(\frac{b}{q}, b; c),
\]
and if we set $\zeta_j := \pi^{-1} \frac{A(\gamma, \mu)}{\pi^{W_q(\gamma, \mu)}} \eta_j(\gamma, \mu)$ then $\zeta_j \in K(\frac{b}{q}, b; c + e)$. Hence,
\[
A(\gamma, \mu) t^\gamma x^\mu - \left( \omega(\gamma, \mu) + \sum_{j=1}^n \pi G_j^{(1)}(x, t^q) \zeta_j(\gamma, \mu) \right) \in K(\frac{b}{q}, b; c + e_0).
\]

We obtain then $\xi = \omega^{(0)} + \sum_{j=1}^n \pi G_j^{(1)}(x, t^q) \zeta_j^{(0)} + \xi^{(1)}$ where $\omega^{(0)} = \sum_{(\gamma, \mu) \in M(\Gamma) \times M(\bar f)} \omega(\gamma, \mu)$, $\zeta_j^{(0)} = \sum_{(\gamma, \mu) \in M(\Gamma) \times M(\bar f)} \zeta_j(\gamma, \mu)$, and $\xi^{(1)} \in K(b/q, b; c+e_0)$. Iterating the argument above, we obtain for every $N \in \bb Z_{\geq 0}$, $\omega^{(N)} \in W(b/q, b; c + N e_0)$, $\zeta_j^{(N)} \in K(b/q, b; c+e+N e_0)$ for $1 \leq j \leq n$ and $\xi^{(N)} \in K(b/q, b; c + N e_0)$ with
\[
\xi^{(N)} = \omega^{(N)} + \sum_{j=1}^n \pi G_j^{(1)}(x, t^q) \zeta_j^{(N)} + \xi^{(N+1)}
\]
so
\[
\xi = \xi^{(N+1)} + \sum_{l=0}^N \omega^{(l)} + \sum_{j=1}^n \pi G_j^{(1)}(x, t^q) \left( \sum_{l=0}^N \zeta_j^{(l)} \right).
\]
Letting $N \rightarrow \infty$, we obtain that the left hand side of the first equality in Theorem \ref{T: 46} is contained in the right hand side.

We now prove the second part of the theorem. Using the first equality of the theorem, we may write each $\xi \in K(b/q, b; c)$ as
\[
\xi = \omega + \sum_{j=1}^n \pi G_j^{(1)}(x, t^q) \zeta _j
\]
with $\omega \in W(b/q, b; c)$ and $\zeta_j \in K(b/q, b; c+e)$. But then
\[
\xi = \omega + \sum_{j=1}^n \pi G_j(x, t^q) \zeta_j - \sum_{j=1}^n \pi g_j(x, t^q) \zeta_j
\]
and $\sum_{j=1}^n \pi g_j(x, t^q) \zeta_j \in K(b/q, b; c + \frac{1}{D(q)})$. Iterating this finishes the proof.
\end{proof}

Recall, we have defined
\[
H(x, t) = \sum_{j=0}^\infty \gamma_j G^{\sigma^j}(x^{p^j}, t^{p^j})
\]
with $ord_p \> \gamma_j = \frac{p^{j+1}}{p-1} - (j+1)$ for $j \geq 0$. Recall also that
\[
D_{l, t^q} := x_l \frac{\partial}{\partial x_l} + \pi H_l(x, t^q),
\]
where $H_l(x, t^q) :=  x_l \frac{\partial}{\partial x_l} H(x, t^q)$.

\begin{theorem}\label{T: 47}
For $b$ a rational number satisfying $1/(p-1) < b \leq p/(p-1)$, $c \in \bb R$, and $q$ a power of $p$, we have
\begin{equation}\label{E: 48}
K(b/q, b; c) = W(b/q, b; c) + \sum_{l=1}^n \pi H_l(x, t^q) K(b/q, b; c + e).
\end{equation}
\end{theorem}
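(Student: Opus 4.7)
The plan is to reduce Theorem \ref{T: 47} to Theorem \ref{T: 46}(ii) via the term-by-term expansion
\[
\pi H_l(x, t^q) = \pi G_l(x, t^q) + \sum_{j \geq 1} \pi \gamma_j p^j \left(x_l \frac{\partial G^{\sigma^j}}{\partial x_l}\right)(x^{p^j}, t^{qp^j}),
\]
obtained by differentiating $\pi H(x, t^q) = \sum_{j \geq 0} \pi \gamma_j G^{\sigma^j}(x^{p^j}, t^{qp^j})$ in $x_l$. First I would verify the inclusion $\supseteq$ in (\ref{E: 48}): combining the estimate $ord_p(\pi \gamma_j p^j) = p^{j+1}/(p-1) - 1$ with the observation that each monomial $(qp^j\gamma, p^j\mu)$ of the $j$-th summand has $W_q$-weight $p^j W(\gamma, \mu) \leq p^j$ (since $W(\gamma, \mu) \leq 1$ on $Supp(\bar G)$) places each summand in $K(b/q, b; -e)$, so $\pi H_l(x, t^q) \cdot K(b/q, b; c+e) \subseteq K(b/q, b; c)$.

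For the reverse inclusion $\subseteq$, I would take $\xi \in K(b/q, b; c)$ and apply Theorem \ref{T: 46}(ii) to obtain $\xi = \omega^{(0)} + \sum_l \pi G_l(x, t^q) \zeta_l^{(0)}$ with $\omega^{(0)} \in W(b/q, b; c)$ and $\zeta_l^{(0)} \in K(b/q, b; c+e)$. Substituting the key expansion then gives
\[
\xi = \omega^{(0)} + \sum_l \pi H_l(x, t^q) \zeta_l^{(0)} + r^{(1)}, \quad r^{(1)} := -\sum_{l, j \geq 1} \pi \gamma_j p^j \left(x_l \frac{\partial G^{\sigma^j}}{\partial x_l}\right)(x^{p^j}, t^{qp^j}) \zeta_l^{(0)}.
\]
A direct computation (multiplying orders and subtracting $b W_q$ of the product monomials) shows each coefficient of $r^{(1)}$ has order at least $c + (p^j - 1)(p/(p-1) - b)$, with the minimum at $j = 1$, so $r^{(1)} \in K(b/q, b; c + \delta)$ with $\delta := p - b(p-1)$. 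When $b < p/(p-1)$ we have $\delta > 0$, and I would iterate Theorem \ref{T: 46}(ii) on successive residuals $r^{(N)} \in K(b/q, b; c + N\delta)$, setting $\omega := \sum_N \omega^{(N)}$ and $\zeta_l := \sum_N \zeta_l^{(N)}$, both $p$-adically convergent, to produce the required decomposition.

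The hard part will be the boundary case $b = p/(p-1)$, where $\delta = 0$ and the $K$-norm does not strictly improve per iteration. The resolution I expect to use exploits that each new residual $r^{(N+1)}$ is obtained from $\zeta_l^{(N)}$ by multiplication through $(x_l \partial G^{\sigma^j}/\partial x_l)(x^{p^j}, t^{qp^j})$ with $j \geq 1$, which introduces factors $x^{p^j\mu'}$ (with $|\mu'|_1 \geq 1$ since $\mu'$ indexes a monomial in the support of $\bar G_l$); this drives the minimum $x$-weight of $r^{(N)}$ to infinity. For each fixed $t^\gamma x^\mu$ then only finitely many iterates $r^{(N)}, \omega^{(N)}, \zeta_l^{(N)}$ contribute a nonzero coefficient, so the infinite series converge coefficient-wise while the uniform order bounds defining $K(b/q, b; c+e)$ and $W(b/q, b; c)$ are preserved in the limits.
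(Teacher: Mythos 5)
Your expansion $\pi H_l(x,t^q)=\pi G_l(x,t^q)+\sum_{j\geq 1}\pi\gamma_jp^j\bigl(x_l\frac{\partial G^{\sigma^j}}{\partial x_l}\bigr)(x^{p^j},t^{qp^j})$ is a genuinely different starting point from the paper's, and for $1/(p-1)<b<p/(p-1)$ your argument does work: the $j$-th summand lies in $K(b/q,b;-e+(p^j-1)(\frac{p}{p-1}-b))$, so the inclusion $\supseteq$ holds and the residual gains $\delta=p-b(p-1)>0$ per iteration, giving norm convergence of $\sum\omega^{(N)}$ and $\sum\zeta_l^{(N)}$. The problem is the endpoint $b=p/(p-1)$, which is not a curiosity but the value actually used downstream (Proposition \ref{P: Relate A and B} sets $b=p/(p-1)$ to get the sharp Frobenius estimates). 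There $\delta=0$, and your rescue does not go through: $r^{(N+1)}$ is built not from $r^{(N)}$ but from $\zeta_l^{(N)}$, which is produced from $r^{(N)}$ by the Jacobian-ideal reduction of Theorem \ref{T: 46}. That reduction rewrites a monomial $t^\gamma x^\mu$ as basis elements of \emph{bounded} $x$-weight plus $\sum_j G_j^{(1)}(x,t^q)\eta_j$ with $\eta_j$ controlled only from above in total $W_q$-weight; it preserves no lower bound on $w(\mu)$ for the monomials of $\eta_j$. Hence the minimum $x$-weight of $r^{(N)}$ is bounded below by $p/D$ uniformly in $N$ but need not tend to infinity, the ``only finitely many iterates contribute to a fixed $t^\gamma x^\mu$'' claim is unjustified, and neither norm nor coefficientwise convergence is established at $b=p/(p-1)$.

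The paper closes exactly this gap with a different identity. Using the congruence $G_l^{\sigma^m}(x^{p^m},t^{qp^m})=G_l(x,t^q)^{p^m}+p\,h_{l,m}(x,t^q)$ (a consequence of the Teichm\"uller lifting and the multinomial theorem), it factors $\pi H_l(x,t^q)=\pi G_l(x,t^q)Q_l(x,t^q)+K_l(x,t^q)$ with $Q_l=\sum_{m\geq 0}\gamma_mp^mG_l^{p^m-1}$ invertible and $K_l=\sum_{m\geq 1}\gamma_mp^{m+1}h_{l,m}$. The point is the extra factor of $p$ in $K_l$: the main part of each higher Frobenius term is absorbed into a unit multiple of $\pi G_l$, and only the $p$-divisible remainder survives as a correction, so $\sum_lK_lQ_l^{-1}\zeta_l$ is strictly smaller than $\xi$ and the recursion converges throughout the full range $1/(p-1)<b\leq p/(p-1)$. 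Your term-by-term splitting discards precisely this congruence, which is why it loses exactly the margin needed at the endpoint. To repair your proof you would either have to restrict Theorem \ref{T: 47} to $b<p/(p-1)$ (which weakens the estimates that follow) or reintroduce the factorization through $Q_l$ and $K_l$.
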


\begin{proof}
It follows from the bound $b \leq p/(p-1)$, together with the $p$-adic order of $\gamma_j$ above, that $\pi H_l(x, t^q) \in K(b/q, b; c + e)$. This gives us that the right hand side of (\ref{E: 48}) is contained in the left hand side. To establish the reverse-inclusion, note that
\[
G_l^{\sigma^j}(x^{p^j}, t^{q p^j}) = G_l(x, t^q)^{p^j} + p h_{l, j}(x, t^q)
\]
where $h_{l, j}(x, t)$ has integral coefficients and all monomials $(\beta, \nu)$ in $h_{l, j}(x, t^q)$ have $W_q$-weight less than or equal to $p^j$. Thus, we may write
\[
\pi H_l(x, t^q) = \pi G_l(x, t^q) Q_l(x, t^q) + K_l(x, t^q)
\]
where
\begin{align*}
Q_l(x, t^q) &:= \sum_{m=0}^\infty \gamma_m p^m G_l(x, t^q)^{p^m-1} \\
K_l(x, t^q) &:= \sum_{m=1}^\infty \gamma_m p^{m+1} h_{l,m}(x, t^q).
\end{align*}
Consequently,
\begin{equation}\label{E: 49}
Q_l(x, t^q), Q_l(x, t^q)^{-1}, K_l(x, t^q) \in K(\frac{p}{q(p-1)}, \frac{p}{p-1}; 0).
\end{equation}
So, if $1/(p-1) < b \leq p/(p-1)$ and $\xi \in K(b/q, b; c)$ then there exists $\omega \in W(b/q, b; c)$ and $\zeta_l \in K(b/q, b; c+e)$ by Theorem \ref{T: 46} such that
\begin{align*}
\xi &= \omega + \sum_{l=1}^n \pi G_l(x, t^q) \zeta_l \\
&= \omega + \sum_{l=1}^n \pi G_l(x, t^q) Q_l(x, t^q) Q_l(x, t^q)^{-1} \zeta_l \\
&= \omega + \sum_{l=1}^n \pi H_l(x, t^q) Q_l(x, t^q)^{-1} \zeta_l - \sum_{l=1}^n K_l(x, t^q) Q_l(x, t^q)^{-1} \zeta_l
\end{align*}
with $Q_l(x, t^q)^{-1} \zeta_l$  and $\sum_{l=1}^n K_l(x, t^q) Q_l(x, t^q)^{-1} \zeta_l$ belonging to $K(b/q, b; c+e)$. Since $b > 1/(p-1)$, we have $e > 0$ and we may proceed recursively as in the previous argument.
\end{proof}

\begin{theorem}\label{T: 50}
For $b$ a rational number satisfying $1/(p-1) < b \leq p/(p-1)$, $c \in \bb R$, $q$ a power of $p$, we have
\[
K(b/q, b; c) = W(b/q, b; c) + \sum_{l=1}^n D_{l, t^q} K(b/q, b; c+e).
\]
\end{theorem}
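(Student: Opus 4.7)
The inclusion $W(b/q,b;c) + \sum_{l=1}^n D_{l,t^q} K(b/q,b;c+e) \subseteq K(b/q,b;c)$ is the easy direction. For $\zeta \in K(b/q,b;c+e)$, the operator $x_l\partial/\partial x_l$ acts on each monomial $t^\gamma x^\mu$ by multiplication by the integer $\mu_l$, which does not decrease $p$-adic order, so $x_l\partial_l \zeta \in K(b/q,b;c+e) \subseteq K(b/q,b;c)$. Since $\pi H_l(x,t^q)$ has $p$-adic order at least $-e$ against the $W_q$-weight (as established in the proof of Theorem \ref{T: 47}), multiplication by $\pi H_l$ sends $K(b/q,b;c+e)$ into $K(b/q,b;c)$. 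Adding these gives $D_{l,t^q}\zeta \in K(b/q,b;c)$, and $W(b/q,b;c) \subseteq K(b/q,b;c)$ is immediate.

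For the nontrivial inclusion, the plan is to bootstrap Theorem \ref{T: 47} using the identity
\[
\pi H_l(x,t^q)\,\zeta_l = D_{l,t^q}\zeta_l - x_l\frac{\partial \zeta_l}{\partial x_l},
\]
and absorb the remainder term $-\sum_l x_l\partial_l\zeta_l$ by iteration. Given $\xi \in K(b/q,b;c)$, Theorem \ref{T: 47} yields $\omega^{(0)} \in W(b/q,b;c)$ and $\zeta_l^{(0)} \in K(b/q,b;c+e)$ with $\xi = \omega^{(0)} + \sum_l \pi H_l \zeta_l^{(0)}$. Substituting the identity above gives
\[
\xi = \omega^{(0)} + \sum_{l=1}^n D_{l,t^q}\zeta_l^{(0)} + \xi^{(1)}, \qquad \xi^{(1)} := -\sum_{l=1}^n x_l\frac{\partial \zeta_l^{(0)}}{\partial x_l}.
\]
By the $p$-adic observation above, $\xi^{(1)} \in K(b/q,b;c+e)$.

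Now iterate: apply the same procedure to $\xi^{(1)}$ in place of $\xi$, producing $\omega^{(1)} \in W(b/q,b;c+e)$, $\zeta_l^{(1)} \in K(b/q,b;c+2e)$, and $\xi^{(2)} \in K(b/q,b;c+2e)$, and so on. After $N$ steps,
\[
\xi = \sum_{k=0}^{N}\omega^{(k)} + \sum_{l=1}^n D_{l,t^q}\!\left(\sum_{k=0}^{N}\zeta_l^{(k)}\right) + \xi^{(N+1)},
\]
with $\xi^{(N+1)} \in K(b/q,b;c+(N+1)e)$. The hypothesis $b > 1/(p-1)$ gives $e > 0$, so $\xi^{(N+1)} \to 0$ $p$-adically as $N \to \infty$. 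Since $\omega^{(k)} \in W(b/q,b;c+ke) \subseteq W(b/q,b;c)$ tends to zero, the sum $\omega := \sum_k \omega^{(k)}$ converges in $W(b/q,b;c)$; likewise $\zeta_l := \sum_k \zeta_l^{(k)}$ converges in $K(b/q,b;c+e)$. Passing to the limit gives $\xi = \omega + \sum_{l=1}^n D_{l,t^q}\zeta_l$, as required.

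The only subtle point is verifying that $x_l\partial/\partial x_l$ preserves each space $K(b/q,b;c')$; this rests on the trivial but essential fact that $\mu_l \in \bb Z$ has nonnegative $p$-adic order, so the operator commutes with the filtration by $W_q$-weight. No convergence issue arises from the derivative itself, which is the key reason the recursion converges at rate $e$ per step rather than producing a contribution that cancels the gain from Theorem \ref{T: 47}.
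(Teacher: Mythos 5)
Your proof is correct and follows the same route as the paper: both use Theorem \ref{T: 47} to write $\xi = \omega + \sum_l \pi H_l(x,t^q)\zeta_l$, rewrite $\pi H_l \zeta_l = D_{l,t^q}\zeta_l - x_l\partial\zeta_l/\partial x_l$, observe that the remainder $-\sum_l x_l\partial\zeta_l/\partial x_l$ lies in $K(b/q,b;c+e)$, and iterate, using $e>0$ for convergence. You have simply written out explicitly the recursion that the paper compresses into ``the theorem follows by a similar recursive argument.''
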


\begin{proof}
Again, the right hand side is contained in the left side. For the reverse inclusion, let $\xi \in K(b/q, b; c)$.  We know
\[
\xi =  \omega + \sum_{l=1}^n \pi H_l(x, t^q) \zeta_l
\]
where $\omega \in W(b/q, b; c)$ and $\zeta_l \in K(b/q, b; c+e)$ by Theorem \ref{T: 47}. But then
\[
\xi = \omega + \sum_{l=1}^n D_{l, t^q} \zeta_l - \sum_{l=1}^n x_l \frac{\partial \zeta_l}{\partial x_l}.
\]
Since $\sum x_l \frac{\partial \zeta_l}{\partial x_l} \in K(b/q, b; c+e)$, the theorem follows by a similar recursive argument.
\end{proof}

\begin{theorem}\label{T: 51}
For $b$ a rational number satisfying $1/(p-1) < b \leq p/(p-1)$, we have
\[
K(b/q, b)  = W(b/q, b) \oplus \sum_{l=1}^n D_{l, t^q} K(b/q, b).
\]
\end{theorem}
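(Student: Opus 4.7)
The sum decomposition $K(b/q, b) = W(b/q, b) + \sum_l D_{l, t^q} K(b/q, b)$ follows immediately from Theorem \ref{T: 50} by taking unions over $c \in \bb R$, so the only content of the statement is that the sum is direct. My plan is to reduce directness to the direct sum decomposition already established in Theorem \ref{T: 34} for the larger space $\c C_0(\c O_{0, q})$, by embedding the relevant level-zero pieces of $K(b/q, b)$ into $\c C_0(\c O_{0, q})$ after clearing denominators with a suitable power of $\tilde{\pi}$.

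Suppose $\omega = \sum_{l=1}^n D_{l, t^q} \zeta_l$ with $\omega \in W(b/q, b)$ and $\zeta_l \in K(b/q, b)$; the goal is to conclude $\omega = 0$. First I will choose a nonnegative integer $N$ large enough that $\tilde{\pi}^N \omega \in W(b/q, b; 0)$ and $\tilde{\pi}^N \zeta_l \in K(b/q, b; 0)$ for every $l$, which is possible because multiplication by $\tilde{\pi}^N$ shifts the valuation parameter $c$ by a positive amount proportional to $N$. Since $D_{l, t^q}$ commutes with the scalar $\tilde{\pi}^N$, the identity $\tilde{\pi}^N \omega = \sum_l D_{l, t^q}(\tilde{\pi}^N \zeta_l)$ persists after this rescaling.

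The crux is to verify the inclusions $K(b/q, b; 0) \subset \c C_0(\c O_{0, q})$ and $W(b/q, b; 0) \subset \sum_{v \in B} \c O_{0, q} \pi^{w(v)} v$, which is precisely where the strict inequality $b > 1/(p-1)$ is essential. A coefficient $A(\gamma, \mu)$ of $t^\gamma x^\mu$ in an element of $K(b/q, b; 0)$ satisfies $ord_p A(\gamma, \mu) \geq b W_q(\gamma, \mu)$, so $A(\gamma, \mu) \pi^{-W_q(\gamma, \mu)}$ has $ord_p$ at least $(b - 1/(p-1)) W_q(\gamma, \mu) \geq 0$, placing it in $\bb Z_q[\tilde{\pi}]$; moreover this quantity tends to $\infty$ as either $\gamma$ or $\mu$ grows, giving the required $p$-adic decay. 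The same computation, restricted to basis monomials $v \in B$, gives $W(b/q, b; 0) \subset \sum_{v \in B} \c O_{0, q} \pi^{w(v)} v$.

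Once the inclusions are in place, $\tilde{\pi}^N \omega = \sum_l D_{l, t^q}(\tilde{\pi}^N \zeta_l)$ becomes an equation inside $\c C_0(\c O_{0, q})$ whose left-hand side lies in $\sum_{v \in B} \c O_{0, q} \pi^{w(v)} v$ and whose right-hand side lies in $\sum_l D_{l, t^q} \c C_0(\c O_{0, q})$. The directness in Theorem \ref{T: 34} then forces $\tilde{\pi}^N \omega = 0$, whence $\omega = 0$. I expect the main obstacle to be the bookkeeping across the nested definitions of $\c O_{0, q}$, $\c C_0(\c O_{0, q})$, $K(b/q, b; c)$, and $W(b/q, b; c)$; once the coefficient-level dictionary translating ``$A(\gamma, \mu) \pi^{-W_q(\gamma, \mu)} \in \bb Z_q[\tilde{\pi}]$ with appropriate decay'' into membership in $\c C_0(\c O_{0, q})$ is pinned down, the remainder of the argument is essentially mechanical.
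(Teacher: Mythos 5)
Your argument is correct and follows essentially the same route as the paper's proof: after the sum decomposition is read off from Theorem~\ref{T: 50}, the directness is reduced (after a $\tilde\pi$-rescaling to level $c=0$, which the paper phrases as ``without loss of generality'') to the inclusion $K(b/q, b; 0) \subset \c C_0(\c O_{0,q})$ guaranteed by $b > 1/(p-1)$, and then to the direct-sum decomposition of Theorem~\ref{T: 34}. The one small refinement in the paper is that it observes $\zeta_l$ can be taken in $K(b/q, b; e) \subset K(b/q, b; 0)$ rather than merely $K(b/q, b; 0)$, but this is immaterial to the conclusion.
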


\begin{proof}
By the previous theorem, it only remains to show the sum on the right is direct. Suppose on the contrary $\omega = \sum_{l=1}^n D_{l, t^q} \zeta_l$. Without loss of generality, we may assume both $\omega \in K(b/q, b; 0)$ and each $\zeta_l \in K(b/q,b; e) \subset K(b/q, b; 0)$. Since $b > 1/(p-1)$,
\[
K(b/q, b; 0) \subset \c C_0(\c O_{0,q}),
\]
so that $\omega = \sum_{l=1}^n D_{l, t} \zeta_l$ implies $\omega = 0$ and $\zeta_l = 0$ for every $l$ by Theorem \ref{T: 34}.
\end{proof}

We are now able to provide the following estimates for the entries of the Frobenius. First, note that
\[
\alpha_1(x^\mu) \in K(b/p, b; -(b/p)w(\mu)),
\]
and so by Theorem \ref{T: 50}, we may write
\[
\alpha_1(x^\mu) = \sum_{v \in B} A_{\mu, v} v \qquad \text{mod } \sum_{l=1}^n D_{l, t^p} K(b/p, b)
\]
with
\begin{equation}\label{E: FrobEstimate}
A_{\mu, v} \in L(b/p; (b/p) ( p w(v) - w(\mu) )).
\end{equation}

%%%-----------------------------------------------------------------------
\subsection{$L$-functions of the toric family}\label{SS: LofToric}

In this section, we will apply Theorem \ref{T: MainDworkTheorem} to prove Theorem \ref{T: IntroMain}.

Let $\lambda \in \overline{\bb F}_q^{*s}$, with $deg(\lambda) := [\bb F_q(\lambda) : \bb F_q]$. Using Dwork's splitting function, define an additive character $\Theta : \bb F_q \rightarrow \overline{\bb Q}_p$ by $\Theta := \theta(1)^{Tr_{\bb F_q / \bb F_p}(\cdot)}$, and $\Theta_\lambda := \Theta \circ Tr_{\bb F_q(\lambda) / \bb F_q}$. Define
\[
S_r(\lambda)  := \sum_{x \in \bb F_{q^{r deg(\lambda)}}^{*n}} \Theta_\lambda \circ Tr_{\bb F_{q^{r deg(\lambda)}} / \bb F_q(\lambda)} \bar G( x, \lambda)
\]
and its $L$-function
\[
L(\bar G_\lambda, T) := L(\bar G_\lambda, \Theta, \bb G_m^n / \bb F_q(\lambda), T) := \exp \left( \sum_{r = 1}^\infty S_r(\lambda) \frac{T^r}{r} \right).
\]
Let $\hat \lambda$ be the Teichm\"uller representative of $\lambda$. Let $\c O_{0, \hat \lambda} = \bb Z_q[\tilde \pi, \hat \lambda]$. There is an obvious ring map, which we call the specialization map at $\hat \lambda$, from $\c O_0$ to $\c O_{0, \hat \lambda}$ induced by the map sending $t \mapsto \hat \lambda$. Similarly, let $\c C_{0, \hat \lambda}$ be the $\c O_{0, \hat \lambda}$-module obtained by specializing the space $\c C_0$ at $t = \hat \lambda$. Let $\alpha_{\hat \lambda} := \alpha^{deg(\lambda)} | _{t = \hat \lambda}$, and define $Frob_{\hat \lambda}^i$ as in (\ref{E: 42a}) but with $\alpha$ replaced by $\alpha_{\hat \lambda}$.  Then using Dwork's trace formula, we obtain
\begin{align*}
S_r(\lambda) &= (q^{r deg(\lambda)} - 1)^n Tr(\alpha_{\hat \lambda} \mid \c C_{0, \hat \lambda}) \\
&= \sum_{i=0}^n (-1)^i Tr( H^i(Frob_{\hat \lambda}^i)^r \mid H^i(\c C_{0, \hat \lambda}, \nabla_{\bar G(x, \hat \lambda)})).
\end{align*}
Since cohomology is acyclic by \cite{AdolpSperb-ExponentialSumsand-1989} except in top dimension $n$, we have
\[
S_r(\lambda) = (-1)^n Tr( H^n(Frob_{\hat \lambda}^n)^r \mid H^n(\c C_{0, \hat \lambda}, \nabla_{\bar G(x, \hat \lambda)})).
\]
In other words, writing $\bar \alpha_{\hat \lambda}$ for $H^n(Frob_{\hat \lambda}^n)$, we have
\begin{align*}
L(\bar G_\lambda, T)^{(-1)^{n+1}} &= det(1 - \bar \alpha_{\hat \lambda} T) \\
&= (1 - \pi_1(\lambda) T) \cdots (1 - \pi_N(\lambda) T),
\end{align*}
where $N = n! \> vol(\Delta_\infty(\bar f))$. In \cite{AdolpSperb-ExponentialSumsand-1989}  it is proved for each such $\lambda$ that the Newton polygon of $L(\bar G_\lambda, T)^{(-1)^{n+1}} $ lies over the Newton polygon (using $ord_{\hat \lambda}$) of
\begin{equation}\label{E: NPfibre}
\prod_{\beta \in \c B} ( 1- (q^{deg(\lambda)})^{w(\beta)} T).
\end{equation}

For each $\lambda \in (\overline{\bb F}_q^\times)^s$, set $\c A(\lambda) := \{\pi_i(\lambda)\}_{i=1}^N$ the collection of eigenvalues of $\bar \alpha_{\hat \lambda}$. Let $\c L$ be a linear algebra operation. Let $\c L \c A(\lambda)$ be the set of eigenvalues of $\c L \bar \alpha_{\hat \lambda}$. Define
\[
L(\c L \c A, \bb G_m^s / \bb F_q, T) := \prod_{\lambda \in |\bb G_m^s / \bb F_q|} \ \    \prod_{\tau(\lambda) \in \c L \c A(\lambda)} (1 - \tau(\lambda) T^{deg(\lambda)})^{-1}.
\]
To aid the reader, we will consider a running example throughout this section; if $\c L$ is the operation of the $k$-th symmetric power tensor the $l$-th exterior power, then
\begin{align*}
\c L \c A(\lambda) &= Sym^k \c A(\lambda) \otimes \wedge^l \c A(\lambda) \\
&= \{ \pi_1(\lambda)^{i_1} \cdots \pi_N(\lambda)^{i_N} \pi_{j_1}(\lambda) \cdots \pi_{j_l}(\lambda) \mid i_1 + \cdots + i_N = k, 1 \leq j_1 < \cdots < j_l \leq N \}.
\end{align*}
Note, the cardinality of $\c L \c A(\lambda)$ is independent of $\lambda$; let $\c L N$ denote this number.

Let $\c B = \{ x^{\mu_1}, \ldots, x^{\mu_N} \}$ be a basis for $H^n := H^n(\Omega^\bullet(\c C_0, \nabla_G))$. For $q$ a power of the prime $p$ (perhaps $q = p^0$) define
\[
\c L H_q^n := \c L H^n( \Omega^\bullet(\c C_0(\c O_{0,q}), \nabla_{G(x, t^q)})).
\]
This is a free $\c O_{0,q}$-module with basis $\c L \c B = \{ e^{\b i} \}_{\b i \in I}$, for some index set $I$. Note, $\c L \c B$ is a basis of $\c L H_q^n$ for every prime power $q$. In our example $\c L H^n = Sym^k H^n \otimes \wedge^l H^n$, elements in the basis $\c L \c B$ take the form
\[
e^{\b i} = (x^{\mu_1})^{i_1} \cdots (x^{\mu_N})^{i_N} \otimes (x^{\mu_{j_1}} \wedge \cdots \wedge x^{\mu_{j_l}})
\]
where
\[
i_1 + \cdots + i_N = k \quad \text{and} \quad 1 \leq j_1 < \cdots < j_l \leq N.
\]
We extend the Frobenius map to this space by defining
\[
\c L \bar \alpha := \c L H^n(Frob^n): \c L H^n \rightarrow \c L H^n_q.
\]
Let $B(t)$ be the matrix of $\c L \bar \alpha$ with respect to the basis $\c L \c B$. Let $\f A(t)$ be the matrix of $\bar \alpha$ with respect to the basis $\c B$, then the matrix of $\bar \alpha_{\hat \lambda}$ is $\f A_{\hat \lambda} := \f A(\hat \lambda^{q^{deg(\lambda) - 1}}) \cdots \f A(\hat \lambda^q) \f A(\hat \lambda)$. Similarly the matrix of $\c L \bar \alpha_{\hat \lambda}$ using the basis $\c L \c B$ is $\c L \f A_{\hat \lambda}$. We have
\[
B(\hat \lambda^{q^{deg(\lambda)-1}}) \cdots B(\hat \lambda^q) B(\hat \lambda) = \c L \f A_{\hat \lambda}.
\]
Since the set of eigenvalues of $\c L \f A_{\hat \lambda}$ is $\c L \c A(\lambda)$, we have
\[
det(1 - B(\hat \lambda^{q^{deg(\lambda)-1}}) \cdots B(\hat \lambda^q) B(\hat \lambda) T^{deg(\lambda)}) = \prod_{\tau(\hat \lambda) \in \c L \c A(\lambda)}(1 - \tau(\hat \lambda) T^{deg(\lambda)}).
\]
Consequently,
\begin{align*}
L(\c L \c A, \bb G_m^s / \bb F_q, T) &:= \prod_{\lambda \in |\bb G_m^s / \bb F_q|} \ \ \prod_{\tau(\hat \lambda) \in \c L \c A(\lambda)}(1 - \tau(\hat \lambda) T^{deg(\lambda)} )^{-1} \\
&= \prod_{\lambda \in | \bb G_m^s / \bb F_q|} det(1 - B(\hat \lambda^{q^{deg(\lambda)}}) \cdots B(\hat \lambda^q) B(\hat \lambda) T^{deg(\lambda)})^{-1} \\
&=: L(B, \bb G_m^s, T)
\end{align*}
as in (\ref{E: LBdef}).

\begin{proposition}
$L( \c L \c A, \bb G_m^s / \bb F_q,  T)$ is a rational function over $\bb Q(\zeta_p)$.
\end{proposition}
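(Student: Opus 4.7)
The plan is to upgrade the $p$-adic meromorphy of the $L$-function to rationality by producing a finite-dimensional cohomological model over the base. Having identified $L(\c L \c A, \bb G_m^s/\bb F_q, T)$ with $L(B, \bb G_m^s/\bb F_q, T)$, where $B(t)$ is the $\c L N \times \c L N$ matrix of $\c L \bar\alpha$, and having noted (via the Frobenius estimate (\ref{E: FrobEstimate}) applied to the basis $\c L \c B$) that the entries of $B$ lie in $L(b/p,b)$ so that condition (\ref{E: uniform overconvergent}) is satisfied, Dwork's trace formula (\ref{E: Dworktrace}) writes
\[
L(B,\bb G_m^s/\bb F_q, T)^{(-1)^{s+1}} = \prod_{i=0}^s \det(1 - q^i F_B T)^{(-1)^i\binom{s}{i}}
\]
as an alternating product of $p$-adic entire Fredholm determinants, giving meromorphy. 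To upgrade to rationality, these infinite-dimensional Fredholm determinants must be replaced by characteristic polynomials on finite-dimensional spaces.

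To accomplish this I would construct a base de Rham complex with coefficients in $\c L H^n$. Define the Dwork operators $\nabla^{\text{base}}_j := t_j \partial/\partial t_j + \pi t_j \partial H(x,t)/\partial t_j$ on $\c C_0$; closedness of the one-form $\pi dH$ in both $x$ and $t$ ensures they induce a Gauss--Manin-type connection on the finite-rank $\c O_0$-module $\c L H^n$ whose $s$ components pairwise commute, and hence assemble into a length-$s$ Koszul complex
\[
\Omega^\bullet_t(\c L H^n) := \bigoplus_{1 \leq k_1 < \cdots < k_i \leq s} \c L H^n \>\frac{dt_{k_1}}{t_{k_1}} \wedge \cdots \wedge \frac{dt_{k_i}}{t_{k_i}}.
\]
Adapt the polyhedral reduction of Theorem \ref{T: 8} to the base: replace $\Delta(\bar f)$ by the relative polytope $\Gamma$ and replace the regular sequence $\{x_i \partial \bar f/\partial x_i\}$ by the analogous sequence extracted from the top-$\Gamma$-weight terms of the $t$-part of $\bar G^{(1)}$ in the graded ring $\bar S$. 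The minimality of $\Gamma$ among polytopes satisfying (\ref{E: weightbound}), built into the definition (\ref{E: RelPolyDef}), is precisely what guarantees the required nondegeneracy of this sequence; one then concludes, as in Theorem \ref{T: 17} (and using Theorem \ref{T: A'} to lift from the graded reduction), that $\Omega^\bullet_t(\c L H^n)$ is acyclic except in top dimension $s$ and that $H^s$ is a finite-dimensional $K$-vector space.

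The Frobenius $\c L \bar\alpha$, twisted by appropriate powers of $q$ as in (\ref{E: 42a}), induces a chain endomorphism of $\Omega^\bullet_t(\c L H^n)$. Dwork's trace formula applied on the base then gives
\[
L(B,\bb G_m^s/\bb F_q, T)^{(-1)^{s+1}} = \prod_{i=0}^s \det\bigl(1 - q^{s-i} \c L \bar\alpha_* T \bigm| H^i(\Omega^\bullet_t(\c L H^n))\bigr)^{(-1)^i},
\]
a finite alternating product of polynomials in $1 + T\bb Z_q[\pi][T]$. Hence $L$ is rational over $\bb Q_q(\zeta_p)$. Galois invariance under $\mathrm{Gal}(\bb Q_q(\zeta_p)/\bb Q_p(\zeta_p))$, which holds because $L$ is defined by an Euler product over closed $\bb F_q$-points of $\bb G_m^s$, then descends the coefficients to $\bb Q_p(\zeta_p) = \bb Q(\zeta_p)$.

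The main obstacle is the acyclicity-plus-finiteness step for the base complex $\Omega^\bullet_t(\c L H^n)$. Whereas the fiber-side argument of Theorem \ref{T: 8} leveraged the explicit nondegeneracy hypothesis on $\bar f$, the base-side argument must derive the regularity of the analogous sequence purely from the combinatorial minimality of $\Gamma$ and the uniform overconvergent estimate (\ref{E: FrobEstimate}), and then lift this regularity from the graded ring $\bar S$ to the overconvergent ring $\c O_0$ following the pattern of Theorems \ref{T: 16}, \ref{T: 17}, and \ref{T: A'}.
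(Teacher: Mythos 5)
Your proposal takes a genuinely different route from the paper, and it contains a gap that I do not see how to close under the paper's hypotheses. The crux of your argument is the claim that the base Koszul complex $\Omega^\bullet_t(\c L H^n)$, built from the operators $t_j \frac{\partial}{\partial t_j} + \pi t_j \frac{\partial H}{\partial t_j}$ acting on the finite free $\c O_0$-module $\c L H^n$, is acyclic except in degree $s$ with finite-dimensional $H^s$. You assert that the minimality of $\Gamma$ among polytopes satisfying (\ref{E: weightbound}) "guarantees the required nondegeneracy" of the analogous regular sequence in $\bar S$. It does not: the definition (\ref{E: RelPolyDef}) of $\Gamma$ is purely combinatorial and only optimizes the weight function $w_\Gamma$; the paper imposes no nondegeneracy hypothesis whatsoever on the $t$-dependence of $\bar P$, so the top-$\Gamma$-weight part of the $t$-derivatives of $\bar G$ need not form a regular sequence in $\bar S$, and the graded reduction argument of Theorem \ref{T: 8} has no analogue on the base. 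Computing this total/base cohomology is precisely what the paper does \emph{not} do (the introduction explicitly defers "the calculation of the relevant $p$-adic cohomology" for families to a future article), and a finiteness theorem for it would require additional hypotheses on the family. A secondary issue: even if your finite alternating product existed, it would give rationality over the local field $\bb Q_q(\zeta_p)$, and your Galois descent lands in $\bb Q_p(\zeta_p)$, which is not the number field $\bb Q(\zeta_p)$; some archimedean input is still needed to land in a number field.

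The paper's proof avoids all of this with a much softer argument: (i) the Euler factors $\prod_{\tau(\lambda)}(1-\tau(\lambda)T^{deg(\lambda)})$ have coefficients in $\bb Z[\zeta_p]$, so the power series has coefficients in the fixed number field $\bb Q(\zeta_p)$; (ii) the archimedean weight bounds of \cite{AdolpSperb-ExponentialSumsand-1989} and \cite{Denef-Loeser-Weights_of_exponential_sums} give $|N_m|_{\bb C} \leq m \, \c L N \cdot q^{|\c L| n m/2}$, hence a positive complex radius of convergence; (iii) the Dwork trace formula (\ref{E: Dworktrace}) gives $p$-adic meromorphy. Rationality then follows from the Borel--Dwork criterion. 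If you want to salvage your approach, you would need to either impose a nondegeneracy condition on $\bar G$ as a function on $\bb G_m^{n+s}$ (changing the scope of the theorem) or prove a finiteness theorem for the overconvergent $F$-isocrystal $R^n\pi_{2!}$ on the base, neither of which is available here.
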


\begin{proof}
Writing $L(\c L \c A, \bb G_m^s / \bb F_q, T) = \exp \left( \sum_{m=1}^\infty N_m \frac{T^m}{m} \right)$, we have
\[
N_m = \sum_{\lambda \in (\bb F_{q^m}^*)^s} \sum_{\tau(\lambda) \in \c L \c A(\lambda)} deg(\lambda) \tau(\lambda)^{m / deg(\lambda)}.
\]
Since $\bar G_\lambda$ is nondegenerate for each $\lambda$, by \cite{AdolpSperb-ExponentialSumsand-1989} and \cite{Denef-Loeser-Weights_of_exponential_sums} each eigenvalue has Archimedean weight at most $n$. Embedding into $\bb C$, this means $|\pi_i(\lambda)|_{\bb C} \leq q^{n \> deg(\lambda)/2}$. Since each $\tau(\lambda)$ is a product of $|\c L|$ eigenvalues $\pi_i(\lambda)$, with each factor having weight at most $n$, we have $|N_m|_{\bb C} \leq m \c L N \cdot q^{|\c L| n m/ 2}$. Thus, $L(\c L \c A, \bb G_m^s / \bb F_q, T)$ has a positive radius of convergence over $\bb C$.

Next, since $L(\bar G_\lambda, T)$ is a rational function over $\bb Z[\zeta_p]$, the polynomial $\prod_{\tau(\lambda) \in \c L \c A(\lambda)} (1 - \tau(\lambda) T^{deg(\lambda)})$ has coefficients in $\bb Z[\zeta_p]$. It follows that the coefficients of the power series expansion of $L(\c L \c A, \bb G_m^s / \bb F_q, T)$ lie in a fixed number field $\bb Q(\zeta_p)$. Since $L(\c L \c A, \bb G_m^s / \bb F_q, T)$ is both $p$-adic meromorphic, as in (\ref{E: Dworktrace}) above, and converges on a disc of positive radius over $\bb C$, rationality follows from the Borel-Dwork theorem \cite[Section 4]{Dwork-rationality_of_zeta_function}.
\end{proof}

Define a weight on each basis vector in $\c L \c B = \{ e^{\b i} \}_{\b i \in I}$ as follows. Let
\begin{equation}\label{E: weightBasis}
w(\b i) := \sum_{i=1}^N m_i w(\mu_i)
\end{equation}
where $x^{\mu_i}$ appears $m_i$-times in the basis element $e^{\b i}$. For example, continuing our running example, if
\[
e^{\b i} = (x^{\mu_1})^{i_1} \cdots (x^{\mu_N})^{i_N} \otimes (x^{\mu_{j_1}} \wedge \cdots \wedge x^{\mu_{j_l}})
\]
then
\[
w(\b i) = i_1 w(\mu_1) + \cdots + i_N w(\mu_N) + w(\mu_{j_1}) + \cdots + w(\mu_{j_l}).
\]

\begin{proposition}\label{P: Relate A and B}
There exists a matrix $A = (A_{\b i, \b j})_{\b i, \b j \in I}$ with entries in $L(\frac{1}{p-1})$ such that
\begin{equation}\label{E: BFrobDecomp}
B(t) = A^{\sigma^{a-1}}(t^{p^{a-1}}) \cdots A^\sigma(t^p) A(t) \quad \text{and} \quad A_{\b i, \b j} \in L(1/(p-1) ; \frac{1}{p-1}(p w(\b j) - w(\b i))).
\end{equation}
\end{proposition}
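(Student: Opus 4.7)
The plan is to take $A(t)$ to be the matrix of the relative $p$-th power Frobenius on $\c L H^n$, namely the map $\c L H^n(Frob_1^n)$ expanded with respect to the basis $\c L \c B$ used to represent $B$. Concretely, for each $\b i \in I$, write
\[
\c L H^n(Frob_1^n)(e^{\b i}) = \sum_{\b j \in I} A_{\b i, \b j}(t)\, e^{\b j}
\]
in $\c L H^n(\Omega^\bullet(\c C_0(\c O_{0,p}), \nabla_{G(x,t^p)}))$. The factorization assertion will then follow by interpreting $\alpha = \alpha_1^a$ at the level of cohomology and keeping track of the $\sigma^{-1}$-semilinearity together with the substitution $t \to t^p$ dictated by the target complex in (\ref{E: 43a}).

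More precisely, I would introduce for each $i = 0, 1, \ldots, a-1$ a twisted Frobenius $\alpha_1^{(i)}$ between consecutive complexes in the chain
\[
\Omega^\bullet(\c C_0(\c O_0), \nabla_{G(x,t)}) \to \Omega^\bullet(\c C_0(\c O_{0,p}), \nabla_{G(x,t^p)}) \to \cdots \to \Omega^\bullet(\c C_0(\c O_{0,q}), \nabla_{G(x,t^q)})
\]
whose matrix with respect to $\c L \c B$ is obtained from $A(t)$ by substituting $t \to t^{p^i}$ and twisting $\bb Z_q$-coefficients by $\sigma^i$. Using $F_a(x,t) = \prod_{i=0}^{a-1} F^{\sigma^i}(x^{p^i}, t^{p^i})$ and the commutation $\psi_p(g(x^p)h(x)) = g(x)\psi_p(h)$, the composite $\alpha_1^{(a-1)} \circ \cdots \circ \alpha_1^{(0)}$ coincides with $\c L H^n(Frob^n)$, and the standard rule for composing semi-linear maps gives exactly $B(t) = A^{\sigma^{a-1}}(t^{p^{a-1}}) \cdots A^\sigma(t^p) A(t)$.

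For the bound on the entries, the starting point is the single-monomial estimate (\ref{E: FrobEstimate}), which with $b = p/(p-1)$ yields matrix entries $A^0_{\mu, \nu}$ of $\alpha_1$ in the basis $\c B$ lying in $L(1/(p-1);\, \tfrac{1}{p-1}(pw(\nu) - w(\mu)))$. Each element of $\c L \c B$ is canonically obtained from an $r$-tuple of constituents in $\c B$ with $r = |\c L|$, and because $\c L$ is a quotient of $\bigotimes^r$ and Frobenius on a tensor power is the tensor product of Frobenii, each entry $A_{\b i, \b j}(t)$ is a finite sum of products $A^0_{\mu_1, \nu_1}(t) \cdots A^0_{\mu_r, \nu_r}(t)$ over admissible matchings of the constituents $x^{\mu_k}$ of $e^{\b i}$ and $x^{\nu_k}$ of $e^{\b j}$. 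Combining the single-factor estimates multiplicatively using $L(1/(p-1); c_1) \cdot L(1/(p-1); c_2) \subset L(1/(p-1); c_1+c_2)$ and the identities $\sum_k w(\mu_k) = w(\b i)$, $\sum_k w(\nu_k) = w(\b j)$ from (\ref{E: weightBasis}), each such product lies in $L(1/(p-1);\, \tfrac{1}{p-1}(pw(\b j) - w(\b i)))$, and hence so does the finite sum $A_{\b i, \b j}$.

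The main obstacle is the bookkeeping in the second paragraph: precisely tracking the $\sigma$-twists and $t \to t^{p^i}$ substitutions so that the $a$-fold composite of semi-linear Frobenii really is $A^{\sigma^{a-1}}(t^{p^{a-1}}) \cdots A^\sigma(t^p) A(t)$, with the correct directions of $\sigma$ and the correct order of multiplication, rather than some variant. The remainder is essentially combinatorial unpacking of $\c L$ together with a direct application of (\ref{E: FrobEstimate}).
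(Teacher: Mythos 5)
Your proposal is correct and follows the same two-stage strategy as the paper: take $A(t)$ to be the matrix of $\c L\bar\alpha_1$ on $\c L H^n$ with respect to $\c L\c B$; derive the factorization $B(t)=A^{\sigma^{a-1}}(t^{p^{a-1}})\cdots A^\sigma(t^p)A(t)$ from $\bar\alpha(t)=\bar\alpha_1(t^{p^{a-1}})\circ\cdots\circ\bar\alpha_1(t)$ (itself a consequence of $F_a=\prod F^{\sigma^i}(x^{p^i},t^{p^i})$ and the $\psi_p$ commutation) and functoriality of $\c L$; then obtain the entry estimate by taking $b=p/(p-1)$ in (\ref{E: FrobEstimate}) and propagating through $\c L$ using the additivity of weights in (\ref{E: weightBasis}) and the multiplicativity $L(b';c_1)\cdot L(b';c_2)\subset L(b';c_1+c_2)$. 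The paper compresses the last step by introducing the weighted spaces $\c L\c W(b/q,b;c)$, but it rests on exactly the same combinatorial unpacking of $\c L$ as a quotient of an $r$-fold tensor power that you spell out.
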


\begin{proof}
From (\ref{E: FormalFrob}), writing $\alpha_1(t)$ for $\alpha_1$ and $\alpha(t)$ for $\alpha$, and writing $\psi_{x, p}$ for $\psi_p$ acting only on the $x$ variables, we have
\begin{align}\label{E: FrobDecomp1}
\alpha(t) &= \psi_{x, p}^a \circ F_a(x, t) \notag\\
&= \psi_{x, p}^a \circ F^{\sigma^{a-1}}(x^{p^{a-1}}, t^{p^{a-1}}) \cdots F^{\sigma}(x^p, t^p) F(x, t) \notag\\
&= \left( \sigma^{-1} \circ \psi_{x, p} \circ F(x, t^{p^{a-1}}) \right) \circ \cdots \circ \left( \sigma^{-1} \circ \psi_{x, p} \circ F(x, t^p) \right) \circ \left( \sigma^{-1} \circ \psi_{x, p} \circ F(x, t) \right) \notag\\
&= \alpha_1(t^{p^{a-1}}) \circ \cdots \circ \alpha_1(t^p) \circ \alpha_1(t).
\end{align}
Denote by $\bar \alpha(t)$ and $\bar \alpha_1(t)$ the maps $H^n(Frob^n)$ and $H^n(Frob_1^n)$, respectively. Then (\ref{E: FrobDecomp1}) shows
\[
\bar \alpha(t) = \bar \alpha_1(t^{p^{a-1}}) \circ \cdots \circ \bar \alpha_1(t^p) \circ \bar \alpha_1(t).
\]
Consequently,
\begin{equation}\label{E: LFrobDecomp}
\c L \bar \alpha(t) = \c L \bar \alpha_1(t^{p^{a-1}}) \circ \cdots \circ \c L \bar \alpha_1(t^p) \circ \c L \bar \alpha_1(t).
\end{equation}
Let $A(t)$ be the matrix of $\c L \bar \alpha_1$ with respect to the basis $\c L \c B$. Then the matrix version of (\ref{E: LFrobDecomp}) is (\ref{E: BFrobDecomp}).

We now proceed to the estimates on $A(t)$. We extend the weight function (\ref{E: q-weight}) as follows
\[
W_q(\gamma, \b i) := w_{q \Gamma}(\gamma) + w(\b i) \in \frac{1}{D(q)} \bb Z_{\geq 0}
\]
for $\gamma \in M(\Gamma)$ and $\b i \in I$. Define the spaces, for $q$ a power of $p$ (perhaps with $q = p^0$) and $c \in \bb R$,
\begin{align*}
\c L \c W(b/q, b; c) &:= \left\{ \sum_{\gamma \in M(\Gamma), \b i \in I} A(\gamma, \b i) t^\gamma e^{\b i} \mid A(\gamma, \b i) \in \bb Q_q(\tilde \pi), ord_p(A(\gamma, \b i)) \geq b W_q(\gamma, \b i) + c \right\} \\
\c L W(b/q, b) &:= \bigcup_{c \in \bb R} \c L W(b/q, b ; c).
\end{align*}
Then, for any rational $b$ satisfying $1/(p-1) < b \leq p/(p-1)$, by Theorem \ref{T: 51} and (\ref{E: FrobEstimate}),
\[
\c L \bar \alpha_1: \c L W(b/p, b/p; 0) \rightarrow \c L W(b/p, b; 0)
\]
and
\[
A_{\b i, \b j} \in L(\frac{b}{p}; \frac{b}{p}( p w(\b j) - w(\b i) )).
\]
Setting $b = p/(p-1)$ to get the best possible $p$-adic estimates, we have
\[
A_{\b i, \b j} \in L(\frac{1}{p-1} ; \frac{1}{p-1}(p w(\b j) - w(\b i))).
\]
\end{proof}

\begin{proof}[Proof of Theorem \ref{T: IntroMain}]
If we modify the basis $\c L \c B$ by the following normalization, $\tilde e^{\bf i} := \pi^{w(\b i)} e^{\b i}$ for each $\b i \in I$, then the matrix of $\c L \bar \alpha_1$ with respect to this basis takes the form $\tilde A(t) = (A_{\b i, \b j} \pi^{w(\b i) - w(\b j)} )$, with entries satisfying
\[
\tilde A_{\b i, \b j} := A_{\b i, \b j} \pi^{w(\b i) - w(\b j)} \in L(1/(p-1) ; w(\b j)).
\]
Then if $\tilde B(t)$ is the matrix of $\c L \bar \alpha$ with respect to this basis, we have
\[
\tilde B(t) = \tilde A^{\sigma^{a-1}}(t^{p^{a-1}}) \cdots \tilde A^\sigma(t^p) \tilde A(t).
\]
We employ Proposition \ref{P: Relate A and B} with the basic data $b = \frac{1}{p-1}$, ramification $e = p-1$, $\{ s(\b j) = (p-1) w(\b j) \}_{j \in I}$. Since $L(B, \bb G_m^s / \bb F_q, T) = L(\tilde B, \bb G_m^s / \bb F_q, T)$, we may apply Theorem \ref{T: MainDworkTheorem} to obtain Theorem \ref{T: IntroMain} and Theorem \ref{T: IntroMainAffine}{\it (a)}. Parts {\it (b)} and {\it (c)} of Theorem \ref{T: IntroMainAffine} follow immediately; see \cite[p.557]{AdolpSperb-Newtonpolyhedraand-1987}. What remains is the determination of $k$ (and consequently $\rho$) in the statement of Theorem \ref{T: MainDworkTheorem}{\it (c)}. We are indebted to Nick Katz for the following argument, which will show that $k = s + n |\c L|$.

Let $\f L_\Theta$ be the $\ell$-adic sheaf on $\bb A^1 / \bb F_q$ corresponding to the character $\Theta$ of $\bb F_q$. Viewing $\bar G$ as a map $\bar G: \bb G_m^n \times \bb G_m^s \rightarrow \bb A^1$ defined over $\bb F_q$, let $\f L_{\Theta(\bar G)}$ be the pullback of $\f L_\Theta$ to $\bb G_m^n \times \bb G_m^s$. Let $\pi_2: \bb G_m^n \times \bb G_m^s \rightarrow \bb G_m^s$ be the projection onto the second factor. It follows that $R^i \pi_{2!} \f L_{\Theta(\bar G)} = 0$ for every $i \not = n$ (since by \cite{AdolpSperb-ExponentialSumsand-1989} and \cite{Denef-Loeser-Weights_of_exponential_sums} every stalk is zero). Further, these same references show that $R^n \pi_{2!} \f L_{\Theta(\bar G)}$ has constant rank equal to $n! \> vol \Delta_\infty(\bar f)$. The sheaf $R^n \pi_{2!} \f L_{\Theta(\bar G)}$ is, in Katz's terminology, of perverse origin \cite[Corollary 6]{Katz-semicontinuity}. Since this sheaf has constant rank, it follows from \cite[Proposition 11]{Katz-semicontinuity} that $R^n \pi_{2!} \f L_{\Theta(\bar G)}$ is lisse. It makes sense therefore to apply a linear algebra operation such as $\c L$ to this sheaf. We view $\c L$ as a quotient of some $r$-fold tensor product;  the minimum such $r$ we denote by $|\c L|$. Then $\c L R^n \pi_{2!} \f L_{\Theta(\bar G)}$ is mixed with weights $\leq |\c L| n$. The eigenvalues of Frobenius acting on $H_{c, \text{\'et}}^i(\bb G_m^s / \bb F_q, \c L R^n \pi_{2!} \f L_{\Theta(\bar G)})$ have weights $\leq i + |\c L| n$ for any $i$ in the range $0 \leq i \leq 2s$. All eigenvalues are algebraic integers so that the weight of any eigenvalue $\gamma$ bounds the valuation:
\[
0 \leq ord_q \gamma \leq \text{ weight}(\gamma).
\]
Thus for $i \leq s$, the $p$-divisibility of any eigenvalue of Frobenius acting on $H^i_{c, \text{\'et}}(\bb G_m^s / \bb F_q, \c L R^n \pi_{2!} \f L_{\Theta(\bar G)})$ satisfies
\[
ord_q \gamma \leq s + |\c L| n.
\]
We now show this inequality holds as well for eigenvalues of Frobenius on $H^i_{c, \text{\'et}}(\bb G_m^s / \bb F_q, \c L R^n \pi_{2!} \f L_{\Theta(\bar G)})$ with $i$ in the upper range $s < i \leq 2s$. For these, we invoke the work of Deligne \cite[Corollary 3.3.3]{Deligne-Weil-II} which applies since $\c L R^n \pi_{2!} \f L_{\Theta(\bar G)}$ is an integral sheaf. For $\gamma$ an eigenvalue of Frobenius acting on $H^i_{c, \text{\'et}}(\bb G_m^s / \bb F_q, \c L R^n \pi_{2!} \f L_{\Theta(\bar G)})$ with $s < i \leq 2s$, Deligne's result implies that $\gamma/q^{i-s}$ is an algebraic integer and pure of some weight. Thus, as above,
\[
ord_q( \gamma / q^{i-s} ) \leq \text{ weight}(\gamma / q^{i-s}).,
\]
so that
\begin{align*}
ord_q(\gamma) - (i-s) &\leq \text{ weight}(\gamma) - 2(i-s) \\
&\leq i + |\c L|n - 2(i-s).
\end{align*}
Thus, $ord_q \> \gamma \leq s + |\c L| n$.
\end{proof}

%%%-----------------------------------------------------------------------
\section{Other families}

%%%-----------------------------------------------------------------------
\subsection{Affine and Mixed Toric families}\label{SS: Mixed}

We now state two related theorems which follow from the work and results above in a well-known manner (see \cite{AdolpSperb-ExponentialSumsand-1989} and \cite{Libgober-Sperber-zeta_function_of_monodromy}). Let $\bar f(x) \in \bb F_q[x_1^\pm, \ldots, x_r^\pm, x_{r+1}, \ldots, x_n]$ and set $\c S := \{x_1, \ldots, x_n\}$, $\c S_1 := \{x_1, \ldots, x_r\}$, $\c S_2 := \{ x_{r+1}, \ldots, x_n\}$. We will continue to assume the hypotheses {\it 1.} and {\it 2.}, $dim \Delta_\infty(\bar f) = n$ and $\bar f$ nondegenerate with respect to $\Delta_\infty(\bar f)$, on $\bar f$ from the beginning of Section \ref{S: Toric Family}. We say $\bar f$ is \emph{convenient with respect to $\c S_2$} if for all subsets $A \subset \c S_2$, $dim \Delta_\infty(\bar f_A) = n - |A|$, where $\bar f_A$ is the Laurent polynomial in $n-|A|$ variables obtained by setting each variable $x_i$ with $i \in A$ equal to zero. We will also use the notation $\bar M^{(A)}$ (and $\c C_0^{(A)}$ respectively) for the elements in $\bar M$ (and $\c C_0$) which have support in the set of monomials $x^\mu = x_1^{\mu_1} \cdots x_n^{\mu_n}$ in $M(\bar f)$ satisfying $\mu_i \geq 1$ for every $i \in A$. Then $\bar M^{(A)}$ and $\c C_0^{(A)}$ are ideals in $\bar M$ and $\c C_0$ respectively. Define
\begin{equation}\label{E: 21}
\Omega^i( \c C_0, \c S_2, \nabla_{\bar G} ) := \bigoplus_{A = \{1 \leq j_1 < \cdots < j_i \leq n\}} \c C_0^{(A\cap \c S_2)} \frac{dx_{j_1}}{x_{j_1}} \wedge \cdots \wedge \frac{dx_{j_i}}{x_{j_i}}.
\end{equation}
Since for $\eta \in \c C_0^{(A\cap \c S_2)}$, $l \in \c S$, $D_{l, t}(\eta) \in \c C_0^{((A \cup \{l\}) \cap \c S_2)}$, it follows then that $\nabla_{\bar G}$ defined above defines as well the boundary operator for the subcomplex $\Omega^\bullet( \c C_0, \c S_2, \nabla_{\bar G})$ of $\Omega^\bullet( \c C_0, \nabla_{\bar G})$. In an entirely analogous manner we define subcomplexes $\Omega^\bullet( \bar M, S_2, \nabla_{\bar G})$ of $\Omega^\bullet( \bar M, \nabla_{\bar G})$ and $\Omega^\bullet( \bar M, \c S_2, \nabla_{d \bar G^{(1)}\wedge})$ of $\Omega^\bullet( \bar M, \nabla_{d \bar G^{(1)} \wedge})$ respectively.

\begin{theorem}\label{T: 22}
For $i \not= n$, $H^i(\Omega^\bullet) = 0$ for all three complexes $\Omega^\bullet( \c C_0, \c S_2, \nabla_{\bar G})$, $\Omega^\bullet( \bar M, \c S_2, \nabla_{\bar G})$, and $\Omega^\bullet( \bar M, \c S_2, \nabla_{d \bar G^{(1)} \wedge})$ defined above. Furthermore $H^n(\Omega^\bullet( \c  C_0, \c S_2, \nabla_{\bar G}))$ is a free $\c O_0$-module of rank
\[
\upsilon_{\c S_2}(\bar f) = \sum_{A \subset \c S_2} (-1)^{|A|} (n - |A|)! vol_A( \Delta_\infty(\bar f_A)),
\]
where $vol_A( \Delta_\infty(\bar f_A))$ is the volume with respect to Lebesgue measure on $\bb R^n_A := \{ x = (x_1, \ldots, x_n) \in \bb R^n \mid x_i = 0 \text{ if } i \in A \}$.

Furthermore, we set $\bar B^{(\c S_2)} = \bigcup_{i \in \frac{1}{D}\bb Z_{\geq 0}} \bar B^{(\c S_2, i)}$ where $\bar B^{(\c S_2, i)}$ is a subset of
the set of monomials in $\bar R^{(\c S_2)}$ of weight $i$ such that $\bar V^{(\c S_2, i)}$, the $\bb F_q$-space spanned by $\bar B^{(\c S_2, i)}$, satisfies
\[
\bar R^{(\c S_2, i)} = \bar V^{(\c S_2, i)} \oplus \sum_{l=1}^n x_l \frac{\partial \bar f}{\partial x_l} \bar R^{(\c S_2 - \{l\}, i-1)}.
\]
(Here $\c S_2 - \{l\}$ equals $\c S_2$ if $l \not\in \c S_2$.) Then
\begin{equation}\label{E: 23}
\c C_0^{(\c S_2)} = \sum_{v \in \bar B^{(\c S_2)}} \c O_0 \pi^{w_\Gamma(v)} v \oplus \sum_{l=1}^n D_{l,t} \c C_0^{(\c S_2 - \{l\})}.
\end{equation}
\end{theorem}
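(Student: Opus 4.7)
The plan is to mimic the chain of arguments in Theorems \ref{T: 8}, \ref{T: 16}, and \ref{T: 17}: first establish acyclicity and compute cohomology for the graded ``leading-term'' complex $\Omega^\bullet(\bar M, \c S_2, \nabla_{d \bar G^{(1)}\wedge})$, then deform to the full complex $\Omega^\bullet(\bar M, \c S_2, \nabla_{\bar G})$ by a $w_\Gamma$-weight induction, and finally lift to $\Omega^\bullet(\c C_0, \c S_2, \nabla_{\bar G})$ via Theorem \ref{T: A'}. The new ingredient relative to Section \ref{S: Toric Family} is that we are forced to work inside the graded ideals $\bar M^{(A \cap \c S_2)}$, so that the Koszul-type arguments must be made compatible with the filtration by these ideals.

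First I would establish the fiberwise regular-sequence result: for each $A \subset \c S = \{1,\ldots,n\}$, the set $\{x_l \partial \bar f/\partial x_l\}_{l \in A}$ acts as a regular sequence on the graded pieces of the ideal $\bar R^{(A \cap \c S_2)}$, and the associated Koszul subcomplex is acyclic except in top dimension. This is essentially the content of Sections 2 and 5 of \cite{AdolpSperb-ExponentialSumsand-1989}: convenience of $\bar f$ with respect to $\c S_2$ guarantees that at every face of $\Delta_\infty(\bar f)$ meeting all the affine hyperplanes the corresponding critical locus has the expected codimension, so the Kouchnirenko-type dimension computation goes through and gives a Jacobian quotient of dimension $(n-|A|)!\,vol_A(\Delta_\infty(\bar f_A))$ for each $A \subset \c S_2$. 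The rank formula $\upsilon_{\c S_2}(\bar f)$ then follows by inclusion-exclusion over $A \subset \c S_2$ applied to the chain of ideals $\bar R^{(A)}$, and the basis $\bar B^{(\c S_2)}$ is produced inductively in $|A|$ by choosing at each stage a complement of $\sum_l x_l (\partial \bar f/\partial x_l)\,\bar R^{(\c S_2 - \{l\},\,i-1)}$ inside $\bar R^{(\c S_2, i)}$.

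Next I would promote these statements to $\bar M$ by an induction on $w_\Gamma$-weight identical to the one in Theorem \ref{T: 8}: writing a hypothetical syzygy in $\bar M^{(A \cap \c S_2)}$, one extracts the lowest-$w_\Gamma$ component, notes that it is a sum of terms $t^\gamma \bar \xi_{i,\gamma}(x)$ lying in $\bar R^{(A \cap \c S_2)}$ for each fixed $\gamma$, and applies the fiberwise regularity to peel off a skew-symmetric tail; the same bookkeeping proves both that $\Omega^\bullet(\bar M, \c S_2, \nabla_{d \bar G^{(1)}\wedge})$ is acyclic outside dimension $n$ and that $H^n$ is the free $\bar S$-module on $\bar B^{(\c S_2)}$. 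Passage to $\Omega^\bullet(\bar M, \c S_2, \nabla_{\bar G})$ is another $w_\Gamma$-induction as in Theorem \ref{T: 16}, since $\bar D_{l,t}$ agrees with multiplication by $x_l \partial \bar G^{(1)}/\partial x_l$ modulo strictly lower $w_\Gamma$-weight and both operators respect the filtration $A \mapsto \bar M^{(A \cap \c S_2)}$. Finally, one lifts to characteristic zero by applying Theorem \ref{T: A'} termwise: each $\c C_0^{(A \cap \c S_2)}$ is $\tilde\pi$-adically complete, separated, and flat over $\c O_0$, with reduction $\bar M^{(A \cap \c S_2)}$, so the reduced complex is exactly $\Omega^\bullet(\bar M, \c S_2, \nabla_{\bar G})$ and both parts of that theorem apply, giving (\ref{E: 23}) and the freeness of $H^n(\Omega^\bullet(\c C_0, \c S_2, \nabla_{\bar G}))$ over $\c O_0$ of rank $\upsilon_{\c S_2}(\bar f)$.

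The main obstacle, as I see it, is verifying the regular-sequence property uniformly along the restricted ideals $\bar R^{(A \cap \c S_2)}$: one cannot naively reuse the Koszul argument on all of $\bar R$, because the operator $x_l \partial \bar f/\partial x_l$ behaves differently depending on whether $l \in \c S_2$ (where multiplication preserves strict positivity of the $l$-th affine coordinate) or $l \in \c S_1$ (where it need not), and the interplay with the wedge index $A$ must be tracked carefully. Convenience of $\bar f$ with respect to $\c S_2$ is precisely the geometric hypothesis that ensures the critical loci along each coordinate substratum have the correct codimension, so the whole bookkeeping goes through; the rest of the argument is then a faithful adaptation of Sections 3.1 and 3.2 of the present paper.
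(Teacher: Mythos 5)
Your proposal reconstructs exactly the route the paper signals: Theorem~\ref{T: 22} is stated there without proof, with the remark that it ``follows from the work and results above in a well-known manner,'' pointing to \cite{AdolpSperb-ExponentialSumsand-1989} and \cite{Libgober-Sperber-zeta_function_of_monodromy}. Your three-step schema --- fiberwise Koszul-type acyclicity for the restricted subcomplex in $\bar R$, a $w_\Gamma$-weight induction to pass to $\bar M$ and then to $\nabla_{\bar G}$, and a lift to $\c C_0$ via Theorem~\ref{T: A'} applied termwise to the ideals $\c C_0^{(A \cap \c S_2)}$ --- is precisely what that remark is inviting the reader to carry out, and you have correctly identified convenience with respect to $\c S_2$ as the hypothesis that makes the fiberwise dimension count and inclusion--exclusion formula for $\upsilon_{\c S_2}(\bar f)$ work.

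One small caution on phrasing: what is actually needed from \cite[\S2]{AdolpSperb-ExponentialSumsand-1989} is not quite that $\{x_l\,\partial\bar f/\partial x_l\}_{l\in A}$ is a regular sequence ``on the graded pieces of $\bar R^{(A\cap\c S_2)}$''; rather, it is the acyclicity (except in top degree) of the subcomplex whose $i$-th term is the direct sum over $|A|=i$ of $\bar R^{(A\cap\c S_2)}$. Because the wedge index $A$ and the ideal superscript $A\cap\c S_2$ move together, this is not literally a Koszul complex of a regular sequence on a fixed module; the inductive argument in Adolphson--Sperber handles the interplay, and the convenience hypothesis is what guarantees the correct codimensions stratum by stratum. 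With that adjustment, your argument is a faithful and complete rendering of the intended proof, so I see no gap.
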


As we did in the previous section, let $\lambda \in \overline{\bb F}_q^{*s}$, with $deg(\lambda) := [\bb F_q(\lambda) : \bb F_q]$.  Define the exponential sums
\[
S_l(\lambda)  := \sum_{x \in \bb F_{q^{l deg(\lambda)}}^{*r} \times \bb F_{q^{l deg(\lambda)}}^{n-r}} \Theta_\lambda \circ Tr_{\bb F_{q^{l deg(\lambda)}} / \bb F_q} \bar G( x, \lambda)
\]
and the associated $L$-function on $\bb A^{n-r} \times \bb G_m^{r} / \bb F_q(\lambda)$:
\[
L(\bar G_\lambda, \bb A^{n-r} \times \bb G_m^{r} / \bb F_q(\lambda), T) := \exp \left( \sum_{r = 1}^\infty S_r(\lambda) \frac{T^r}{r} \right).
\]
Let $\hat \lambda$ be the Teichm\"uller representative of $\lambda$. Let $\c O_{0, \lambda}$ be the ring $\bb Z_q[\pi, \hat \lambda]$, and let $\c C_{0, \lambda}$ be the $\c O_{0, \lambda}$-module obtained by specializing the space $\c C_0$ at $t = \hat \lambda$. Let $\alpha_{\hat \lambda} := \alpha^{deg(\lambda)} | _{t = \hat \lambda}$, and define $Frob_{\hat \lambda}^i$ as in (\ref{E: 42a}) but with $\alpha$ replaced by $\alpha_{\hat \lambda}$. Then $Frob_{\hat \lambda}^\bullet$ is a chain map on $\Omega^\bullet(\c C_{0, \lambda}, \c S_2, \nabla_{\bar G(x, \lambda)})$.  Write $\bar \alpha_{\hat \lambda}$ for $H^n(Frob_{\hat \lambda}^n)$ acting on $H^n(\Omega^\bullet(\c C_{0, \hat \lambda}, \c S_2, \nabla_{\bar G(x, \hat \lambda)}))$. Then Theorem \ref{T: 22} says
\begin{align*}
L(\bar G_\lambda, \bb A^{n-r} \times \bb G_m^{r} / \bb F_q(\lambda), T)^{(-1)^{n+1}} &= det(1 - \bar \alpha_{\hat \lambda} T) \\
&= (1 - \pi_1(\lambda) T) \cdots (1 - \pi_{\upsilon_{\c S_2}(\bar f)}(\lambda) T).
\end{align*}

For each $\lambda$, set $\c A_{\c S_2}(\lambda) := \{\pi_i(\lambda)\}_{i=1}^{\upsilon_{\c S_2}(\bar f)}$. Let $\c L$ be a linear algebra operation. Define
\[
L(\c L \c A_{\c S_2}, \bb G_m^s / \bb F_q, T) := \prod_{\lambda \in |\bb G_m^s / \bb F_q|} \prod_{\tau(\lambda) \in \c L \c A_{\c S_2}(\lambda)} (1 - \tau(\lambda) T^{deg(\lambda)})^{-1}.
\]
By a similar method to that of the previous section, we have:

\begin{theorem}\label{T: MixedCase}
For each linear algebra operation $\c L$, the $L$-function $L(\c L\c A_{\c S_2}, \bb G_m^s / \bb F_q, T)$ is a rational function:
\[
L(\c L \c A_{\c S_2}, \bb G_m^s / \bb F_q, T)^{(-1)^{s+1}} = \frac{ \prod_{i=1}^R (1 - \alpha_i T) }{ \prod_{j=1}^S (1 - \beta_j T)} \in \bb Q(\zeta_p)(T).
\]
Further, if we let $N' := \upsilon_{\c S_2}(\bar f)$ then
\begin{enumerate}
\item[(a)] the reciprocal zeros and poles $\alpha_i$ and $\beta_j$ are algebraic integers, and for each $j$, $\beta_j = q^{n_j} \alpha_j$ for some positive integers $n_j$.
\item[(b)] If $\tilde s < s$ then $R = S$, else if $\tilde s = s$ then
\[
0 \leq R - S \leq s! \> vol(\Gamma) \c L N'.
\]
\item[(c)] the total degree is bounded above by
\[
R+S \leq \c L N' \cdot \tilde s! \> vol(\Gamma) \cdot 2^{ \tilde s  + (1 + \frac{1}{\tilde s})n |\c L|} (1 + 2^{1 + \frac{1}{\tilde s}})^{s}.
\]
\item[(d)] If $\bar G \in \bb F_q[x_1, \ldots, x_n, t_1, \ldots, t_s]$, then $L(\c L \c A_{\c S_2}, \bb A^s / \bb F_q, T)$ is a rational function, and writing
\[
L(\c L \c A_{\c S_2}, \bb A^s / \bb F_q, T) = \frac{ \prod_{i=1}^{R'} (1 - \alpha_i T)}{ \prod_{j=1}^{S'}(1 - \beta_j T)} \in \bb Q(\zeta_p)(T),
\]
the zeros and poles satisfy $ord_q(\alpha_i)$ and $ord_q(\beta_j)$ $\geq w(\Gamma) + w(\c L B^{(\c S_2)})$, where $w(\c L B^{(\c S_2)})$ is the minimum weight  of the basis $\c L B^{(\c S_2)}$ (see (\ref{E: weightBasis})). Similar bounds to those found in Theorem \ref{T: IntroMainAffine} for degree and total degree may be given.
\end{enumerate}
\end{theorem}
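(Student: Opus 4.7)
The plan is to parallel the argument given in Section \ref{SS: LofToric} for the pure toric case, with the cohomology of the full complex $\Omega^\bullet(\c C_0, \nabla_{\bar G})$ replaced throughout by that of the subcomplex $\Omega^\bullet(\c C_0, \c S_2, \nabla_{\bar G})$. First, since $\bar G(x,t)$ has only non-negative powers of the variables in $\c S_2$ (by hypothesis), the Frobenius $\alpha_1 = \sigma^{-1} \circ \psi_p \circ F(x,t)$ preserves the ideal $\c C_0^{(A \cap \c S_2)}$ for every $A \subset \c S$. In particular $Frob_1^\bullet$ and $Frob^\bullet$ restrict to chain endomorphisms of $\Omega^\bullet(\c C_0, \c S_2, \nabla_{\bar G})$, and after specialization at $\hat\lambda$ the Dwork trace formula combined with Theorem \ref{T: 22} yields
\[
L(\bar G_\lambda, \bb A^{n-r} \times \bb G_m^r / \bb F_q(\lambda), T)^{(-1)^{n+1}} = \det(1 - \bar\alpha_{\hat\lambda} T) = \prod_{i=1}^{\upsilon_{\c S_2}(\bar f)} (1 - \pi_i(\lambda) T).
\]

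Next I would apply the linear algebra operation $\c L$ to the cohomology, choose the basis $\c L \c B^{(\c S_2)}$ coming from (\ref{E: 23}), and write down the matrix $B(t)$ of $\c L \bar\alpha$ with respect to it, giving $L(\c L \c A_{\c S_2}, \bb G_m^s / \bb F_q, T) = L(B, \bb G_m^s / \bb F_q, T)$ exactly as in Section \ref{SS: LofToric}. Rationality over $\bb Q(\zeta_p)$ follows from the Borel--Dwork theorem by the same convergence and coefficient argument (weights of $\pi_i(\lambda)$ remain bounded by $n$ via Adolphson--Sperber and Denef--Loeser applied to the mixed complex). To factorize $B(t) = A^{\sigma^{a-1}}(t^{p^{a-1}}) \cdots A^\sigma(t^p) A(t)$ with $A(t)$ the matrix of $\c L \bar\alpha_1$, I need the analogues of Theorems \ref{T: 46}, \ref{T: 47}, and \ref{T: 50} for the subspaces $K^{(\c S_2)}(b/q, b; c)$ defined by restricting supports to monomials positive along $\c S_2$. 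These go through verbatim once one observes: (i) each $x_i \partial G^{(1)}/\partial x_i$ with $i \in \c S_2$ also has positive exponent along $\c S_2 \setminus \{i\}$ so the reduction in (\ref{E: 23}) iterates; (ii) multiplication by $\pi H_l$ and application of $D_{l,t^q}$ move $\c C_0^{(\c S_2)}$ into $\c C_0^{(\c S_2 \setminus \{l\})}$, so the combinatorics of the Koszul--type identity in Theorem \ref{T: 22} is preserved under the recursive replacement argument. This yields entries $A_{\b i, \b j} \in L(1/(p-1); \tfrac{1}{p-1}(p w(\b j) - w(\b i)))$, exactly as in Proposition \ref{P: Relate A and B}.

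With the normalized matrix $\tilde A(t) = (A_{\b i, \b j} \pi^{w(\b i) - w(\b j)})$ in hand, I would apply Theorem \ref{T: MainDworkTheorem} with $b = 1/(p-1)$, $e = p-1$, and $s(\b j) = (p-1) w(\b j)$ to obtain parts (a), (b), (c). For the constant $k$ governing part (c), I would repeat Katz's $\ell$-adic argument: the sheaf $R^n \pi_{2!} \f L_{\Theta(\bar G)}$ on $\bb G_m^s$ now has constant rank $\upsilon_{\c S_2}(\bar f)$ by Theorem \ref{T: 22}, is therefore lisse and of perverse origin, and its fibre-by-fibre weights are bounded by $n$, so the same weight computation on $H^i_{c,\text{\'et}}(\bb G_m^s/\bb F_q, \c L R^n \pi_{2!} \f L_{\Theta(\bar G)})$ gives $\operatorname{ord}_q \gamma \leq s + |\c L| n$. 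Part (d) is handled by switching to the affine trace formula (as in Theorem \ref{T: MainDworkTheorem}(d)); the lower bound $w(\Gamma) + w(\c L B^{(\c S_2)})$ arises because the offset $\tfrac{1}{p-1}(p w(\b j) - w(\b i))$ in the estimates on $A_{\b i,\b j}$ gives $\operatorname{ord}_p(A) \geq w(\c L B^{(\c S_2)})$, which adds to $b(p-1) w(\Gamma) = w(\Gamma)$ in the conclusion of Theorem \ref{T: MainDworkTheorem}(d).

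The main obstacle is the verification of the subcomplex analogues of Theorems \ref{T: 46}--\ref{T: 50}. The decomposition (\ref{E: 23}) is a refinement of Theorem \ref{T: 17} which respects the grading along $\c S_2$, but the proofs of Theorems \ref{T: 46}--\ref{T: 50} use an iteration in $w_\Gamma$-weight that must be checked to preserve the $\c S_2$-positivity conditions at every step. The key point is the stability identity $D_{l, t^q}(\c C_0^{(\c S_2)}) \subset \c C_0^{(\c S_2 \setminus\{l\})}$ together with the observation that the ``correction'' terms in the iteration are built from $x_l \partial G^{(1)}/\partial x_l$ for $l \in \c S_2$ (which remain in the appropriate ideal); everything else transcribes mechanically from the arguments already given.
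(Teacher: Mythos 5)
Your overall strategy is the same as the paper's (which itself only says ``by a similar method to that of the previous section''), namely: replay the machinery of Section \ref{SS: LofToric} with the subcomplex $\Omega^\bullet(\c C_0,\c S_2,\nabla_{\bar G})$ in place of the full complex, use Theorem \ref{T: 22} and the decomposition (\ref{E: 23}) where Theorem \ref{T: 17} was used, push the $\c S_2$-positivity through the iteration in Theorems \ref{T: 46}--\ref{T: 50}, then feed the resulting overconvergent data (with $b = 1/(p-1)$, $e=p-1$, $s(\mathbf j) = (p-1) w(\mathbf j)$) into Theorem \ref{T: MainDworkTheorem}. The arithmetic in your final paragraph for parts (b)--(d) is correct, and so is your adaptation of the Katz argument with $R^n\pi_{2!}\f L_{\Theta(\bar G)}$ now lisse of rank $\upsilon_{\c S_2}(\bar f)$.

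Two points in your justification of the subcomplex analogues of Theorems \ref{T: 46}--\ref{T: 50} are stated backwards, even though both lead to correct conclusions and are easily repaired. In (i) you claim that $x_i\frac{\partial \bar G^{(1)}}{\partial x_i}$ with $i\in\c S_2$ ``has positive exponent along $\c S_2\setminus\{i\}$''; this is false (it need only have positive exponent along $\{i\}$). The correct way to see that the correction terms stay in the $\c S_2$-ideal is that in the product $\pi G_j^{(1)}(x,t^q)\zeta_j$ the positivity in the $j$-th coordinate comes from $G_j^{(1)}$ while the $\c S_2\setminus\{j\}$-positivity is carried by $\zeta_j\in K^{(\c S_2-\{j\})}$; thus the product lies in $K^{(\c S_2)}$ and so does the iterated error term. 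Similarly in (ii), the inclusion $D_{l,t^q}\bigl(\c C_0^{(\c S_2)}\bigr)\subset\c C_0^{(\c S_2\setminus\{l\})}$ is the trivial containment; the inclusion you actually need for the argument, the one stated in the paper before (\ref{E: 21}), is $D_{l,t^q}\bigl(\c C_0^{(A\cap\c S_2)}\bigr)\subset\c C_0^{((A\cup\{l\})\cap\c S_2)}$, whose instance $D_{l,t^q}\bigl(\c C_0^{(\c S_2-\{l\})}\bigr)\subset\c C_0^{(\c S_2)}$ is what makes the direct-sum decomposition (\ref{E: 23}) compatible with the replacement step. With those two directions corrected, the proof is sound and matches the intended argument.
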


%%%-----------------------------------------------------------------------
\subsection{Pure families}\label{SS: Pure}

Let $\lambda \in (\bb F_q^*)^s$. In the case $\c S = \c S_2$, Adolphson and Sperber \cite{AdolpSperb-ExponentialSumsand-1989} observed that in the absolute case that the $L$-function of the exponential sum defined on $\bb A^n$ by $\bar G(x, \lambda)$ was identified with the highest weight factor of the $L$-function of the exponential sum defined by the same $\bar G$ on $\bb G_m^n$. This observation was generalized to more general simplicial toric sums \cite{Adolphson-Sperber-Exp-sums-on-the-torus}. We give now a relative version of this highest weight factor result. Once more let $\bar f \in \bb F_q[x_1^\pm, \ldots, x_n^\pm]$ with $dim \Delta_\infty(\bar f) = n$. Let $\sigma_0$ be the unique face of $\Delta_\infty(\bar f)$ containing the origin which spans a linear subspace of smallest dimension; denote this dimension by $dim(\sigma_0)$. If the origin is an interior point of $\Delta_\infty(\bar f)$ then $\sigma_0 = \Delta_\infty(\bar f)$. We say $\Delta_\infty(\bar f)$ is {\it simplicial} with respect to the origin if $\sigma_0$ is contained in exactly $n - dim(\sigma_0)$ faces of codimension $n - 1$ of $\Delta_\infty(\bar f)$. This is always satisfied when $n=2$. It is also always true when the origin is an interior point of $\Delta_\infty(\bar f)$. Note that if $\Delta_\infty(\bar f)$ is simplicial with respect to the origin, then this holds as well for $\bar G$.

Using notation which is consistent with the previous section, we will write $dim(\sigma_0) = n - r$. Let the equations of the hyperplanes $\{ H_i\}_{i=r+1}^{n}$ spanned by each of the codimension one faces $\{ \sigma_i \}_{i=r+1}^{n}$ of $\Delta_\infty(\bar f)$ containing the origin be given by
\[
l_i(x_1, \ldots, x_n) := \sum_{j=1}^n a_{i,j} x_j = 0 \qquad r+1 \leq i \leq n,
\]
where $a_{i,j} \in \bb Z$ and for each $r+1 \leq i \leq n$, $gcd(a_{i,r+1}, \ldots, a_{i,n}) = 1$. We assume the inequalities $l_i(x_1, \ldots, x_n) \geq 0$ define $Cone(\bar f)$. Let $L$ be the greatest common divisor of all the $(n-r) \times (n-r)$ subdeterminants of the $(n-r) \times n$ integer matrix $A := (a_{i,j})_{r+1 \leq i \leq n, 1 \leq j \leq n}$. If $p \nmid L$ then we may find $n - r$ columns, say the last $n - r$ columns for convenience of notation, so that the matrix $A$ may be written in block form as $(A_1 \mid A_2)$ with $A_2$ a square $(n-r) \times (n-r)$ matrix with integer determinant which is relatively prime to $p$.

Write $\c S = \{1, \ldots, n\}$, $\c S_1 = \{1, \ldots, r\}$, and $\c S_2 = \{r+1, \ldots, n\}$. Let $I$ be the $r \times r$ identity matrix and let
\[
\tilde A :=
\left(
\begin{array}{cc}
  I & 0   \\
  A_1 & A_2
\end{array}
\right),
\]
an $n \times n$ matrix with entries $\tilde a_{i,j}$, so $\tilde a_{i, j} = a_{i, j}$ for $i \geq r+1$. For $i \in \c S$, write
\[
l_i(x_1, \ldots, x_n) := \sum_{j=1}^n \tilde a_{i,j} x_j
\]
and set
\begin{equation}\label{E: Ds}
\c D_{i, t} := 
\begin{cases}
D_{i, t} & \text{for } i < r+1 \\
\sum_{j=1}^n \tilde a_{ij} D_{j, t} & \text{for } i \geq r+1
\end{cases}
\qquad \text{and} \qquad 
\bar{\c D}_{i, t} =
\begin{cases}
\bar D_{i, t} & \text{for } i < r+1 \\
\sum_{j=1}^n \bar a_{ij} \bar D_{j, t} & \text{for } i \geq r+1.
\end{cases}
\end{equation}
Define the complex $\Omega^\bullet(\c C_0, \tilde \nabla_{\bar G})$ as follows. It has the same spaces as those of the complex $\Omega^\bullet(\c C_0, \nabla_{\bar G})$ but with boundary map on $\Omega^i$ defined by (\ref{E: Ds}):
\[
\tilde \nabla_{\bar G}( \eta \frac{dx_{k_1}}{x_{k_1}} \wedge \cdots \wedge \frac{dx_{k_i}}{x_{k_i}}) := \left( \sum_{l=1}^n \c D_{l,t}(\eta) \frac{d x_l}{x_l} \right) \wedge \frac{dx_{k_1}}{x_{k_1}} \wedge \cdots \wedge \frac{dx_{k_i}}{x_{k_i}}.
\]
The reduction modulo $\pi$ of this complex is the complex $\Omega^\bullet(\bar M, \tilde \nabla_{\bar G})$ with the same $\Omega^i$ space as $\Omega^\bullet(\bar M, \nabla_{\bar G})$ but with boundary map
\[
\tilde \nabla_{\bar G}( \eta \frac{dx_{k_1}}{x_{k_1}} \wedge \cdots \wedge \frac{dx_{k_i}}{x_{k_i}}) := \left( \sum_{l=1}^n \bar{\c D}_{l,t}(\eta) \frac{d x_l}{x_l} \right) \wedge \frac{dx_{k_1}}{x_{k_1}} \wedge \cdots \wedge \frac{dx_{k_i}}{x_{k_i}}.
\]

\begin{theorem}\label{T: 25}
If $p \nmid L$, then $\Omega^\bullet(\c C_0, \tilde \nabla_{ G})$ and $\Omega^\bullet(\c C_0, \nabla_{G})$ are isomorphic as $\c O_0$-modules, and $\Omega^\bullet(\bar M, \tilde \nabla_{\bar G})$ and $\Omega^\bullet(\bar M, \nabla_{\bar G})$ are isomorphic as $\bar S$-algebras.
\end{theorem}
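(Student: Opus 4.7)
The plan is to exhibit an explicit chain isomorphism induced by the matrix $\tilde A$ itself. The hypothesis $p \nmid L$ is used precisely to guarantee that the last $n-r$ columns of $A$ may be chosen so that the square block $A_2$ satisfies $\det A_2 \in \bb Z_q^{\times}$. Then $\det \tilde A = \det A_2$ is a $p$-adic unit, so $\tilde A \in GL_n(\bb Z_q) \subset GL_n(\c O_0)$ and its reduction modulo $\pi$ lies in $GL_n(\bb F_q)$. The isomorphism of both complexes will be manufactured from this single linear algebra fact.

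Explicitly, I would define a degree-preserving $\c O_0$-linear map
\[
\phi: \Omega^\bullet(\c C_0, \nabla_{G}) \longrightarrow \Omega^\bullet(\c C_0, \tilde \nabla_{G})
\]
by the identity on $\Omega^0 = \c C_0$ and on the generators of $\Omega^1$ by
\[
\phi\!\left(\frac{dx_l}{x_l}\right) := \sum_{i=1}^n \tilde a_{il}\, \frac{dx_i}{x_i},
\]
extended $\c C_0$-linearly to $\Omega^1$ and then multiplicatively (via $\wedge$) to the entire exterior algebra $\Omega^\bullet$. Invertibility of $\tilde A$ over $\c O_0$ forces $\phi$ to be an isomorphism of $\c C_0$-modules in every degree; invertibility of $\bar{\tilde A}$ over $\bb F_q$ yields the analogous statement after reduction modulo $\pi$.

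The chain condition $\phi \circ \nabla_{G} = \tilde \nabla_{G} \circ \phi$ collapses to the degree-zero identity
\[
\phi\!\left( \sum_{l=1}^n D_{l,t}(\eta)\, \frac{dx_l}{x_l} \right) = \sum_{i=1}^n \Big( \sum_{l=1}^n \tilde a_{il}\, D_{l,t}(\eta) \Big) \frac{dx_i}{x_i} = \sum_{i=1}^n \c D_{i,t}(\eta)\, \frac{dx_i}{x_i},
\]
which is immediate from the definition $\c D_{i,t} = \sum_j \tilde a_{ij} D_{j,t}$ given in (\ref{E: Ds}). In higher degrees the compatibility propagates automatically: both $\nabla_{G}$ and $\tilde \nabla_{G}$ act on $\Omega^i$ by wedging on the left with their respective degree-$1$ outputs, and $\phi$ is by construction an algebra map for $\wedge$. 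Reducing $\phi$ modulo $\tilde \pi$ gives an isomorphism $\Omega^\bullet(\bar M, \nabla_{\bar G}) \xrightarrow{\sim} \Omega^\bullet(\bar M, \tilde \nabla_{\bar G})$ of $\bar S$-modules compatible with the wedge product, obtained by exactly the same argument with $\bar D_{l,t}$, $\bar{\c D}_{l,t}$ and $\bar{\tilde A} \in GL_n(\bb F_q)$ in place of their lifts. The proof has essentially no obstacle: the entire content is the observation that an invertible (over $\bb Z_q$) change of basis on the logarithmic differentials $dx_l/x_l$ transforms the Koszul-type coboundary operator built from $\{D_{l,t}\}$ into the one built from $\{\c D_{l,t}\}$, and the condition $p \nmid L$ is exactly what makes such a change of basis available $p$-integrally.
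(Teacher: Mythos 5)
Your proof is correct, and it is exactly the argument the paper implicitly relies on: the theorem is stated without proof there, the whole content being that $p\nmid L$ makes $\det\tilde A=\det A_2$ a $p$-adic unit, so the change of basis on the logarithmic differentials (equivalently, on the operators $D_{l,t}\mapsto\c D_{l,t}$) is an invertible chain isomorphism both over $\c O_0$ and after reduction mod $\tilde\pi$. No gaps.
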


For $A \subset \c S_2$, we define $\c C_0^{(A)}$ (and $\bar M^{(A)}$ respectively) to be the ideal of elements in $\c C_0$ (respectively in $\bar M$) with support in the monomials $x^\mu = x_1^{\mu_1} \cdots x_n^{\mu_n}$ such that $l_i(\mu) \geq 1$ for all $i \in A$. Then for $A \subset \c S$, $l \in \c S$,
\begin{equation}\label{E: 26}
\c D_{l, t} \c C_0^{(A \cap \c S_2)} \subset \c C_0^{((A \cup \{l\}) \cap \c S_2)} \qquad \text{and} \qquad
\bar{\c D}_{l,t} \bar M^{(A \cap \c S_2)}) \subset \bar M^{((A \cup \{l\}) \cap \c S_2)}.
\end{equation}
We proceed in a manner entirely analogous to Section \ref{SS: Mixed}.

In order to simplify the notation we will denote by $S \tilde \Omega^\bullet(\c C_0)$ the subcomplex of $\Omega^\bullet(\c C_0, \tilde \nabla_{\bar G})$ defined by
\begin{equation}\label{E: 27}
S \tilde \Omega^i := \bigoplus_{A = \{ 1 \leq j_1 < \cdots < j_i \leq n\}} \c C_0^{(A\cap \c S_2)} \frac{d x_{j_1}}{x_{j_1}} \wedge \cdots \wedge \frac{d x_{j_i}}{x_{j_i}}.
\end{equation}
Define in a similar way the subcomplex $S \tilde \Omega^\bullet(\bar M)$ of $\Omega^\bullet(\bar M, \nabla_{\bar G})$. Note, these are subcomplexes because of (\ref{E: 26}).

\begin{theorem}\label{T: 28}
Assume $\bar f$ is nondegenerate with respect to $\Delta_\infty(\bar f)$ and that $\Delta_\infty(\bar f)$ is simplicial with respect to the origin. Assume $p \nmid L$. Then $H^i(S \tilde \Omega^\bullet(\c C_0))$ and $H^i( S \tilde \Omega^\bullet(\bar M))$ are acyclic except in top dimension $n$. Furthermore, $H^n( S \tilde \Omega^\bullet(\c C_0))$ is a free $\c O_0$-module of rank
\[
\upsilon_{\c S_2}(\bar f) = \sum_{A \subset \c S_2} (-1)^{|A|} (n- |A|)! vol_A( \Delta_\infty(\bar f_A))
\]
 where here we let $H_A$ be the intersection of the hyperplanes $\{H_i = 0 \}$ for $i \in A$ and $vol_A(\Delta_\infty(\bar f_A))$ is the volume with respect to Haar measure in $H_A$ normalized so that the fundamental domain for the lattice $\bb Z^n \cap H_A$ in $H_A$ has measure 1.

Let $\bar B^{(\c S_2)} = \bigcup_{i \in \frac{1}{D}\bb Z_{\geq 0}} \bar B^{(\c S_2, i)}$ where $\bar B^{(\c S_2, i)}$ is a subset of the monomials in $\bar R^{(\c S_2, i)} = \bar R^{(\c S_2)} \cap \bar R^{(i)}$ such that $\bar V^{(\c S_2, i)}$ the $\bb F_q$-space spanned by $\bar B^{(\c S_2, i)}$ satisfies
\[
\bar R^{(\c S_2, i)} = \bar V^{(\c S_2, i)} \bigoplus \sum_{j=1}^n l_j(x_1 \frac{\partial \bar f}{\partial x_1}, \ldots, x_n \frac{\partial \bar f}{\partial x_n}) \bar R^{(\c S_2 - \{j\}, i)}
\]
(here $\c S_2 - \{j\} = \c S_2$ if $i \not\in \c S_2$). Then
\[
\c C_0^{(\c S_2)}(\frac{1}{p-1}) = \sum_{v \in \bar B^{(\c S_2)}} \c O_0 \pi^{w_\Gamma(v)} v \oplus \sum_{j=1}^n \c D_{j, t} \c C_0^{(\c S_2 - \{j\})}(\frac{1}{p-1}).
\]
\end{theorem}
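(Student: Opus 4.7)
The plan is to follow the template of Theorems \ref{T: 8}, \ref{T: 16}, \ref{T: 17}, and \ref{T: 22}, adapted to the simplicial setting via Theorem \ref{T: 25}. First, by Theorem \ref{T: 25} the complexes $S\tilde\Omega^\bullet(\c C_0)$ and $S\tilde\Omega^\bullet(\bar M)$ are identified (after a linear change of coordinates encoded in $\tilde A$) with subcomplexes built from the modified operators $\c D_{l,t}$ and $\bar{\c D}_{l,t}$. The point of introducing these operators is that, by (\ref{E: 26}), they respect the ideal structure $\bar M^{(A\cap \c S_2)}$ and $\c C_0^{(A \cap \c S_2)}$ indexed by the hyperplane conditions $l_i(\mu) \geq 1$, which the original $D_{l,t}$ need not.

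Second, I would first dispatch the graded/reduced case by computing the cohomology of the auxiliary Koszul complex $S\tilde\Omega^\bullet(\bar M, d\bar G^{(1)}\wedge)$ in which multiplication is by the forms $l_j(x_1 \partial \bar G^{(1)}/\partial x_1, \ldots, x_n \partial \bar G^{(1)}/\partial x_n)$. The essential input is that, under the simplicial hypothesis together with nondegeneracy of $\bar f$ and $p \nmid L$, the sequence $\{l_j(x_1 \partial \bar f/\partial x_1, \ldots, x_n \partial \bar f/\partial x_n)\}_{j=1}^n$ is a regular sequence on each ideal $\bar R^{(\c S_2 - \{j\})}$. This is the direct analog of the regular sequence property of $\{x_i \partial \bar f/\partial x_i\}$ used in Theorem \ref{T: 8}, and is where the geometric content of the simplicial hypothesis enters: the forms $l_j$ cut out precisely the coordinate hyperplanes of the new system, so that the defining ideals are generated by monomials in the transformed coordinates.

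Third, with the Koszul acyclicity in hand, an inductive argument on $w_\Gamma$-weight, completely parallel to the passage from (\ref{E: G1kernel}) to (\ref{E: skewsymm}) and then to (\ref{E: 8a}) in the proof of Theorem \ref{T: 8}, promotes the result to $S\tilde\Omega^\bullet(\bar M)$ with boundary $\tilde\nabla_{\bar G}$. This produces the decomposition
\[
\bar M^{(\c S_2, i)} = \bar W^{(\c S_2, i)} \oplus \sum_{j=1}^n \bar{\c D}_{j,t}\, \bar M^{(\c S_2 - \{j\}, i-1)},
\]
with $\bar W^{(\c S_2)}$ the free $\bar S$-module on $\bar B^{(\c S_2)}$. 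One then lifts to characteristic zero: each $S\tilde\Omega^i(\c C_0)$ is a flat, separated, complete $\c O_0$-module, its reduction mod $\tilde\pi$ is the graded complex just analyzed, and applying Theorem \ref{T: A'} gives acyclicity of $S\tilde\Omega^\bullet(\c C_0)$ away from degree $n$ together with freeness of $H^n(S\tilde\Omega^\bullet(\c C_0))$ over $\c O_0$ of the stated rank. The decomposition of $\c C_0^{(\c S_2)}$ follows by lifting the characteristic-$p$ decomposition term by term (using the identification $\c C_0 / \tilde\pi\c C_0 \cong \bar M$ of (\ref{E: ReductionOfC}) restricted to ideals), exactly as Theorem \ref{T: 17} is deduced from Theorem \ref{T: 16}.

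Finally, the rank formula $\upsilon_{\c S_2}(\bar f) = \sum_{A\subset \c S_2}(-1)^{|A|}(n-|A|)!\, vol_A(\Delta_\infty(\bar f_A))$ is computed by inclusion-exclusion on the ideals $\bar R^{(A\cap \c S_2)}$, where the lattice normalization in $H_A$ accounts for the simplicial presentation of the faces through the origin. The main obstacle is the regular sequence property for the transformed forms $l_j(x_1 \partial \bar f/\partial x_1, \ldots, x_n \partial \bar f/\partial x_n)$ on the ideals $\bar R^{(\c S_2 - \{j\})}$: the simplicial hypothesis is required so that the matrix $\tilde A$ realizes a genuine change of toric coordinates on the part of the cone through the origin, while $p \nmid L$ ensures invertibility of $\tilde A$ modulo $p$ so that the reduction calculation is faithful. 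Once this regularity is established, all subsequent steps are essentially bookkeeping adaptations of the arguments already given in Sections on Theorems \ref{T: 8}--\ref{T: 17} and \ref{T: 22}.
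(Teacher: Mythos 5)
Your proposal correctly reconstructs the argument the paper intends: pass to the modified operators $\c D_{l,t}$ so that the ideal filtration by the hyperplane conditions $l_i(\mu)\geq 1$ is respected (as in (\ref{E: 26})), use Theorem \ref{T: 25} and $p\nmid L$ to transfer between the twisted and untwisted complexes, establish Koszul acyclicity in the graded situation from the regular-sequence input coming from nondegeneracy and the simplicial hypothesis, promote this to $\tilde\nabla_{\bar G}$ by the weight-induction of Theorem \ref{T: 8}, then lift to $\c C_0$ via Theorem \ref{T: A'} as in Theorem \ref{T: 17}, with the rank formula by inclusion--exclusion on the ideals. The paper itself does not write out these details (it defers to Section \ref{SS: Mixed} and the cited references), and your reconstruction matches the intended template. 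One small imprecision: Theorem \ref{T: 25} identifies the full twisted and untwisted complexes, not the subcomplexes $S\tilde\Omega^\bullet$ directly (which only live on the twisted side because of (\ref{E: 26})); and the regular-sequence statement is more naturally phrased as acyclicity of the filtered Koszul subcomplex on the ideals $\bar R^{(A)}$ rather than ``a regular sequence on each ideal,'' but the substance is right.
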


We proceed in a manner quite analogous to the case in the previous section. For each $\lambda \in (\overline{\bb F}_q^*)^s$, let
\[
L(\bar G_\lambda, \bb G_m^r, T)^{(-1)^{n+1}} = \prod_{i=1}^N (1 - \pi_i(\lambda) T)
\]
with $N := n! \> vol \Delta_\infty(\bar f)$. Set $\c A(\lambda) = \{ \pi_i(\lambda)\}_{i=1}^N$ and let $\c W_n(\lambda)$ be the subset of $\c A(\lambda)$ consisting of reciprocal zeros of highest archimedean weight:
\[
\c W_n(\lambda) := \{ \pi(\lambda) \in \c A(\lambda) \mid | \pi(\lambda)| = q^{deg(\lambda)n/2} \}.
\]
The set $\c W(\lambda)$ now plays precisely the same role as $\c A_{\c S_2}(\lambda)$ in the previous section. For a linear algebraic operation $\c L$, set
\[
L(\c L \c W_n, \bb G_m^s / \bb F_q, T) := \prod_{\lambda \in |\bb G_m^s / \bb F_q|} \prod_{\tau(\lambda) \in \c L \c W_n(\lambda)} (1 - \tau(\lambda) T^{deg(\lambda)})^{-1}.
\]
Since Galois action preserves weight, this is a rational function over $\bb Q(\zeta_p)$ with properties as follows.

\begin{theorem}
For each linear algebraic operation $\c L$, the $L$-function $L(\c L \c W_n, \bb G_m^s / \bb F_q, T)$ is a rational function over $\bb Q(\zeta_p)$ with estimates for its degree, total degree, and for the $p$-divisibility of its reciprocal zeros and poles are precisely the same as those in Theorem \ref{T: MixedCase}. Here $\upsilon_{\c S_2}(\bar f)$ is the same alternating sum of volumes as in Theorem \ref{T: MixedCase} and here $w(\c L \c B^{(\c S_2)})$ is the minimum of the weights in the basis $\c L \c B^{(\c S_2)}$.
\end{theorem}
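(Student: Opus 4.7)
The plan is to mimic the proof of Theorem \ref{T: MixedCase} in Section \ref{SS: Mixed}, but with the relative cohomology of the mixed complex $\Omega^\bullet(\c C_0,\c S_2,\nabla_{\bar G})$ replaced by that of the subcomplex $S\tilde\Omega^\bullet(\c C_0)$ from Theorem \ref{T: 28}. The input that makes this substitution work is the generalization, to the relative setting, of the Adolphson--Sperber identification (in the simplicial absolute case) of the highest archimedean weight factor of a toric $L$-function with an affine-type $L$-function.

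First I would establish the cohomological formula for the pure highest weight factor. For each Teichm\"uller point $\hat\lambda$, Theorem \ref{T: 25} and Theorem \ref{T: 28} (applied fiberwise using the specialization map $\c C_0 \twoheadrightarrow \c C_{0,\hat\lambda}$) give a subcomplex $S\tilde\Omega^\bullet(\c C_{0,\hat\lambda})$, acyclic except in top dimension, whose top cohomology is $\c O_{0,\hat\lambda}$-free of rank $\upsilon_{\c S_2}(\bar f)$. Since the operators $\c D_{l,t}$ are $\bb Z_q$-linear combinations of the $D_{l,t}$ (coefficients controlled by $A_2$, which is invertible mod $p$), the Frobenius $\alpha_{\hat\lambda}$ stabilizes this subcomplex, and the standard simplicial argument (using $p\nmid L$ and nondegeneracy along the faces through the origin) identifies the eigenvalues of $H^n(\mathrm{Frob}^n_{\hat\lambda}|_{S\tilde\Omega^\bullet(\c C_{0,\hat\lambda})})$ with exactly the subset $\c W_n(\lambda)\subset \c A(\lambda)$. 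Hence
\[
\prod_{\pi(\lambda)\in\c W_n(\lambda)}(1-\pi(\lambda)T) = \det\bigl(1 - H^n(\mathrm{Frob}^n_{\hat\lambda})\,T \mid H^n(S\tilde\Omega^\bullet(\c C_{0,\hat\lambda}))\bigr).
\]

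Next, following Section \ref{SS: LofToric} verbatim, I pass to the linear-algebra operation $\c L$: apply $\c L$ to the free $\c O_0$-module $H^n(S\tilde\Omega^\bullet(\c C_0))$, with basis obtained from $\c L\c B^{(\c S_2)}$, and let $B(t)$ be the matrix of $\c L\bar\alpha$ on this module. Then
\[
L(\c L\c W_n,\bb G_m^s/\bb F_q,T) = L(B,\bb G_m^s/\bb F_q,T)
\]
in the sense of (\ref{E: LBdef}). Rationality over $\bb Q(\zeta_p)$ follows by the Borel--Dwork argument exactly as in the toric case: the expansion coefficients lie in $\bb Q(\zeta_p)$ (since each fiber $L$-function does), the series is $p$-adic meromorphic by Dwork's trace formula (\ref{E: Dworktrace}), and it converges on a disc of positive radius over $\bb C$ by the Deligne weight bound on each $\tau(\lambda)\in\c L\c W_n(\lambda)$.

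Finally I would factor $B=A^{\sigma^{a-1}}\cdots A^\sigma A$ and bound the entries of $A$ to invoke Theorem \ref{T: MainDworkTheorem}, just as in Proposition \ref{P: Relate A and B}. After normalizing $\tilde e^{\b i}:=\pi^{w(\b i)}e^{\b i}$ on the basis vectors indexed by $\c L\c B^{(\c S_2)}$, the bound $\tilde A_{\b i,\b j}\in L(1/(p-1);w(\b j))$ follows from the Frobenius estimate (\ref{E: FrobEstimate}) together with the congruence-decomposition of elements of $K(b/q,b;c)$ modulo boundaries---here modulo the image of the operators $\c D_{l,t^q}$ rather than $D_{l,t^q}$. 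With this matrix data in hand, the estimates of Theorem \ref{T: MainDworkTheorem}(a)--(d) and the Katz weight-theoretic bound on $k=s+n|\c L|$ (used in the proof of Theorem \ref{T: IntroMain}) yield precisely the same degree, total-degree, and $p$-divisibility bounds as in Theorem \ref{T: MixedCase}, with $\upsilon_{\c S_2}(\bar f)$ replacing $N$ and $w(\c L\c B^{(\c S_2)})$ replacing the generic basis weight.

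The main obstacle is the last paragraph: one needs the analogues of Theorems \ref{T: 46}--\ref{T: 51} for the ideal $\c C_0^{(\c S_2)}$ with the twisted operators $\c D_{l,t^q}$ in place of $D_{l,t^q}$. This requires checking that the decomposition $\c C_0^{(\c S_2)}=\bigoplus \c O_0\pi^{w(v)}v \oplus \sum \c D_{j,t}\c C_0^{(\c S_2-\{j\})}$ of Theorem \ref{T: 28} lifts to an overconvergent statement with uniform $p$-adic estimates, so that the iterative reduction of a monomial $t^\gamma x^\mu\in K(b/q,b;c)\cap\c C_0^{(\c S_2)}$ to basis form is convergent at the same rate $e_0=1/((p-1)D(q))$ as before. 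This is where the hypothesis $p\nmid L$ (ensuring $A_2$ is $p$-adically invertible) is used in an essential way, and where the simplicial hypothesis keeps the change of variables compatible with the filtration by $w_\Gamma$ and $w$.
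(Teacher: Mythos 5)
Your plan follows the paper's own route: the paper likewise reduces the pure-weight family to the mixed-case machinery by identifying the highest archimedean weight factor of each fiber with the top cohomology of the subcomplex $S \tilde \Omega^\bullet(\c C_0)$ via Theorems \ref{T: 25} and \ref{T: 28}, and then repeats the argument of Sections \ref{SS: LofToric} and \ref{SS: Mixed} essentially verbatim, so your outline (including the flagged need for overconvergent analogues of Theorems \ref{T: 46}--\ref{T: 51} for the twisted operators $\c D_{l,t^q}$) matches and indeed elaborates what the paper does. The one detail worth adding is the paper's remark that Galois action preserves archimedean weight: this is what makes the subset $\c W_n(\lambda)$ Galois-stable, hence guarantees that the partial Euler factors $\prod_{\tau(\lambda) \in \c L \c W_n(\lambda)}(1 - \tau(\lambda) T^{deg(\lambda)})$ have coefficients in $\bb Q(\zeta_p)$ --- your appeal to the rationality of the full fiber $L$-function alone does not quite yield this, since $\c W_n(\lambda)$ is only a proper subset of $\c A(\lambda)$.
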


%%%-----------------------------------------------------------------------
\subsection{Unit root $L$-function}\label{SS: Unitfam}

Let
\[
\bar G(x, t) := \bar f(x) + \bar P(x, t) \in \bb F_q[x_1^\pm, \ldots, x_n^\pm, t_1, \ldots, t_s]
\]
where $\bar f(x)$ is nondegenerate with respect to $\Delta_\infty(\bar f)$. Let $\bar G$ satisfy the hypotheses of the toric family in Section \ref{S: Toric Family}, that is, $dim \> \Delta_\infty(\bar f) = n$, $\bar f$ is nondegenerate with respect to $\Delta_\infty(\bar f)$, and $0 \leq w(\mu) < 1$ for every $x^\mu$ in $Supp(\bar P)$.

For each $\lambda  \in \overline{\bb F}_q^{*s}$ the $L$-function $L( \bar G_\lambda, \Theta, \bb G_m^n / \bb F_q(\lambda), T)$ has a unique unit root, say $\pi_0(\lambda)$. Define the $k$-th moment unit root $L$-function by
\[
L_{\text{unit}}( k, \bar G, \bb A^s / \bb F_q, T) := \prod_{\lambda \in |\bb A^s / \bb F_q|} (1 - \pi_0(\lambda)^k T^{deg(\lambda)})^{-1}.
\]
This is a meromorphic function by Wan's theorem \cite[Theorem 8.4]{Wan-Dwork'sconjectureunit-1999} and so may be written as
\[
L_{\text{unit}}( k, \bar G, \bb A^s / \bb F_q, T) = \frac{\prod_{i=1}^\infty (1 - \alpha_i T)}{\prod_{j=1}^\infty (1 - \beta_j T)} \quad \text{with} \quad \alpha_i, \beta_j \rightarrow 0 \text{ as } i, j \rightarrow \infty.
\]
Recall the maps $\bar \alpha_1$, $\bar \alpha$, and the basis $\c B$ from Section \ref{SS: LofToric}. Let $\f A_1(t)$ and $\f A(t)$ be the matrices of $\bar \alpha_1$ and $\bar \alpha$ with respect to $\c B$. Then
\[
\f A(t) = \f A_1^{\sigma^{a-1}}(t^{p^{a-1}}) \cdots \f A_1^\sigma(t^p) \f A_1(t),
\]
where $\f A_1$ has entries in $L(1/(p-1))$. As in \cite{Wan-Dwork'sconjectureunit-1999}, $\f A(t)$ defines a nuclear $\sigma$-module $\phi$ ordinary at slope zero. If $\phi_0$ is the rank one unit root $\sigma$-module coming from the Hodge-Newton decomposition of $\phi$, then
\[
L(\phi_0^k, \bb A^s / \bb F_q, T) = L_{\text{unit}}(k, \bar G, \bb A^s / \bb F_q, T).
\]
By Theorem \ref{T: unitrankone}, we have

\begin{theorem}
Assume $\Gamma \subset \bb R^s_{\geq 0}$. Then for every $i$ and $j$, $ord_q(\alpha_i)$ and $ord_q(\beta_j) \geq w(\Gamma)$.
\end{theorem}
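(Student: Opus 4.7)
The plan is to apply Theorem~\ref{T: unitrankone} to the $k$-th tensor power $\sigma$-module $\phi^{\otimes k}$, whose unique unit root piece is $\phi_0^k$. The identity $L(\phi_0^k, \bb A^s/\bb F_q, T) = L_{\text{unit}}(k, \bar G, \bb A^s/\bb F_q, T)$ is already recorded just before the theorem statement, so the task reduces to producing the correct matrix data for $\phi^{\otimes k}$ and then invoking Theorem~\ref{T: unitrankone} at the slope $j = 0$.

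First I would construct $\phi^{\otimes k}$ in a form matching the hypotheses of Theorem~\ref{T: unitrankone}. The Frobenius matrix of $\phi$ satisfies the factorization $\f A(t) = \f A_1^{\sigma^{a-1}}(t^{p^{a-1}}) \cdots \f A_1^\sigma(t^p) \f A_1(t)$, and taking tensor powers preserves this structure, yielding an analogous factorization for $\f A(t)^{\otimes k}$ in terms of $\f A_1^{\otimes k}$. Appealing to Proposition~\ref{P: Relate A and B} with the linear algebra operation $\c L = {\otimes k}$, the entries of the appropriately normalized $\f A_1^{\otimes k}(t)$ lie in $L(1/(p-1))$. Since $\Gamma \subset \bb R^s_{\geq 0}$ by hypothesis, every ingredient required by Theorem~\ref{T: unitrankone} is in place once ordinariness at slope zero is established.

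Next I would verify that the slope-zero piece of the Hodge-Newton decomposition of $\phi^{\otimes k}$ is the rank-one $\sigma$-module $\phi_0^k$. This follows from the rank-one ordinariness of $\phi$ at slope zero: the unique unit root of $\phi$ gives a line in the Hodge-Newton decomposition, and its $k$-th tensor power is again a single line of slope zero in $\phi^{\otimes k}$. Thus $\phi^{\otimes k}$ is ordinary up to and including slope $j = 0$, with the slope-$0$ piece equal to the rank-one $\sigma$-module $\phi_0^k$, fitting exactly into the framework of Theorem~\ref{T: unitrankone}.

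Finally I would apply Theorem~\ref{T: unitrankone} with $b = 1/(p-1)$, so that $b(p-1) = 1$ and the hypothesis $b(p-1) \leq 1$ from Theorem~\ref{T: unitRoot} is satisfied. The conclusion yields $ord_q(\alpha_i), ord_q(\beta_j) \geq b(p-1) w(\Gamma) = w(\Gamma)$ for every reciprocal zero $\alpha_i$ and pole $\beta_j$ of $L(\phi_0^k, \bb A^s / \bb F_q, T) = L_{\text{unit}}(k, \bar G, \bb A^s/\bb F_q, T)$. The only non-routine input is the ordinariness check for $\phi^{\otimes k}$ at slope zero; once that is in hand, everything else is a direct specialization of the earlier machinery developed in Sections~\ref{SS: unit} and~\ref{SS: LofToric}.
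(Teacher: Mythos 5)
Your proposal is correct, and it reaches the conclusion by a genuinely different (though closely related) route from the paper. The paper applies Theorem \ref{T: unitrankone} directly to the $\sigma$-module $\phi$ defined by $\f A(t)$, taking $\phi_0$ to be its rank-one unit root part and relying on the cited proof of Wan's Theorem 8.5 --- which treats arbitrary powers $\phi_0^k$ of a rank-one unit root via the limiting decomposition into $L$-functions of linear algebra operations on $\phi$ --- to deliver the bound for $L(\phi_0^k, \bb A^s/\bb F_q, T)$ in one stroke. You instead pass to $\phi^{\otimes k}$, observe that its slope-zero Hodge--Newton piece is the rank-one module $\phi_0^{\otimes k} \cong \phi_0^k$ (which is right: $h_0(\phi^{\otimes k}) = h_0(\phi)^k = 1$, and fiber-by-fiber the only unit among the products $\pi_{i_1}(\lambda)\cdots\pi_{i_k}(\lambda)$ is $\pi_0(\lambda)^k$, so ordinariness at slope zero is inherited), and then apply Theorem \ref{T: unitrankone} at the first moment of this new module. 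What your route buys is that you never need the $k$-th-power aspect of Wan's Theorem 8.5 --- only the bound for the unit root $L$-function itself of an ordinary nuclear $\sigma$-module; what it costs is the (routine, and correctly discharged via Proposition \ref{P: Relate A and B} with $\c L = \otimes^k$) verification that $\f A_1^{\otimes k}$ again has entries in $L(1/(p-1))$ with the column normalization needed for the factorization (\ref{E: Bfactor}), so that $b(p-1) = 1$ and the bound $b(p-1)w(\Gamma) = w(\Gamma)$ comes out as claimed. Both arguments ultimately funnel through the same machinery, namely Wan's decomposition of the unit root $L$-function into an infinite product of $L$-functions of overconvergent matrices together with part (d) of Theorem \ref{T: MainDworkTheorem}, so the two proofs are of essentially equal depth.
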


%%%-----------------------------------------------------------------------
\bibliographystyle{amsplain}
\bibliography{References}

%%%-----------------------------------------------------------------------
\end{document}